\definecolor{bluecite}{HTML}{0875b7}
\newcommand{\ds}{\displaystyle}
\newtheorem{example}{ Example}[section]
\newtheorem{proposition}{Proposition}[section]
\newtheorem{theorem}{Theorem}[section]
\newtheorem{lemma}{Lemma}[section]
\newtheorem{remark}{Remark}[section]
\numberwithin{equation}{section}
\begin{document}
	
	\author{Alexandru Krist\'aly}
	\address{Institute of Applied Mathematics\\
		\'Obuda University\\
		1034 Budapest, Hungary \& Department of Economics\\
		Babe\c s-Bolyai University\\
		400591 Cluj-Napoca, Romania}
	\email{alex.kristaly@econ.ubbcluj.ro; kristaly.alexandru@nik.uni-obuda.hu}
	
	\author{Andrea Mondino}
	\address{
		Mathematical Institute\\
		University of Oxford\\
		Oxford OX1 3LB\\
		England	}
	\email{andrea.mondino@maths.ox.ac.uk}
	
	\renewcommand{\subjclassname}{%
		\textup{2020} Mathematics Subject Classification}
	\subjclass[]{49Q20, 35J40, 35J35, 35P15, 
		 28A75,  
		28A25, 
		46E36}
	\keywords{Clamped plate, principal frequency, ${\sf RCD}(0,N)$ space, sharpness, rigidity, stability.}
	\thanks{A.\ Krist\'aly is  supported by the
		Excellence Researcher Program \'OE-KP-2-2022 of \'Obuda University, Hungary}
	\thanks{A.\;Mondino acknowledges support from the European Research Council (ERC) under the European Union's Horizon 2020 research and innovation programme, grant agreement No.~802689 ``CURVATURE''
	}

	\title[Clamped plates on ${\sf RCD}(0,N)$ spaces]
	{Principal frequency of clamped plates on ${\sf RCD}(0,N)$ spaces:\\ sharpness, rigidity and stability
	}

	\begin{abstract}
		We study fine properties of the principal frequency of clamped plates in the (possibly  singular) setting of metric measure spaces verifying the ${\sf RCD}(0,N)$ condition, i.e.,  infinitesimally Hilbertian spaces with non-negative Ricci curvature  and dimension bounded above by $N>1$ in the synthetic sense. The initial conjecture  -- an isoperimetric inequality for the principal frequency of clamped plates -- has been formulated in 1877 by Lord Rayleigh in the Euclidean case and solved affirmatively in dimensions 2 and 3 by Ashbaugh and Benguria [\textit{Duke Math.\ J.}, 1995] and Nadirashvili [\textit{Arch.\ Rat.\ Mech.\ Anal.}, 1995]. The  main contribution of the present work is  a new isoperimetric inequality for the principal frequency of clamped plates in ${\sf RCD}(0,N)$ spaces whenever $N$ is close enough to 2 or 3. The inequality contains the so-called \textit{asymptotic volume ratio}, and turns out to be sharp under the subharmonicity of the distance function,  a condition satisfied in  metric measure cones. In addition, rigidity (i.e., equality in the isoperimetric inequality) and stability results are established in terms of the cone structure of the ${\sf RCD}(0,N)$ space as well as the shape of the eigenfunction for the principal frequency, given by means of Bessel functions. These results are new even for  Riemannian manifolds with non-negative Ricci curvature. We discuss examples of both smooth and non-smooth spaces where the results  can be applied.  
	\end{abstract}
	\maketitle
	
	\vspace{0.5cm}
	\section{Introduction}
	
	In 1877, Lord Rayleigh \cite[p.\ 382]{Rayleigh} formulated  an isoperimetric inequality   arising from the theory of sound, by claiming that among domains with a fixed volume, balls should have the lowest principal frequency for  vibrating clamped plates. Given an open bounded domain $\Omega\subset \mathbb R^N$, $N\geq 2,$ and $\Omega^\star\subset \mathbb R^N$ a ball with the same volume as $\Omega$, the aforementioned isoperimetric inequality can be reformulated as  
	\begin{equation}\label{cp-conjecture}
			\Lambda(\Omega)\geq \Lambda(\Omega^\star)= h^4_{\nu}\left(\frac{\omega_N}{{\rm Vol}(\Omega)}\right)^{4/N},
	\end{equation}
with equality if and only if $\Omega$ is isometric to the ball $\Omega^\star$ (up to a set of zero $H^2$-capacity), 
	where the principal frequency (or, fundamental tone) for clamped plates are characterized variationally as  
	\begin{equation}\label{variational-charact--membrane}
		\Lambda(\Omega)=\inf_{u\in W_0^{2,2}(\Omega)\setminus \{0\}}\frac{\displaystyle \int_{\Omega}(\Delta u)^2 {\rm d}x}{\displaystyle \int_{\Omega}u^2 {\rm d}x}.
	\end{equation} 
   Here,  $\nu=\frac{N}{2}-1$, $\omega_N=\pi^{N/2}/\Gamma(1+N/2)$  is the volume of the unit Euclidean ball in $\mathbb R^N$, and $ h_\nu$ is the first positive root of the cross-product of the Bessel function $J_\nu$ and modified Bessel function $I_\nu$
   of first kind, which is their Wronskian  having the explicit form $J_\nu I'_\nu-J'_\nu I_\nu$. 
   In \eqref{variational-charact--membrane},   $W_0^{2,2}(\Omega)$ is the usual Sobolev space over $\Omega,$ i.e., the completion of $C_0^\infty(\Omega)$ with respect to the classical norm of $W^{2,2}(\Omega)$.

	The first notable contribution to Lord Rayleigh's conjecture is due to Szeg\H o \cite{Szego} in the early fifties by proving  \eqref{cp-conjecture} whenever the eigenfunction associated to \eqref{variational-charact--membrane} is of constant sign. However, contrary to the common belief at the time -- based on the one-sign property of eigenfunctions for problems involving second-order operators, like the fixed membrane problem described as the Faber--Krahn inequality --  it turned out that there are examples of eigenfunctions for the clamped plate problem which are sign-changing. Such an example was first reported by Duffin \cite{Duffin} and Coffman, Duffin and Shaffer \cite{CDS} on  a specific annulus (e.g., on  a punctured disk), 	the unexpected phenomenon being explained by the absence of a maximum principle for higher order elliptic operators, as it occurs for the vibrating clamped plate problem. In the early eighties, with the knowledge of the possible occurrence of nodal domains for eigenfunctions in clamped plate problems, a landmark argument has been presented by Talenti \cite{Talenti}. 
	In fact, he decomposed the initial domain $\Omega$ into two parts, corresponding to the positive and negative parts of the first eigenfunction, obtaining  a two-balls minimization problem by using Schwarz symmetrization and the classical isoperimetric inequality for sets in $\mathbb R^n$. In this way,  Talenti proved a weak form of Lord Rayleigh's conjecture, namely, instead of \eqref{cp-conjecture}, he obtained that $
	\Lambda(\Omega)\geq t_N \Lambda(\Omega^\star)$ where $t_N\in [1/2,0.98)$ with $t_N\to 1/2$ as $N\to \infty.$ Building on Talenti's argument, in 1995, Nadirashvili \cite{Nadirashvili} proved \eqref{cp-conjecture} for $N=2$, and in the same time, Ashbaugh and Benguria \cite{A-B-Duke} adapted the argument to prove the conjecture \eqref{cp-conjecture} for $N=2$ and $N=3$, by using peculiar properties of the Bessel functions. In 1996, improving Talenti's estimate in high dimensions, Ashbaugh and Laugesen \cite{A-L-Pisa} provided an asymptotically sharp form of Lord Rayleigh's conjecture, i.e., $
	\Lambda(\Omega)\geq c_N \Lambda(\Omega^\star)$ where $c_N\in [0.89,1)$ with $c_N\to 1$ as $N\to \infty.$ These results are summarized  in the monograph of Gazzola, Grunau and Sweers \cite[Chapter 3]{GGS}. 
 Very recently, in 2024, still improving Talenti's argument,  Leylekian \cite{Leylekian-ARMA} (see also \cite{Leylekian-1})  answered Lord Rayleigh's conjecture in high dimensions requiring  an appropriate   regularity of the domain $\Omega$ as well as a controlled behavior of the critical values for the first eigenfunction in \eqref{variational-charact--membrane}. 
	
	A natural question arose concerning the validity of the analogue of Lord Rayleigh's conjecture on non-euclidean structures. First, in 2016, Chasman and Langford \cite{Chasman-Langford} proved a
	non-sharp isoperimetric inequality for the principal frequencies of clamped plates on Gaussian spaces, where the integral in \eqref{variational-charact--membrane} are defined in terms of the Gaussian measure. In 2020, Krist\'aly \cite{Kristaly-Adv} proved that the principal frequency of  `small' clamped plates in low-dimensional negatively curved Riemannian manifolds (endowed with the canonical measure) is at least as large as the corresponding principal frequency of a geodesic ball of the same volume in the model space of constant curvature (either the hyperbolic or the Euclidean space, depending on the upper bound of the sectional curvature). This result is based on fine properties of the Gaussian hypergeometric functions and on the validity of the Cartan-Hadamard conjecture, which holds in dimensions 2 and 3. Complementing the latter result, in 2022, Krist\'aly \cite{Kristaly-GAFA} proved the analogue of Lord Rayleigh's conjecture 
	on 	Riemannian manifolds with positive Ricci curvature for any clamped plate in 2
	and 3 dimensions, and for sufficiently `large' clamped plates in dimensions beyond
	3, the main tool being the L\'evy--Gromov isoperimetric inequality and fine properties of the Gaussian hypergeometric functions of the type $_2F_1$.

	The aim of the present paper is to gain a deeper insight into the clamped plate  problem in abstract metric measure spaces which could  even contain singularities. Such a geometric framework will be provided by spaces verifying the Riemannian curvature-dimension condition ${\sf RCD}(0,N)$, i.e.,  infinitesimally Hilbertian spaces with non-negative Ricci curvature  and dimension bounded above by $N>1$ in the synthetic sense; for details, see \S \ref{section-2}. These spaces contain not only smooth Riemannian manifolds (possibly, with a non-negative weight function on the volume measure) but also spaces containing singularities arising from Gromov--Hausdorff limits of $N$-dimensional (weighted) Riemannian manifolds with non-negative Ricci curvature, Alexandrov spaces with non-negative curvature, etc.

In order to present our results, let $(X,{\sf d},{\sf m})$ be an ${\sf RCD}(0,N)$ space with $N>1$ not necessarily an integer, and $\Omega\subset X$ be an open bounded domain. The \textit{principal frequency} of the clamped plate on  $\Omega$ is defined as
	\begin{equation}\label{Rayleigh}
		\Lambda_{\sf m}(\Omega)=\inf_{u\in H^{2,2}_0(\Omega)\setminus \{0\}}\frac{\displaystyle\int_\Omega (\Delta u)^2 {\rm d}{\sf m}}{\displaystyle\int_\Omega u^2 {\rm d}{\sf m}},
	\end{equation}
	where $H_0^{2,2}(\Omega)$ is the natural non-smooth counterpart of the usual Sobolev space $W_0^{2,2}(\Omega)$; for details, see \S\ref{section-2}. 
	Let $B_r(x)=\{y\in X: {\sf d}(x,y)<r\}$ be the metric ball with center $x\in X$ and radius $r>0.$ By  Bishop--Gromov monotonicity, valid on ${\sf RCD}(0,N)$ spaces, the \textit{asymptotic volume ratio}
	\begin{equation} \label{eq:defAVR}
	{\sf AVR}_ {\sf m}=\lim_{r\to \infty}\frac{ {\sf m}(B_r(x))}{\omega_Nr^N},
	\end{equation}
	is well-defined, i.e., it is independent of the choice of $x\in X$; hereafter, $\omega_N=\pi^{N/2}/\Gamma(N/2+1)$.  For further use, let $\nu=\frac{N}{2}-1.$
	
	The constant ${\sf AVR}_ {\sf m}$ will play a crucial role in our isoperimetric inequality; indeed, our first main result -- similar  in spirit to \eqref{cp-conjecture}  --  reads as follows:

	\begin{theorem}\label{theorem-main} {\rm (Isoperimetric inequality for clamped plates on ${\sf RCD}(0,N)$ spaces)}
		There exists $\varepsilon_0>0$ with the following property. Let $N\in (2-\varepsilon_0,2+\varepsilon_0)\cup (3-\varepsilon_0,3+\varepsilon_0)$ and let  $(X,{\sf d},{\sf m})$ be an  ${\sf RCD}(0,N)$ metric measure space with   ${\sf AVR}_ {\sf m}>0$.  
		For every open bounded domain 
		 $\Omega\subset X$, it holds that
		\begin{equation}\label{inequality}
			\Lambda_{\sf m}(\Omega)\geq {\sf AVR}_ {\sf m}^\frac{4}{N}\Lambda_0(\Omega^*)\equiv {\sf AVR}_ {\sf m}^\frac{4}{N}h_\nu^4\left(\frac{\omega_N}{{\sf m}(\Omega)}\right)^\frac{4}{N},
		\end{equation}
		where $\Omega^*=[0,R]$ with ${\sf m}(\Omega)=\omega_N R^N$,  and $\Lambda_0(\Omega^*)$ stands for  the principal frequency of $\Omega^*$ in the  $1$-dimensional model space $([0,\infty),{\sf d}_{\rm eu},\sigma_N= N\omega_Nr^{N-1}\mathcal L^1)$. 
			\end{theorem}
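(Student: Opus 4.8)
The plan is to adapt Talenti's symmetrization scheme for the clamped plate to the synthetic setting, replacing the classical Euclidean isoperimetric inequality by the sharp ${\sf RCD}(0,N)$ isoperimetric inequality that carries the asymptotic volume ratio, and then to close the argument via the Bessel-function analysis of Nadirashvili \cite{Nadirashvili} and Ashbaugh--Benguria \cite{A-B-Duke} together with a perturbation in the parameter $N$. First I would show that the infimum in \eqref{Rayleigh} is attained. Since ${\sf RCD}(0,N)$ spaces are globally doubling (by Bishop--Gromov) and support a Poincar\'e inequality, the embedding $H^{1,2}_0(\Omega)\hookrightarrow L^2(\Omega)$ is compact for bounded $\Omega$; combined with the elementary bound $\|\nabla u\|_{L^2}^2\le\|u\|_{L^2}\|\Delta u\|_{L^2}$ valid for $u\in H^{2,2}_0(\Omega)$ (integration by parts) and the weak $L^2$-lower semicontinuity of $u\mapsto\int_\Omega(\Delta u)^2\,{\rm d}{\sf m}$, the direct method produces a minimizer $u\in H^{2,2}_0(\Omega)$ of \eqref{Rayleigh}, which is a weak solution of the biharmonic eigenvalue problem $\Delta^2u=\Lambda_{\sf m}(\Omega)\,u$ on $\Omega$. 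Writing $a:=\Lambda_{\sf m}(\Omega)^{1/2}$ and introducing the Talenti auxiliary functions $v_\pm:=-\Delta u\pm au$, one has that $v_+$ and $v_-$ solve (weakly) the Helmholtz-type equations $-\Delta v_+=av_+$ and $\Delta v_-=av_-$ on $\Omega$, while $u=(v_+-v_-)/(2a)$.

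The core step is an ${\sf RCD}(0,N)$ version of Talenti's comparison. I would decompose $\Omega$ along the nodal set of $u$ into $\Omega^+=\{u>0\}$ and $\Omega^-=\{u<0\}$, let $\rho_\pm\ge0$ be defined by ${\sf AVR}_{\sf m}\,\omega_N\rho_\pm^N={\sf m}(\Omega^\pm)$, so that $\rho_+^N+\rho_-^N={\sf m}(\Omega)/({\sf AVR}_{\sf m}\,\omega_N)$, and transplant $u$ and $v_\pm$ to radial functions on the two model balls $[0,\rho_+]$ and $[0,\rho_-]$ of the one-dimensional cone $([0,\infty),{\sf d}_{\rm eu},\sigma_N)$ via decreasing rearrangement with respect to ${\sf m}$. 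The engine is the coarea formula together with the Cauchy--Schwarz inequality on level sets and the sharp isoperimetric inequality ${\rm Per}(E)\ge N\omega_N^{1/N}{\sf AVR}_{\sf m}^{1/N}{\sf m}(E)^{(N-1)/N}$, which holds on ${\sf RCD}(0,N)$ spaces with positive asymptotic volume ratio; it is exactly the factor ${\sf AVR}_{\sf m}^{1/N}$ here --- together with the two rearrangement steps forced by the biharmonic operator and the scaling $\Lambda\sim({\rm length})^{-4}$ --- that produces the exponent $4/N$ on ${\sf AVR}_{\sf m}$ in \eqref{inequality}. Running Talenti's differential-inequality argument \cite{Talenti} (see also \cite[Chapter 3]{GGS}) then reduces the estimate to a two-ball clamped-plate minimization in the model cone, of the form
\[
\Lambda_{\sf m}(\Omega)\ \ge\ \min\Big\{\,\Gamma_\nu(r_1,r_2)\ :\ r_1,r_2\ge0,\ \ r_1^N+r_2^N=\frac{{\sf m}(\Omega)}{{\sf AVR}_{\sf m}\,\omega_N}\,\Big\},
\]
where $\Gamma_\nu$ is the functional built from $J_\nu$ and $I_\nu$, is scale-covariant of degree $-4$, and satisfies $\Gamma_\nu(r,0)=h_\nu^4 r^{-4}$. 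Since the rearrangement targets the model cone rather than $X$ itself, the extremal configuration will be a metric measure cone, consistent with the sharpness statements announced above.

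It then remains to prove that the two-ball minimum above equals the one-ball value $\Gamma_\nu(({\sf m}(\Omega)/({\sf AVR}_{\sf m}\,\omega_N))^{1/N},0)=h_\nu^4({\sf AVR}_{\sf m}\,\omega_N/{\sf m}(\Omega))^{4/N}={\sf AVR}_{\sf m}^{4/N}\Lambda_0(\Omega^*)$, i.e.\ that the single-ball configuration $r_2=0$ is a minimizer. For $\nu=0$ ($N=2$) this is Nadirashvili's theorem \cite{Nadirashvili}, and for $\nu\in\{0,1/2\}$ ($N\in\{2,3\}$) it is the Ashbaugh--Benguria theorem \cite{A-B-Duke}, both of which establish $\Gamma_\nu(r_1,r_2)>\Gamma_\nu(r,0)$ off the single-ball configuration with quantitative slack. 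Since $J_\nu$ and $I_\nu$ --- hence $h_\nu$ and the entire functional $\Gamma_\nu$ --- depend real-analytically on $\nu\ge0$, uniformly on compact subsets of the $(r_1,r_2)$-domain, the strict minimality of the single-ball configuration is an open condition in $\nu$; it therefore persists for $\nu$ in a neighbourhood of $\{0,1/2\}$, i.e.\ for $N\in(2-\varepsilon_0,2+\varepsilon_0)\cup(3-\varepsilon_0,3+\varepsilon_0)$, and this is what fixes $\varepsilon_0$. Combined with the previous display, this yields \eqref{inequality}.

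I expect the main obstacle to be the core step: making Talenti's symmetrization rigorous in the non-smooth framework --- controlling the Sobolev regularity of $u$ and of the auxiliary functions $v_\pm$, handling the nodal decomposition and the possibly irregular boundaries of $\Omega^\pm$, justifying the coarea and Cauchy--Schwarz differential inequalities and the monotonicity/sign information on super-level sets, and matching the rearranged radial profiles with the correct model ODEs --- together with, on the analytic side, checking carefully that the Nadirashvili / Ashbaugh--Benguria Bessel estimates are genuinely stable under perturbation of $\nu$ near $0$ and $1/2$.
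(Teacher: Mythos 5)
Your proposal is essentially the paper's proof: existence of a minimizer by compact embedding, nodal decomposition, the Talenti/Ashbaugh--Benguria symmetrization scheme with the sharp ${\sf AVR}$-weighted isoperimetric inequality producing the factor ${\sf AVR}_{\sf m}^{4/N}$, reduction to a coupled two-ball problem on the one-dimensional model space, and a perturbation in $N$ around $2$ and $3$ exploiting the quantitative slack in the Ashbaugh--Benguria Bessel estimates. Two cautions on the details you flag as obstacles: the ``openness in $\nu$'' must be run on the auxiliary Bessel inequalities (the paper's $\alpha(\nu)+\beta(\nu)<0$ together with the pole condition $2^{1/N}j_{\nu,1}>h_\nu$), not on the eigenvalue gap itself, which degenerates at the single-ball configuration; and the regularity issues are resolved in the paper by proving local Lipschitz continuity of the eigenfunction (which uses $N<4$) and by discarding the set $\{u=0\}$ from the level-set analysis, rather than via the auxiliary functions $v_\pm=-\Delta u\pm au$, which play no role in the paper's argument.
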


		The proof of Theorem \ref{theorem-main} is inspired by  the one of Ashbaugh and Benguria \cite{A-B-Duke}; however, several crucial differences appear that we explain in the sequel by outlining the argument. Let $u\in H_0^{2,2}(\Omega)$ be an eigenfunction to \eqref{Rayleigh}, whose existence is guaranteed by standard embedding arguments (see Proposition \ref{prop-inf-min}). Since we do not know a priori anything about the sign of $u$, we consider the subsets $\Omega_+=\{u>0\}$,  $\Omega_-=\{u<0\}$ and $\Omega_0=\{u=0\}$ of $\Omega.$ Due to the lack of higher regularity of $u$ -- in fact, we only know that $u$ is locally Lipschitz, whose proof requires the dimensional restriction $N<4$ (see Proposition \ref{prop-lipschitz}), -- we shall restrict our analysis to the set $\tilde \Omega=\Omega_-\cup \Omega_+$\footnote{We thank Professor Mark Ashbaugh for suggesting  this idea, which circumvents the regularity of the eigenfunction used in the smooth setting.};
		this observation is in concordance incidentally  with  the fact that the unique continuation of harmonic functions on ${\sf RCD}(0,N)$ spaces generally fails whenever $N\geq 4$, see Deng and Zhao \cite{Deng-Zhao}. By using a suitable symmetrization on $\Omega_-$ and $\Omega_+$, we reduce our variational problem to a coupled  ``two-balls problem"  on the 1-dimensional model space $([0,\infty),{\sf d}_{\rm eu},\sigma_N=N\omega_Nr^{N-1}\mathcal L^1)$. To perform such a strategy, we explore three non-trivial features of ${\sf RCD}(0,N)$ spaces, namely: (a) the co-area formula (see Miranda  \cite{Miranda});  (b) a particular form of the Gauss--Green formula (see Proposition \ref{prop-div-them} based on Bru\'e, Pasqualetto and Semola \cite{BPS-ASNS}); and (c) the sharp isoperimetric inequality which contains the constant ${\sf AVR}_ {\sf m}$  (see Balogh and Krist\'aly \cite{BK}). 
				
		 In contrast to \cite{A-B-Duke} -- where the Euclidean space $\mathbb R^N$ has been considered for $N\in \{2,3\}$ -- we are interested in a wider range of not necessarily integers $N>1$  where the aforementioned ``two-balls problem" can be handled. Due to the lack of regularity, it is not likely that the approach of Leylekian \cite{Leylekian-ARMA} works for large values of $N$. In fact, numerical computations show that one could expect the validity of \eqref{inequality} for every $N\in (1,N_0)$ with $N_0\approx 3.30417$, but serious technical difficulties arise -- based on  peculiar properties of Bessel functions --   which make challenging to obtain an analytical proof. However, the arguments of Ashbaugh and Benguria \cite{A-B-Duke} combined with fine estimates, involving quantities associated to Bessel functions, show  the validity of \eqref{inequality} in  close neighborhoods of the integer dimensions $2$ and $3$, as  claimed in  Theorem \ref{theorem-main}. 
		 
		 Inequality \eqref{inequality} was already established by Krist\'aly \cite[Theorem 1.4]{Kristaly-GAFA} in the smooth setting of $2$- and $3$-dimensional Riemannian manifolds with non-negative Ricci curvature, endowed with the canonical measure. Note however that in the setting of Theorem \ref{theorem-main}, the space is endowed with a generic measure that verifies the (non-smooth) curvature-dimension condition ${\sf RCD}(0,N)$, significantly broadening  the spectrum of applications.     
		 
		Having inequality \eqref{inequality}, the natural questions of sharpness and rigidity arise;  we have the following affirmative answer, which  is new even in the smooth setting:

		\begin{theorem}\label{theorem-main-sharpness-rigidity} {\rm (Sharpness \& Rigidity)}
		Under  the assumptions of Theorem \ref{theorem-main}, the following properties hold. 
		\begin{itemize} 
			\item[(i)] \emph{Sharpness.}   The inequality \eqref{inequality} is sharp  in the sense that for every $N\in (2-\varepsilon_0,2+\varepsilon_0)\cup (3-\varepsilon_0,3+\varepsilon_0)$, $v>0$ and  $\alpha>0$ there exists an ${\sf RCD}(0,N)$ space $(X, {\sf d}, {\sf m})$ with ${\sf AVR}_ {\sf m}=\alpha$ and a domain $\Omega\subset X$ with ${\sf m}(\Omega)=v$ achieving the equality in  \eqref{inequality}.  More precisely,  one can construct such a $(X, {\sf d}, {\sf m})$ as a suitable ${\sf RCD}(0,N)$ metric measure cone, and $\Omega\subset X$ as a metric ball centred at the tip of such a cone. Moreover, if $(X, {\sf d}, {\sf m})$ is an ${\sf RCD}(0,N)$ space where $\rho={\sf d}(x_0,\cdot)$ is subharmonic on $X$ for some $x_0\in \Omega$, i.e., $\Delta \rho \geq 0$, then the constant $ {\sf AVR}_ {\sf m}^{4/N}$ in \eqref{inequality} is sharp.   
			\item[(ii)]   \emph{Rigidity.} 
  If  equality holds in \eqref{inequality} for some open bounded set $\Omega\subset X$, then $(X, {\sf d}, {\sf m})$ is isomorphic as metric measure space to an $N$-Euclidean metric measure cone over an ${\sf RCD}(N-2,N-1)$ space. Moreover, $\Omega$ is isometric to the metric ball $B_{{\sf AVR}_ {\sf m}^{-{1}/{N}}R}(\overline x)$, where ${\sf m}(\Omega)=\omega_N R^N$ and  the point  $\overline x$ is  a tip for the cone structure of $X$; furthermore, up to a multiplicative factor, the eigenfunction realizing equality in \eqref{inequality} has the form 		
 \begin{equation*}
 u(x)=u^*\left({\sf AVR}_ {\sf m}^{\frac{1}{N}}{\sf d}(\overline x,x)\right),\quad \text{for all }  x\in B_{{\sf AVR}_ {\sf m}^{-{1}/{N}}R}(\overline x)\simeq \Omega,
 \end{equation*}
			where 
	\begin{equation}\label{eq:defu*Intro}
 u^*(s)=s^{-\nu}\left(I_\nu(h_\nu) J_\nu\left(h_\nu\frac{s}{R} \right)-J_\nu(h_\nu)  I_\nu\left(h_\nu\frac{s}{R} \right)\right), \ s\in [0,R].
    \end{equation}
		\end{itemize}
		\end{theorem}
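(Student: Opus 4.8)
\medskip

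\noindent\textbf{Plan of proof of Theorem~\ref{theorem-main-sharpness-rigidity}.}

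\emph{Part (i): sharpness.} The plan is to exhibit the extremal configurations explicitly on ${\sf RCD}(0,N)$ metric measure cones, on which every step of the proof of Theorem~\ref{theorem-main} --- the sharp isoperimetric inequality of \cite{BK} applied to superlevel sets, and the coupled one-dimensional comparison --- becomes an equality. Fix $N$ in the admissible range and $\alpha,v>0$. First I would pick an ${\sf RCD}(N-2,N-1)$ space $(Y,{\sf d}_Y,{\sf m}_Y)$ with ${\rm diam}(Y)\le\pi$ --- for instance the one-dimensional spherical model link $([0,\vartheta],{\sf d}_{\rm eu},(\sin t)^{N-2}\mathcal L^1)$ with a suitable $\vartheta\in(0,\pi]$, or a rescaled round $(N-1)$-sphere when $N$ is an integer --- and, after multiplying ${\sf m}_Y$ by a positive constant (which preserves ${\sf RCD}(N-2,N-1)$), arrange ${\sf m}_Y(Y)=N\omega_N\alpha$. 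By the cone characterization of ${\sf RCD}$ spaces, $X:=C(Y)$ is ${\sf RCD}(0,N)$; with $\overline x$ its tip and $\rho={\sf d}(\overline x,\cdot)$, one has ${\sf m}(B_r(\overline x))=\frac{{\sf m}_Y(Y)}{N}r^N$, hence ${\sf AVR}_{\sf m}=\frac{{\sf m}_Y(Y)}{N\omega_N}=\alpha$. Taking $R':=(v/(\alpha\omega_N))^{1/N}$ and $\Omega:=B_{R'}(\overline x)$ gives ${\sf m}(\Omega)=\alpha\omega_N(R')^N=v$, so the right-hand side of \eqref{inequality} equals $\alpha^{4/N}h_\nu^4/(R')^4$.

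\emph{Part (i), continued.} It remains to show $\Lambda_{\sf m}(\Omega)$ equals this value; by Theorem~\ref{theorem-main} only the bound ``$\le$'' is needed. Here I would invoke the cone calculus --- the warped-product description of ${\sf RCD}$ cones and the Gauss--Green formula of Proposition~\ref{prop-div-them} --- to the effect that for a radial function $f(\rho)$ one has $\Delta(f(\rho))=f''(\rho)+\frac{N-1}{\rho}f'(\rho)$, the weighted Laplacian of the model $([0,\infty),{\sf d}_{\rm eu},\sigma_N)$, while $\int_{B_{R'}(\overline x)}g(\rho)\,{\rm d}{\sf m}={\sf m}_Y(Y)\int_0^{R'}g(r)\,r^{N-1}\,{\rm d}r=\alpha\int_{[0,R']}g\,{\rm d}\sigma_N$. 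Hence the Rayleigh quotient in \eqref{Rayleigh}, restricted to radial competitors on $B_{R'}(\overline x)$, equals (the factor ${\sf m}_Y(Y)$ cancelling) that on $([0,R'],{\sf d}_{\rm eu},\sigma_N)$, whose infimum is the principal frequency $h_\nu^4/(R')^4$, attained at the Bessel profile $u^*$ of \eqref{eq:defu*Intro} with $R=R'$ --- here $h_\nu$ being the first positive zero of $J_\nu I_\nu'-J_\nu'I_\nu$ is precisely what forces $u^*$ and $(u^*)'$ to vanish at $R'$. Thus $\Lambda_{\sf m}(\Omega)\le h_\nu^4/(R')^4={\sf AVR}_{\sf m}^{4/N}\Lambda_0(\Omega^*)$, and Theorem~\ref{theorem-main} gives equality, with minimizer $x\mapsto u^*({\sf d}(\overline x,x))$; its admissibility in $H_0^{2,2}(\Omega)$ is routine because $s\mapsto s^{-\nu}J_\nu(cs)$ and $s\mapsto s^{-\nu}I_\nu(cs)$ extend to functions real-analytic in $s^2$, so $u^*(\rho)$ is smooth and bounded up to the tip, and near $\partial B_{R'}(\overline x)$, where $u^*$ vanishes to first order, a cutoff-and-mollification argument supplies the approximating sequence. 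For the last assertion, note that on $C(Y)$ one has $\Delta\rho=\frac{N-1}{\rho}>0$, so $\rho$ is subharmonic; and on a general ${\sf RCD}(0,N)$ space on which $\rho={\sf d}(x_0,\cdot)$ is subharmonic, combining $\Delta\rho\ge 0$ with the Laplacian comparison $\Delta\rho\le\frac{N-1}{\rho}$, testing \eqref{Rayleigh} on $B_r(x_0)$ with the rescaled Bessel profile, and using Bishop--Gromov to compare ${\sf m}(B_r(x_0))$ with $\alpha\omega_Nr^N$ (plus the sign and monotonicity of $u^*$), one obtains $r^4\Lambda_{\sf m}(B_r(x_0))\to h_\nu^4$ as $r\to\infty$, so the coefficient ${\sf AVR}_{\sf m}^{4/N}$ in \eqref{inequality} cannot be enlarged.

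\emph{Part (ii): rigidity.} Suppose equality holds in \eqref{inequality} for some bounded $\Omega$, and let $u\in H_0^{2,2}(\Omega)$ be an associated eigenfunction (Proposition~\ref{prop-inf-min}), locally Lipschitz on $\tilde\Omega=\Omega_+\cup\Omega_-$ by Proposition~\ref{prop-lipschitz}. The plan is to run the proof of Theorem~\ref{theorem-main} in reverse. Equality there forces equality in the sharp isoperimetric inequality of \cite{BK} for almost every superlevel set $\{u>t\}$ or $\{-u>t\}$ of positive finite measure; since ${\sf AVR}_{\sf m}>0$, the rigidity part of \cite{BK} then yields that $(X,{\sf d},{\sf m})$ is isomorphic, as a metric measure space, to an $N$-Euclidean metric measure cone over an ${\sf RCD}(N-2,N-1)$ space, with each such superlevel set a metric ball; by nestedness these balls are concentric, centred at a common tip $\overline x$, and the coupled ``two-balls'' step --- where in the smooth case one invokes fine Bessel-function inequalities of \cite{A-B-Duke} --- forces one of $\Omega_+,\Omega_-$ to be empty, i.e.\ $u$ to be single-signed, say positive. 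Then $\tilde\Omega=\{u>0\}=B_{r_0}(\overline x)$, and $\Omega$ coincides with $B_{r_0}(\overline x)$ outside $\Omega_0=\{u=0\}$, which is ${\sf m}$-null of zero $H^{2,2}$-capacity; so $\Omega$ is isometric to $B_{r_0}(\overline x)$, and ${\sf m}(\Omega)={\sf AVR}_{\sf m}\,\omega_Nr_0^N=\omega_NR^N$ gives $r_0={\sf AVR}_{\sf m}^{-1/N}R$. Finally, equality also forces $u$ to coincide with its symmetric decreasing rearrangement about $\overline x$; transported to the model through the volume-preserving correspondence $B_t(\overline x)\leftrightarrow[0,{\sf AVR}_{\sf m}^{1/N}t]$ (legitimate since $\sigma_N([0,{\sf AVR}_{\sf m}^{1/N}t])=\omega_N({\sf AVR}_{\sf m}^{1/N}t)^N={\sf AVR}_{\sf m}\,\omega_Nt^N={\sf m}(B_t(\overline x))$), this rearrangement is the one-dimensional model clamped-plate ground state on $[0,R]$, i.e.\ $u^*$ of \eqref{eq:defu*Intro} up to a multiplicative constant; undoing the rearrangement yields $u(x)=c\,u^*({\sf AVR}_{\sf m}^{1/N}{\sf d}(\overline x,x))$ on $B_{{\sf AVR}_{\sf m}^{-1/N}R}(\overline x)\simeq\Omega$, as claimed.

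\emph{The main obstacle.} The crux is making the backward propagation of equality rigorous in this low-regularity setting: one must show that equality in \eqref{inequality} produces genuine --- not merely asymptotic --- equality in the isoperimetric inequality of \cite{BK} on honest superlevel sets of positive finite measure, so that its rigidity both identifies the cone structure and locates the common tip; that the coupled two-balls comparison degenerates so as to force a single sign for $u$, which in the smooth case rests on delicate Bessel-function estimates of \cite{A-B-Duke} and must be re-examined here; and that $u$ coincides with its rearrangement even though $u$ is only locally Lipschitz and $\Omega_0$ is controlled merely up to zero measure and capacity. A secondary but essential technical ingredient, used throughout both parts, is the rigorous cone calculus identity $\Delta(f(\rho))=f''(\rho)+\frac{N-1}{\rho}f'(\rho)$ for radial functions on ${\sf RCD}$ cones, which relies on the warped-product structure of such cones and on Proposition~\ref{prop-div-them}.
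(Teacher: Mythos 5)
Your plan follows the paper's strategy closely in most respects. Part (i) is argued exactly as in the paper: attainment of equality via the explicit Bessel profile $u^*({\sf AVR}_{\sf m}^{1/N}{\sf d}(\overline x,\cdot))$ on a metric ball centred at the tip of an ${\sf RCD}(0,N)$ cone with prescribed ${\sf AVR}_{\sf m}=\alpha$ (the paper uses the same cone calculus $\Delta\rho=\frac{N-1}{\rho}$ and ${\sf m}(B_s(\overline x))={\sf AVR}_{\sf m}\,\omega_N s^N$), and, for the sharpness of the constant under subharmonicity, the same asymptotic test-function argument on large balls $B_L(x_0)$ combining $0\leq\Delta\rho\leq\frac{N-1}{\rho}$ with the coarea formula and the definition of ${\sf AVR}_{\sf m}$, letting $L\to\infty$. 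Part (ii) also starts identically: equality forces ${\sf m}(\{u=0\})=0$; the strict inequality $h_\nu(a)>h_\nu$ for $0<a\leq b$ (already established in the proof of Theorem \ref{theorem-main} for the admissible range of $N$, so nothing further needs to be ``re-examined'') rules out a genuinely sign-changing $u$; and equality in the isoperimetric inequality on the superlevel sets, upgraded from a.e.\ $t$ to every $t$ by lower semicontinuity of the perimeter, feeds into the rigidity of \cite{APPV} to identify $X$ as an $N$-Euclidean cone and $\Omega=\{u>0\}$ as a ball of radius ${\sf AVR}_{\sf m}^{-1/N}R$ centred at a tip.

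The one genuine divergence is how the explicit form of the eigenfunction is derived, and here your route has a gap. You propose to show that $u$ coincides with its radial rearrangement by tracking equality through the Hardy--Littlewood/Cauchy--Schwarz/isoperimetric chain; for this you need \emph{all} superlevel sets $\{u>t\}$ to be balls centred at a \emph{common} tip, and your justification ``by nestedness these balls are concentric'' does not hold automatically: a cone may have many tips ($\mathbb R^N$ is the extreme case, where every point is a tip), and nested metric balls need not be concentric. The paper circumvents this with a cleaner device: it first proves that the eigenvalue is \emph{simple} --- given two eigenfunctions $u_1,u_2$, Step 1 forces each to be single-signed, so a combination $u_1-cu_2$ with $\int_\Omega(u_1-cu_2)\,{\rm d}{\sf m}=0$ cannot be a nontrivial eigenfunction (it would again be single-signed), hence vanishes --- and then observes that the Bessel profile is already known from the sharpness construction to be an eigenfunction on $B_{{\sf AVR}_{\sf m}^{-1/N}R}(\overline x)$, so every extremizer is a multiple of it. If you wish to keep the rearrangement route, you must supply the missing concentricity (or, alternatively, extract from the saturated chain the identity ${\sf AVR}_{\sf m}^{2/N}u_+^*=V_+$ and identify $V_+$ as the one-dimensional minimizer); otherwise the simplicity argument is the more economical fix.
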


	Being in the setting of ${\sf RCD}(0,N)$ spaces, we always have the Laplace comparison  $\Delta \rho \leq \frac{N-1}{\rho}$ on $X\setminus \{x_0\}$, see  Gigli \cite{Gigli1}. 
	In Theorem \ref{theorem-main-sharpness-rigidity}, the additional technical assumption with respect to Theorem \ref{theorem-main}  is the subharmonicity of $\rho$, which is needed in a limiting estimate. Note that we can replace this assumption by a weaker one, requiring only that  $\Delta \rho \geq \frac{1-N}{\rho}$ on $X\setminus \{x_0\}$; however, we prefer the former, more familiar version, noticing as well that our examples verify the  subharmonicity of $\rho$. We also remark that such an extra assumption for proving sharpness in higher order functional inequalities is not so surprising; indeed, the assumption $\Delta \rho \geq \frac{N-5}{\rho}$ on $M\setminus \{x_0\}$, called as the ``distance Laplacian
	growth condition", was crucial in the sharpness of  the classical Sobolev inequality  involving the bi-Laplace operator on $N$-dimensional Riemannian manifolds $(M,g)$ with non-negative Ricci curvature, $N\geq 5$, see Barbosa and Krist\'aly \cite{Barbosa-Kristaly}. Moreover, the subharmonicity of the distance is satisfied on cones: indeed,   on an ${\sf RCD}(0,N)$ metric measure cone with tip at $x_{0}$, it holds that  $\Delta \rho = \frac{N-1}{\rho}\geq 0$.
		
	 Since  the equality case in \eqref{inequality} is characterized, it is also natural to ask about stability; in this respect, we establish the following result:

		\begin{theorem}\label{theorem-main-stability} {\rm (Stability)}
		Let $\varepsilon_{0}>0$ be as in Theorem \ref{theorem-main}, fix $N\in (2-\varepsilon_{0}, 2+ \varepsilon_{0}) \cup (3-\varepsilon_{0}, 3+ \varepsilon_{0})$ and $\alpha,v_0, V>0$. For every $\eta>0$ there exists $\delta>0$ depending on $\eta, N, \alpha, v_0, V$ with the following properties.
  Let  $(X, {\sf d}_{X}, {\sf m}_{X}, \overline x)$ be a pointed  ${\sf RCD}(0,N)$ space with ${\sf AVR}_ {{\sf m}_{X}}=\alpha$ and ${\sf m}(B_{1}(\overline x))\leq v_0$, and let  $B_r(\overline x)$ be a metric ball with ${\sf m}(B_r(\overline x))=V$.
  \begin{enumerate}
\item[(i)] \emph{Stability for the shape of an almost optimal space.}		If 
	\begin{equation}\label{eq:ASSMdelta}
		\Lambda_{\sf m}(B_r(\overline x)) \leq \alpha^\frac{4}{N} h_\nu^4 \left(\frac{\omega_N}{V} \right)^\frac{4}{N} + \delta, 
		\end{equation}
		then there exists an $N$-Euclidean metric measure cone $(Y, {\sf d}_{Y}, {\sf m}_{Y})$   over an ${\sf RCD}(N-2,N-1)$ space, with tip  $y_{0}\in Y$,  such that 
		\begin{equation}\label{eq:Thesiseta}
		 {\sf d}_{\rm pmGH}\left( (X, {\sf d}_{X}, {\sf m}_{X}, \overline x), (Y, {\sf d}_{Y}, {\sf m}_{Y}, y_{0})  \right)\leq \eta,
		\end{equation}
		where  ${\sf d}_{\rm pmGH}$ denotes the pointed measured Gromov--Hausdorff distance between pointed metric measure spaces. Moreover, setting $R= (V / \omega_N)^{1/N}$, it holds that
 \begin{equation}\label{eq:r-Rleqeta}
  |r-\alpha^{-{1}/{N}}R|\leq \eta.
  \end{equation}

\item[(ii)] \emph{Stability for the shape of an almost optimal function.}	  If $u\in H^{2,2}_0(B_r(\overline x))\setminus\{0\}$ is almost optimal, i.e.,
  \begin{equation}
  	\frac{\displaystyle\int_{B_r(\overline x)} (\Delta u)^2 \, {\rm d}{\sf m}}{\displaystyle\int_{B_r(\overline x)} u^2\, {\rm d}{\sf m}} \leq \alpha^\frac{4}{N} h_\nu^4 \left(\frac{\omega_N}{V} \right)^\frac{4}{N} +  \delta, 
  \end{equation}
  then
  \begin{equation}
 \frac{\|u- c\, \bar u\|_{H^{1,2}(X, {\sf d}_X, {\sf m}_X)}} {\|u\|_{L^2(X, {\sf m}_X)}} \leq \eta, 
  \end{equation}
  where $u$ is extended to the value $0$ outside of $B_{r}(\overline x)$, $c\in \mathbb{R}$ is a multiplicative constant depending on $u$, and $\bar u$ is defined by 
  \begin{equation*}
\bar u(x)=u^*\left(\alpha^{\frac{1}{N}}{\sf d}(\overline x,x)\right),\; \text{for all }x\in B_{\alpha^{-{1}/{N}}R}(\overline x), \quad \bar u(x)=0 \quad  \text{otherwise},
 \end{equation*}
 $u^*$ being given in \eqref{eq:defu*Intro} and  $R= (V / \omega_N)^{1/N}$.
 \end{enumerate}
	\end{theorem}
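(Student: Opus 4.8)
The plan is to prove both parts of Theorem~\ref{theorem-main-stability} by a single compactness--contradiction scheme, using the rigidity statement of Theorem~\ref{theorem-main-sharpness-rigidity}(ii) to identify the limit object and the sharp inequality of Theorem~\ref{theorem-main} as the matching lower bound. Suppose the conclusion fails for some $\eta>0$. In either case we obtain pointed ${\sf RCD}(0,N)$ spaces $(X_j,{\sf d}_j,{\sf m}_j,\overline x_j)$ with ${\sf AVR}_{{\sf m}_j}=\alpha$, ${\sf m}_j(B_1(\overline x_j))\le v_0$, balls $B_{r_j}(\overline x_j)$ with ${\sf m}_j(B_{r_j}(\overline x_j))=V$, and normalized functions $u_j\in H^{2,2}_0(B_{r_j}(\overline x_j))$ with $\|u_j\|_{L^2({\sf m}_j)}=1$ and $\int(\Delta u_j)^2\,{\rm d}{\sf m}_j\le L+\tfrac1j$, where $L:=\alpha^{4/N}h_\nu^4(\omega_N/V)^{4/N}$; in part~(i) we take $u_j$ to be a minimizer of \eqref{Rayleigh} on $B_{r_j}(\overline x_j)$ (which exists by the referenced embedding argument), so that $\int(\Delta u_j)^2\,{\rm d}{\sf m}_j=\Lambda_{{\sf m}_j}(B_{r_j}(\overline x_j))\le L+\tfrac1j$ by \eqref{eq:ASSMdelta}, while in part~(ii) $u_j$ is the given almost-optimal function. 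By Bishop--Gromov monotonicity together with ${\sf AVR}_{{\sf m}_j}=\alpha>0$ one has $\alpha\omega_N\le {\sf m}_j(B_1(\overline x_j))\le v_0$ and ${\sf m}_j(B_\rho(\overline x_j))\le v_0\rho^N$ for $\rho\ge1$, so the measures neither degenerate nor blow up; hence, by the compactness of ${\sf RCD}(0,N)$ spaces, a subsequence converges in the pointed measured Gromov--Hausdorff topology to a pointed ${\sf RCD}(0,N)$ space $(X_\infty,{\sf d}_\infty,{\sf m}_\infty,\overline x_\infty)$. Using again Bishop--Gromov and the fact that the volume ratios ${\sf m}_j(B_\rho(\overline x_j))/(\omega_N\rho^N)$ converge monotonically and locally uniformly, one checks that ${\sf AVR}_{{\sf m}_\infty}=\alpha$; and that the radii $r_j$ stay in a compact subset of $(0,\infty)$ (they are bounded above since $V={\sf m}_j(B_{r_j}(\overline x_j))\ge\alpha\omega_N r_j^N$, and the uniform bounds prevent them from collapsing), so $r_j\to r_\infty\in(0,\infty)$ and ${\sf m}_\infty(B_{r_\infty}(\overline x_\infty))=V$, because spheres are ${\sf m}$-null in ${\sf RCD}$ spaces.

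The analytic core is the passage to the limit for the clamped-plate energy. The functions $u_j$ (extended by $0$) are supported in a fixed ball and satisfy $\int|\nabla u_j|^2\,{\rm d}{\sf m}_j=-\int u_j\,\Delta u_j\,{\rm d}{\sf m}_j\le\|u_j\|_{L^2}\|\Delta u_j\|_{L^2}\le\sqrt{L+1}$, so by the uniform local doubling and Poincaré inequalities available on ${\sf RCD}(0,N)$ spaces, along a further subsequence $u_j\to u_\infty$ strongly in $L^2$, weakly in $H^{1,2}$ and (weakly) in the second-order sense, with $\|u_\infty\|_{L^2({\sf m}_\infty)}=1$ and $u_\infty\in H^{2,2}_0(B_{r_\infty}(\overline x_\infty))$ (membership being preserved since the supports of $u_j$ lie in $B_{r_j}(\overline x_j)$ and $r_j\to r_\infty$). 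By the closedness of the Laplacian along this convergence together with weak lower semicontinuity of the $L^2$-norm, $\int(\Delta u_\infty)^2\,{\rm d}{\sf m}_\infty\le\liminf_j\int(\Delta u_j)^2\,{\rm d}{\sf m}_j\le L$, hence $\Lambda_{{\sf m}_\infty}(B_{r_\infty}(\overline x_\infty))\le \int(\Delta u_\infty)^2\,{\rm d}{\sf m}_\infty/\|u_\infty\|_{L^2}^2\le L$; on the other hand, Theorem~\ref{theorem-main} applied to $X_\infty$ gives $\Lambda_{{\sf m}_\infty}(B_{r_\infty}(\overline x_\infty))\ge {\sf AVR}_{{\sf m}_\infty}^{4/N}h_\nu^4(\omega_N/{\sf m}_\infty(B_{r_\infty}(\overline x_\infty)))^{4/N}=L$. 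Therefore equality holds in \eqref{inequality} for $B_{r_\infty}(\overline x_\infty)$ and $u_\infty$ is an extremal function. By Theorem~\ref{theorem-main-sharpness-rigidity}(ii), $(X_\infty,{\sf d}_\infty,{\sf m}_\infty)$ is an $N$-Euclidean metric measure cone over an ${\sf RCD}(N-2,N-1)$ space, $r_\infty=\alpha^{-1/N}R$ with $R=(V/\omega_N)^{1/N}$, and — after composing with the isometry furnished by the rigidity — $\overline x_\infty$ is a tip and $u_\infty=c_\infty\,u^*(\alpha^{1/N}{\sf d}_\infty(\overline x_\infty,\cdot))$ for some $c_\infty\in\mathbb R$, with $u^*$ as in \eqref{eq:defu*Intro}. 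This already proves~(i): taking $(Y,{\sf d}_Y,{\sf m}_Y,y_0):=(X_\infty,{\sf d}_\infty,{\sf m}_\infty,\overline x_\infty)$, the convergence $X_j\to X_\infty$ contradicts ${\sf d}_{\rm pmGH}(X_j,Y)>\eta$, and $r_j\to r_\infty=\alpha^{-1/N}R$ contradicts $|r_j-\alpha^{-1/N}R|>\eta$; thus the desired $\delta=\delta(\eta,N,\alpha,v_0,V)$ exists.

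For part~(ii) we upgrade the convergence of $u_j$. Since $\Delta u_j\rightharpoonup\Delta u_\infty$ weakly in $L^2$ and $u_j\to u_\infty$ strongly in $L^2$, we get $\int|\nabla u_j|^2\,{\rm d}{\sf m}_j=-\int u_j\,\Delta u_j\,{\rm d}{\sf m}_j\to-\int u_\infty\,\Delta u_\infty\,{\rm d}{\sf m}_\infty=\int|\nabla u_\infty|^2\,{\rm d}{\sf m}_\infty$, which together with the weak $H^{1,2}$ convergence yields $u_j\to u_\infty$ strongly in $H^{1,2}$. The comparison functions $\bar u_j(x)=u^*(\alpha^{1/N}{\sf d}_j(\overline x_j,x))$ on $B_{\alpha^{-1/N}R}(\overline x_j)$ (and $0$ otherwise) also converge strongly in $H^{1,2}$ to $u^*(\alpha^{1/N}{\sf d}_\infty(\overline x_\infty,\cdot))$ extended by $0$: indeed $u^*$ and $(u^*)'$ are bounded on $[0,R]$, $|\nabla\bar u_j|=\alpha^{1/N}|(u^*)'|(\alpha^{1/N}{\sf d}_j(\overline x_j,\cdot))$ ${\sf m}_j$-a.e.\ (as $|\nabla{\sf d}_j(\overline x_j,\cdot)|=1$ a.e.), and the distance functions to the basepoints as well as the measures of concentric balls pass to the limit. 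Consequently $\|u_j-c_\infty\bar u_j\|_{H^{1,2}(X_j)}\to\|u_\infty-c_\infty u^*(\alpha^{1/N}{\sf d}_\infty(\overline x_\infty,\cdot))\|_{H^{1,2}(X_\infty)}=0$, which (with $\|u_j\|_{L^2}=1$ and the choice $c=c_\infty$, admissible since the negated statement fails for every $c\in\mathbb R$) contradicts $\|u_j-c\,\bar u_j\|_{H^{1,2}(X_j)}>\eta$. This completes the scheme.

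The step requiring the most care is the stability of the second-order spectral quantity $\int(\Delta u)^2$ along pointed measured Gromov--Hausdorff convergence — specifically the lower-semicontinuity (``$\liminf$'') inequality and the accompanying $L^2$-precompactness of normalized quasi-minimizers — since this is a \emph{second-order} energy on $H^{2,2}_0$ of a \emph{varying} sequence of balls: one must combine the convergence theory of the second-order Sobolev calculus on ${\sf RCD}(0,N)$ spaces (closedness of the Laplacian along the convergence) with a careful treatment of the Dirichlet support constraint and of the fact that the relevant radii move; closely related, and also delicate, is checking that ${\sf AVR}$ and the measures of the balls pass to the limit so that Theorem~\ref{theorem-main-sharpness-rigidity}(ii) may be invoked. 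The remaining ingredients — ${\sf RCD}$ compactness, Bishop--Gromov, and the rigidity theorem itself — are then used essentially off the shelf.
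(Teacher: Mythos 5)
Your proposal is correct and follows essentially the same compactness--contradiction scheme as the paper: Gromov precompactness and ${\sf RCD}$ stability, convergence of the normalized quasi-minimizers on varying spaces (the paper cites Ambrosio--Honda and Nobili--Violo for exactly the second-order convergence and the convergence of the model functions $\bar u_j$ that you describe), lower semicontinuity of $\int(\Delta u)^2$, and the rigidity Theorem \ref{theorem-main-sharpness-rigidity}(ii) to identify the limit as a cone. The one imprecision is your claim that ${\sf AVR}_{{\sf m}_\infty}=\alpha$ follows directly from local volume convergence: pmGH convergence only controls bounded regions and hence only yields the upper semicontinuity ${\sf AVR}_{{\sf m}_\infty}\geq \limsup_j {\sf AVR}_{{\sf m}_j}=\alpha$ (the paper's Lemma \ref{lem:USC-AVR}), but this is precisely the inequality your chain of estimates actually uses, so the argument is unaffected and equality follows a posteriori.
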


Theorem \ref{theorem-main-stability} is proved by contradiction. Using  Gromov pre-compactness theorem, the stability of the ${\sf RCD}(0,N)$ condition under pmGH convergence, the upper semi-continuity of the asymptotic volume ratio under pmGH convergence (see Lemma \ref{lem:USC-AVR}) as well as a version of Rellich--Kondrachov theorem in varying spaces proved by Ambrosio and Honda \cite{Ambrosio-Honda}, we show that if there exists a sequence $X_j$ contradicting the statement of Theorem \ref{theorem-main-stability} then, up to subsequences, it pmGH-converges to a space $Y$ which satisfies the assumptions of the rigidity Theorem \ref{theorem-main-sharpness-rigidity}/(ii). It follows that $Y$ is a cone, yielding a contradiction.
The proof of the second claim about the shape of an almost optimal function is analogous, after using a result by Nobili and Violo \cite{NV-Adv} about convergence of functions defined on varying spaces.

	\section{Preliminaries}
	\label{section-2}

	\subsection{Sobolev spaces on ${\sf RCD}(0,N)$ spaces}
	Let $(X,{\sf d}, {\sf m})$ be a metric measure space and $\Omega\subseteq X$ be an open  set. For further use, let $\mathsf{LIP}(\Omega)$  (resp.  $\mathsf{LIP_c}(\Omega)$, and $\mathsf{LIP}_{\rm loc}(\Omega)$) be the space of real-valued Lipschitz  (resp.\  compactly  supported Lipschitz,  and  locally Lipschitz) functions over $\Omega$. In addition, we use the standard notations $L^p(X,{\sf m})$ for $L^p$-spaces. Given $u\in \mathsf{LIP}_{\rm loc}(X)$, its slope at $x\in X$ is defined by  $$|Du|(x)=\limsup_{y\to x}\frac{|u(y)-u(x)|}{{\sf d}(x,y)}.$$

	 The $2$-\textit{Cheeger energy} ${\sf Ch}:L^2(X,{\sf m})\to [0,\infty]$ is defined as the convex and lower semicontinuous functional 
	\begin{equation}\label{Cheeger-def}
		{\sf Ch}(u)=\inf\left\{\liminf_{k\to \infty} \int_X |D u_k|^2 {\rm d} {\sf m}:u_k\in {\sf LIP}(X)\cap L^2(X,{\sf m}),\ u_k\to u\ {\rm in}\ L^2(X,{\sf m})  \right\},
	\end{equation}
	see  Cheeger \cite{Cheeger} and Ambrosio,  Gigli and Savar\'e \cite{AGS}.
	Then  $$H^{ 1,2}(X,{\sf d},{\sf m})=\{u\in L^2(X,{\sf m}):{\sf Ch}(u)<\infty\}$$ is  the $L^2$-Sobolev space over $(X,{\sf d}, {\sf m})$, endowed with the norm 
	$\|u\|_{H^{ 1,2}}=\left(\|u\|^2_{L^2(X,{\sf m})}+{\sf Ch}(u)\right)^{1/2}. $
	One can define the minimal $ {\sf m}$-a.e.\ object $|\nabla u | \in  L^2(X, {\sf m})$, the so-called  \textit{minimal $2$-weak upper gradient of} $u \in H^{ 1,2}(X,{\sf d},{\sf m}),$ such that 
	$${\sf Ch}(u)= \int_X |\nabla u|^2\, {\rm d} {\sf m},$$
	and one has \begin{equation}\label{Sobolev-second-definition}
		H^{ 1,2}(X,{\sf d},{\sf m})=\overline{{\sf LIP}_c(X)}^{\|\cdot\|_{H^{ 1,2}}}.
	\end{equation}
	In general,  $H^{ 1,2}(X,{\sf d},{\sf m})$ is a reflexive Banach space. 
	
		Let	$P_2(X,{\sf d})$ be the
	$L^2$-Wasserstein space of probability measures on $X$, while
	$P_2(X,{\sf d}, {\sf m})$ is  the subspace of
	$ {\sf m}$-absolutely continuous measures on $X$.
	For  $N> 1,$ let 
	${\rm Ent}_N(\cdot| {\sf m}):P_2(X,{\sf d})\to \mathbb R$ be  
	the {\it R\'enyi entropy functional} with
	respect to the measure $ {\sf m}$, defined by 
	\begin{equation}\label{entropy}
		{\rm Ent}_N(\nu| {\sf m})=-\int_X \rho^{-\frac{1}{N}}{\rm d}\nu=-\int_X \rho^{1-\frac{1}{N}}{\rm d} {\sf m},
	\end{equation}
	where $\rho$ denotes the density function of $\nu^{\rm ac}$ in
	$\nu=\nu^{\rm ac}+\nu^{\rm s}=\rho  {\sf m}+\nu^{\rm s}$, while $\nu^{\rm ac}$ and $\nu^{\rm s}$
	represent the absolutely continuous and singular parts of $\nu\in
	P_2(X,{\sf d}),$ respectively.
	The \textit{curvature-dimension condition} ${\sf CD}(0,N)$
	states that for all $N'\geq N$ the functional ${\rm Ent}_{N'}(\cdot|\,{\sf m})$ is
	convex on the $L^2$-Wasserstein space $P_2(X, {\sf d}, {\sf m})$, see Lott and Villani \cite{LV} and Sturm \cite{Sturm-2}.

	 If $(X,{\sf d}, {\sf m})$ is a metric measure space satisfying the ${\sf CD}(0,N)$ condition for some $N>1$ and $H^{ 1,2}(X,{\sf d},{\sf m})$
	is a Hilbert space (equivalently, ${\sf Ch}$ from \eqref{Cheeger-def} is a  quadratic form), we say that $(X,{\sf d},{\sf m})$  verifies the \textit{Riemannian curvature-dimension
		condition} ${\sf RCD}(0,N)$, see Ambrosio, Gigli and Savar\'e \cite{AGS2} and Gigli \cite{Gigli1} (see also Ambrosio,  Gigli,  Mondino and Rajala \cite{AGMR}).  In such case, the operation
	$$\nabla u_1 \cdot \nabla u_2=\lim_{\varepsilon\to 0}\frac{|\nabla u_1 +\varepsilon \nabla u_2|^2-|\nabla u_1|^2}{2\varepsilon}$$
	provides a symmetric bilinear form on $H^{ 1,2}(X,{\sf d},{\sf m})\times H^{ 1,2}(X,{\sf d},{\sf m})$.
	
	The \textit{Laplacian operator} $\Delta:L^2(X,{\sf m})\supset \mathcal D(\Delta) \to L^2(X,{\sf m})$ is defined as follows. Let $\mathcal D(\Delta)$ be the set of functions $u\in H^{ 1,2}(X,{\sf d},{\sf m})$ such that for some $h\in L^2(X,{\sf m})$ one has 
	$$\int \nabla u \cdot \nabla g \, {\rm d}{\sf m}=-\int    hg \, {\rm d}{\sf m},\ \ \forall g\in H^{ 1,2}(X,{\sf d},{\sf m});$$ in this case, we set $h=\Delta u.$ In the setting of  ${\sf RCD}(0,N)$ spaces (thus ${\sf Ch}$ is a  quadratic form), it turns out that $u\mapsto \Delta u$ is linear. 
	
 The \textit{perimeter} of the Borel set $\Omega\subset X$ is defined as 
	$${\rm Per}(\Omega)=\inf\left\{\liminf_{k\to \infty} \int_X |D u_k| {\rm d} {\sf m}:u_k\in \mathsf{LIP}_{\rm loc}(X),\ u_k\to\chi_\Omega\ {\rm in}\ L^1_{\rm loc}(X) \right\}.$$
 
	Given an ${\sf RCD}(0,N)$ space $(X,{\sf d},{\sf m})$ and $\Omega\subset X$ an open domain, for every Borel function $f:\Omega\to \mathbb R$ and nonnegative $v\in {\sf LIP}(\Omega)$, the following form of the \textit{co-area formula} holds: 
	\begin{equation}\label{co-area-formlula}
		\int_{\{s\leq v<r\}}f|Dv|\,{\rm d}{\sf m}= \int_s^r\left(\int f {\rm d}\,{\rm Per}(\{v>t\})\right){\rm d}t,\ \ \forall s\in [0,r],
	\end{equation} see Miranda \cite[Proposition 4.2 \& Remark 4.3]{Miranda} (see also Mondino and Semola \cite[Corollary 2.13]{MSemola}). By Gigli \cite[Theorem 5.3]{Gigli1}, if $x_0\in X$ is fixed and $\rho(x)={\sf d}(x_0,x)$,  we have the eikonal equation
\begin{equation}\label{eikonal}
	|\nabla \rho|=1\ \ {\sf m}{\rm -a.e.}
\end{equation}
Let $B_r(x)=\{y\in X: {\sf d}(x,y)<r\}$ be  the open metric ball in $X$  with center $x\in X$ and radius $r>0.$  If $f:[0,r]\to \mathbb R$ is an integrable function and $v:=\rho={\sf d}(x_0,\cdot)$, by using \eqref{eikonal}, the relation  \eqref{co-area-formlula} reduces to
 	\begin{equation}\label{area-formlula}
 	\int_{B_r(x_0)}f\circ \rho\,{\rm d}{\sf m}= \int_0^r f(t)\, {\sf m}^+(B_t(x_0))\,  {\rm d}t,\ \ \forall r\geq 0,
 \end{equation}
where  ${\sf m}^+$ is the Minkowski content, i.e., for every open bounded $\Omega\subset X$, 
$${\sf m}^+(\Omega)=\liminf_{\epsilon\to 0}\frac{{\sf m}(\Omega_\epsilon\setminus\Omega)}{\epsilon},$$
the set  $\Omega_\epsilon=\{x\in X:\inf_{y\in \Omega}{\sf d}(x,y)<\epsilon\}$ being the $\epsilon$-neighborhood of $\Omega$, $\epsilon>0.$  Indeed, by combining Lebesgue theorem,  the co-area formula \eqref{co-area-formlula} and the eikonal equation \eqref{eikonal}, it follows that
\begin{equation}\label{eq:m+=PerBall}
{\sf m}^+(B_r(x_0))=\liminf_{\epsilon\to 0}\frac{1}{\epsilon} \int_r^{r+\varepsilon} {\rm d}\,{\rm Per}(\{\rho>t\})\, {\rm d}t= {\rm Per}(B_r(x_0)), \quad \text{for $\mathcal L^1$-a.e.}\ r>0.
\end{equation}

	According to Bruè,  Pasqualetto and Semola \cite{BPS}, the \textit{Gauss--Green formula} on an  ${\sf RCD}(0,N)$ space $(X,{\sf d},{\sf m})$ reads as follows: if $E\subset X$ is a bounded set of finite perimeter, then there exists a
	unique vector field $\nu_E\in L^2_E(TX)$ with $|\nu_E|=1$ Per$(E)$-a.e.\ such that 
	\begin{equation}\label{Gauss-Green}
		\int_E {\rm div}(v){\rm d}{\sf m}=-\int \langle {\rm tr}_E(v), \nu_E \rangle {\rm d}{\rm Per}(E),
	\end{equation}
	for every $v\in L^2(TX)\cap D({\rm div})$ with $|v|\in L^\infty(X,{\sf m}),$ where ${\rm tr}_E$ stands for the
	trace operator over the boundary of $E$.

	Let $(X,{\sf d}, {\sf m})$ be a metric measure space verifying  the ${\sf RCD}(0,N)$ condition and $\Omega\subset X$ be an open bounded set. Denote by ${\rm Dom}(\Delta)= \{ u\in L^2(X,{\sf m}) \,: \, \Delta u \in L^2(X, {\sf m}) \}$ the finiteness domain of the Laplacian, i.e., the space of $L^2$ functions whose distributional Laplacian belongs to $L^2(X, {\sf m})$.  Consider the space of test functions
	$$\mathsf{Test}_c(\Omega):=\{ u\in {\sf LIP}(\Omega) : u\in {\rm Dom}(\Delta), \; {\rm supp\,} u \subset \Omega \}.$$
	According to Ambrosio,  Mondino and Savaré \cite[Lemma 6.7]{AMS}, the space $\mathsf{Test}_c(\Omega)$ contains a large class of nonzero functions. 
	By combining the Poincar\'e inequality (see Rajala \cite{Rajala}) with an integration by parts and H\"older inequality, one can find a constant $C>0$, depending only on $N$ and ${\sf m}(\Omega)$,   such that for  every   $u\in  \mathsf{Test}_c(\Omega)$,  
	\begin{equation}\label{eq:unablauL2}
		\int_\Omega  u^2 {\rm d}{\sf m}+\int_\Omega  |\nabla u|^2 {\rm d}{\sf m}
		\leq C \int_\Omega  (\Delta u)^2 {\rm d}{\sf m}. 
	\end{equation}
	Moreover, every $u\in  \mathsf{Test}_c(\Omega)$ admits an $L^2$-Hessian and since  $(X,{\sf d}, {\sf m})$ verifies the ${\sf RCD}(0,N)$ condition, a Bochner-type estimate implies that 
	\begin{equation}\label{eq:HessuL1}
		\int_\Omega |\textrm{Hess}\, u|_{\rm HS}^2 {\rm d}{\sf m} 	\leq \int_\Omega  (\Delta u)^2 {\rm d}{\sf m},
	\end{equation}
	where $|\cdot|_{\rm HS}$ stands for the Hilbert--Schmidt pointwise norm,  see Gigli \cite[Corollary 2.10]{Gigli2}. 	Combining \eqref{eq:unablauL2} and \eqref{eq:HessuL1} we  obtain  for every $u\in  \mathsf{Test}_c(\Omega)$ that
	\begin{equation}\label{eq:HessuL2}
		\|u\|_{W^{2,2}(\Omega,{\sf m})}^2:=	\int_\Omega  u^2 {\rm d}{\sf m}+\int_\Omega  |\nabla u|^2 {\rm d}{\sf m}+\int_\Omega |\textrm{Hess}\, u|^2_{\rm HS} {\rm d}{\sf m} 	\leq (C+1) \int_\Omega  (\Delta u)^2 {\rm d}{\sf m}.
	\end{equation}
	The latter estimate enables us to define the space of functions $H^{2,2}_0(\Omega,{\sf m})$ to be the closure of $\mathsf{Test}_c(\Omega)$ with respect to	the Hilbertian norm
	$$
	\|u\|_{H^{2,2}_0}:= \sqrt{\int_{\Omega} (\Delta u)^2 {\rm d}{\sf m}}.
	$$
	By this argument,  $H^{2,2}_0(\Omega,{\sf m})$ is a natural non-smooth counterpart of the usual Sobolev space $W_0^{2,2}(\Omega)$ from the smooth setting of Euclidean/Riemannian manifolds, see Hebey \cite[Proposition 3.3]{Hebey}. The following observation is crucial in the study of clamped plates on ${\sf RCD}(0,N)$ spaces. 
	
	\begin{proposition}\label{prop-laplace=00} Let $(X,{\sf d},{\sf m})$ be an    ${\sf RCD}(0,N)$ space and $\Omega\subset X$ be an open bounded domain. Then  
		\begin{equation}\label{eq:intDeltau=0}
			\int_\Omega \Delta u\, {\rm d}{\sf m}=0,\ \ \ \text{for \ all}\ u\in H^{2,2}_0(\Omega, {\sf m}).
		\end{equation}
	\end{proposition}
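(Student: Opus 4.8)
\emph{Proof strategy.} The plan is to verify \eqref{eq:intDeltau=0} first for the dense class $\mathsf{Test}_c(\Omega)$ and then pass to the limit in $H^{2,2}_0(\Omega,{\sf m})$. Throughout I would use two structural facts about ${\sf RCD}(0,N)$ spaces: they are proper (being complete, locally doubling length spaces), so that $\overline\Omega$ is compact and ${\sf m}(\Omega)\le{\sf m}(\overline\Omega)<\infty$; and the minimal weak upper gradient and the Laplacian are local, i.e.\ $|\nabla u|=0$ and $\Delta u=0$ ${\sf m}$-a.e.\ on any open set where $u$ is ($\,{\sf m}$-a.e.) constant. Note also that for $u\in\mathsf{Test}_c(\Omega)$ the set $K:={\rm supp}\,u$ is a compact subset of the open set $\Omega$.

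\emph{Step 1: test functions.} Fix $u\in\mathsf{Test}_c(\Omega)$. Since $K={\rm supp}\,u$ is compact and contained in $\Omega$, I would choose a cut-off $\varphi\in{\sf LIP}_c(X)$ with $0\le\varphi\le1$ and $\varphi\equiv1$ on an open neighbourhood $U\supseteq K$, for instance $\varphi(x)=\min\{1,\max\{0,2-2\,{\sf d}(x,K)/d_0\}\}$ with $0<d_0<{\sf d}(K,X\setminus\Omega)$ (so that in addition ${\rm supp}\,\varphi\subset\Omega$ when $\Omega\neq X$). Being bounded, Lipschitz and compactly supported, $\varphi\in H^{1,2}(X,{\sf d},{\sf m})$ by \eqref{Sobolev-second-definition}, hence it is an admissible test function in the very definition of $\Delta$, which gives
\begin{equation*}
\int_X \nabla u\cdot\nabla\varphi\,{\rm d}{\sf m}=-\int_X (\Delta u)\,\varphi\,{\rm d}{\sf m}.
\end{equation*}
The left-hand side vanishes by locality of the weak upper gradient: $|\nabla u|=0$ ${\sf m}$-a.e.\ on $X\setminus K$ (where $u\equiv0$) while $|\nabla\varphi|=0$ ${\sf m}$-a.e.\ on $U$ (where $\varphi$ is constant), and $U\cup(X\setminus K)=X$. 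Thus $\int_X(\Delta u)\varphi\,{\rm d}{\sf m}=0$. Finally, by locality of the Laplacian on the open set $X\setminus K$ (where $u$ coincides with $0$) we have $\Delta u=0$ ${\sf m}$-a.e.\ on $X\setminus K$; since $\varphi\equiv1$ on $K$ and $K\subset\Omega$, this yields
\begin{equation*}
0=\int_X (\Delta u)\,\varphi\,{\rm d}{\sf m}=\int_K \Delta u\,{\rm d}{\sf m}=\int_\Omega \Delta u\,{\rm d}{\sf m},
\end{equation*}
which is \eqref{eq:intDeltau=0} for $u\in\mathsf{Test}_c(\Omega)$.

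\emph{Step 2: density.} For general $u\in H^{2,2}_0(\Omega,{\sf m})$ I would pick $u_k\in\mathsf{Test}_c(\Omega)$ with $\|u_k-u\|_{H^{2,2}_0}\to0$; since by \eqref{eq:HessuL2} this norm also controls the $W^{2,2}$-norm, the map $v\mapsto\Delta v$ extends to an isometry $H^{2,2}_0(\Omega,{\sf m})\to L^2(\Omega,{\sf m})$, so $\Delta u$ is well defined and $\Delta u_k\to\Delta u$ in $L^2(\Omega,{\sf m})$. As ${\sf m}(\Omega)<\infty$, the constant function $1$ lies in $L^2(\Omega,{\sf m})$ and Cauchy--Schwarz gives
\begin{equation*}
\Big|\int_\Omega \Delta u\,{\rm d}{\sf m}-\int_\Omega \Delta u_k\,{\rm d}{\sf m}\Big|\le {\sf m}(\Omega)^{1/2}\,\|\Delta u-\Delta u_k\|_{L^2(\Omega,{\sf m})}\xrightarrow[k\to\infty]{}0,
\end{equation*}
so $\int_\Omega\Delta u\,{\rm d}{\sf m}=\lim_k\int_\Omega\Delta u_k\,{\rm d}{\sf m}=0$ by Step 1.

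The only genuinely delicate point is the one inside Step 1: one cannot simply insert $g\equiv1$ into the weak formulation of the Laplacian because $1\notin H^{1,2}(X,{\sf d},{\sf m})$ when ${\sf m}(X)=\infty$. Replacing it by a compactly supported Lipschitz cut-off equal to $1$ near ${\rm supp}\,u$ circumvents this, at the cost of invoking properness of ${\sf RCD}(0,N)$ spaces and the locality of both the weak gradient and the Laplacian, all of which are standard in this framework.
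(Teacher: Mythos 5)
Your proof is correct, but its core (Step 1) takes a genuinely different route from the paper's. The paper selects, via the co-area formula applied to the distance function from $X\setminus\Omega$, an open set of finite perimeter $\hat\Omega$ with ${\rm supp}\,u\subset\hat\Omega\subset\Omega$, and then invokes the Gauss--Green formula \eqref{Gauss-Green} of Bru\'e--Pasqualetto--Semola, concluding because the trace of $\nabla u$ vanishes ${\rm Per}(\hat\Omega)$-a.e. You instead test the weak definition of $\Delta u$ against a compactly supported Lipschitz cut-off $\varphi$ equal to $1$ near ${\rm supp}\,u$ (admissible since ${\sf LIP}_c(X)\subset H^{1,2}$ and ${\sf RCD}(0,N)$ spaces are proper), and kill both sides using strong locality of the minimal weak upper gradient ($|\nabla u|=0$ a.e.\ where $u$ vanishes, $|\nabla\varphi|=0$ a.e.\ where $\varphi\equiv 1$, so $\nabla u\cdot\nabla\varphi=0$ a.e.\ by Cauchy--Schwarz) together with the resulting locality of the Laplacian on $X\setminus{\rm supp}\,u$. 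Your route is more elementary: it bypasses sets of finite perimeter, traces and the Gauss--Green machinery entirely, at the modest cost of asserting rather than proving the locality of $\Delta$ (which does follow in one line by testing against $g\in{\sf LIP}_c(X\setminus{\rm supp}\,u)$ and using locality of the gradient, so this is not a gap). Step 2 is essentially identical to the paper's, which deduces $L^1$-convergence of $\Delta u_k$ on the finite-measure set $\Omega$ from $L^2$-convergence, exactly as your Cauchy--Schwarz estimate does.
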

	\begin{proof}
		We first claim that
		\begin{equation}\label{eq:intDeltau0Test}
			\int_\Omega \Delta u\, {\rm d}{\sf m}=0, \quad \text{for all } u\in \mathsf{Test}_c(\Omega). 
		\end{equation}
		In order to prove \eqref{eq:intDeltau0Test}, let us first observe that there exists an open set of finite perimeter $\hat{\Omega}$ such that ${\rm supp\,} u \subset \hat{\Omega} \subset \Omega$. Indeed, from the fact that ${\rm supp\,} u$ is compactly contained in $\Omega$ (which is a consequence of the boundedness of $\Omega$ and the fact that $(X,{\sf d})$ is proper), then it is at positive distance from $X\setminus \Omega$. The distance function from $X\setminus \Omega$ is 1-Lipschitz and thus, by the co-area formula  \eqref{co-area-formlula}, it follows that ${\mathcal L}^1$-a.e.\;superlevel set has finite perimeter. One can then choose $\hat{\Omega}$ to be one of such superlevel sets of the distance from $X\setminus \Omega$ with finite perimeter.
		
		By the Gauss--Green formula \eqref{Gauss-Green}, we have that
		\begin{equation*}
			\int_\Omega \Delta u\, {\rm d}{\sf m}= \int_{\hat{\Omega}}  \Delta u\, {\rm d}{\sf m}= -\int \langle {\rm tr}_{\hat{\Omega}}(\nabla u), \nu_{\hat{\Omega}} \rangle {\rm d}{\rm Per}(\hat{\Omega})=0, 
		\end{equation*}
		where the last identity follows by the fact that $u$ vanishes in a neighbourhood of $\partial \hat{\Omega}$, and thus  $\langle {\rm tr}_{\hat{\Omega}}(\nabla u), \nu_{\hat{\Omega}}\rangle=0$ ${\rm Per}(\hat{\Omega})$-a.e.\  This completes the proof of \eqref{eq:intDeltau0Test}.
		
		Now let us prove \eqref{eq:intDeltau=0}.
		Fix $u\in H^{2,2}_0(\Omega, {\sf m})$. By the very definition of $H^{2,2}_0(\Omega, {\sf m})$, there exists a sequence $(u_k)_k\subset \mathsf{Test}_c(\Omega)$ such that $\Delta u_k\to \Delta u$ in $L^2(\Omega, {\sf m})$. Recalling that $\Omega$ is bounded and thus it has finite measure, we infer that $\Delta u_k\to \Delta u$ in $L^1(\Omega, {\sf m})$ and thus
		$$
		\int_\Omega \Delta u\, {\rm d}{\sf m}=\lim_{k\to \infty} \int_\Omega \Delta u_k\, {\rm d}{\sf m}=0,
		$$
		where, in the last identity, we used \eqref{eq:intDeltau0Test}. 
	\end{proof}

\subsection{Rearrangements.} Let $(X,{\sf d}, {\sf m})$ be a metric measure space verifying the ${\sf RCD}(0,N)$ condition and  $\Omega\subset X$ be an open bounded domain. For a Borel measurable function $u\colon 
	\Omega\to[0,\infty)$ we consider its \textit{distribution function} $\mu:[0,\infty)\to [0,{\sf m}(\Omega)]$ given by 
	$$\mu(t)={\sf m}(\{u>t\}).$$
	Note that $\mu$ is non-increasing and left-continuous. Let $u^\#$ be the \textit{generalized inverse of} $\mu$ given by
	$$u^\#(s)=\begin{cases}
		{\rm ess\,sup}\, u, &\text{ if }s=0,\\
		\inf\{t:\mu(t)<s\}, &\text{ if }s>0.
	\end{cases}$$
	For every Borel  function $u\colon \Omega\to[0,\infty)$ the \textit{monotone rearrangement} ${u^*}\colon[0,r]\to[0,\infty)$ is defined as
	$$u^*(s)=u^\#(\omega_N s^N),\ \ s\in [0,r],$$
	where  $r>0$ is chosen so that ${\sf m}(\Omega)=\omega_N r^N.$ If we consider the 1-dimensional model space $([0,\infty),{\sf d}_{\rm eu},\sigma_N=N\omega_Nr^{N-1}\mathcal L^1)$, one has  that
	 $$\mu(t)=\sigma_N(\{u^*>t\})=\mathcal L^1(\{u^\#>t\}), \quad \text{for every $t>0$}, $$
	where ${\sf d}_{\rm eu}=|\cdot|$ is the usual Euclidean distance on $[0,\infty)$. 
		The layer cake representation gives that 
		\begin{equation}\label{equimeasure}
		\|u\|_{L^p(\Omega,{\sf m})}=\|u^*\|_{L^p(\Omega^*,\sigma_N)},  \quad \text{for every } p\geq 1,
	\end{equation} 
	where $\Omega^*=[0,r]$, with $ {\sf m}(\Omega)=\sigma_N([0,r])=\omega_N r^N.$ 
	If $S\subset \Omega$ is open, then a Hardy--Littlewood-type estimate shows that  
	\begin{equation}\label{Hardy-Littlewood}
		\|u\|_{L^1(S,{\sf m})}\leq \|u^*\|_{L^1(S^*,\sigma_N)},
	\end{equation}
	with equality when $S= \Omega$.

	\subsection{Sharp isoperimetric inequality on ${\sf RCD}(0,N)$ spaces} \label{subsection-2-3}	If $(X,{\sf d}, {\sf m})$ is a metric measure space satisfying the ${\sf CD}(0,N)$ condition for some $N>1$, then the Bishop--Gromov comparison principle states that the functions  
	\begin{equation}\label{Bishop-Gromov-monoton}
		r\mapsto \frac{ {\sf m}(B_r(x))}{r^{N}},\ \ \ r\mapsto \frac{ {\sf m}^+(B_r(x))}{r^{N-1}},\ \ r>0,
	\end{equation}
	are non-increasing on $[0,\infty)$ for every $x\in X$, see Sturm \cite{Sturm-2}.
	An important consequence is that the \textit{asymptotic volume ratio}
	\begin{equation}\label{AVR-difinition}
		{\sf AVR}_ {\sf m}=\lim_{r\to \infty}\frac{ {\sf m}(B_r(x))}{\omega_Nr^N}=\lim_{r\to \infty}\frac{ {\sf m}^+(B_r(x))}{N\omega_Nr^{N-1}},
	\end{equation}
	is well-defined, i.e., it is independent of the choice of $x\in X$. Here,  $\omega_N=\frac{\pi^{\frac{N}{2}}}{\Gamma(\frac{N}{2}+1)}$ is a scaling factor, which coincides with the volume of the $N$-dimensional unit ball in $\mathbb R^N$, whenever $N\in \mathbb N.$

	If ${\sf AVR}_ {\sf m}>0 $, we have the \textit{sharp isoperimetric inequality} on $(X,{\sf d}, {\sf m})$ verifying the curvature-dimension condition ${\sf CD}(0,N)$, cf.\ Balogh and Krist\'aly \cite{BK}; namely,  for  every bounded Borel subset $\Omega\subset X$ it holds  
	\begin{equation}\label{eqn-isoperimetric-2}
		{\rm Per}(\Omega)\geq N\omega_N^\frac{1}{N}{\sf AVR}_ {\sf m}^\frac{1}{N} {\sf m}(\Omega)^\frac{N-1}{N},
	\end{equation}
	and the constant $N\omega_N^\frac{1}{N}{\sf AVR}_ {\sf m}^\frac{1}{N}$ in  \eqref{eqn-isoperimetric-2} is sharp. 
	We notice that the initial form of \eqref{eqn-isoperimetric-2} from \cite{BK} has been proved for the Minkowski content ${\sf m}^+$ instead of the perimeter ${\rm Per}$; in fact,  a  suitable approximation argument also shows the present form, see e.g.\ Ambrosio, Di Marino and Gigli \cite[Theorem 3.6]{ADMG} or Nobili and Violo \cite[Proposition 3.10]{NV}.  
	
	We notice that in the setting of ${\sf RCD}(0,N)$ spaces, the equality case in \eqref{eqn-isoperimetric-2} has been recently characterized, stating that equality holds in \eqref{eqn-isoperimetric-2} for some $\Omega\subset X$ if and only if the following two properties simultaneously  hold: 
	\begin{itemize}
		\item[(i)] $X$ is isometric to an $N$-Euclidean   metric measure cone over an ${\sf RCD}(N-2,N-1)$ space, i.e., there exists a compact ${\sf RCD}(N-2,N-1)$ metric measure space $(Z,{\sf d}_Z,{\sf m}_Z)$   such that  $(X,{\sf d}, {\sf m})$ is isometric to the metric measure cone $(C(Z),{\sf d}_c,t^{N-1}{\rm d}t\otimes {\sf m}_Z)$, where $$C(Z)=Z \times [0,\infty)/(Z \times \{0\})$$ and ${\sf d}_c$ is the usual cone distance from ${\sf d}_Z$.  Recall that an ${\sf RCD}(N-2,N-1)$ space is an infinitesimally Hilbertian metric measure space satisfying the ${\sf CD}(N-2, N-1)$ condition, i.e., having Ricci curvature bounded below by $N-2$ and dimension bounded above by $N-1$, in a synthetic sense. Indeed it was proved by Ketterer \cite{Ketterer} that if an ${\sf RCD}(0,N)$ space $X$ is an $N$-cone on a metric measure space $Y$, then $Y$ is an ${\sf RCD}^*(N-2,N-1)$ space; afterwards, it was proved by Cavalletti and Milman \cite{C-M-Inv} that any ${\sf RCD}^*(N-2,N-1)$ space satisfies the (a priori stronger)  ${\sf RCD}(N-2,N-1)$ condition.   
		\item[(ii)] $\Omega$ is (up to an ${\sf m}$-negligible set) a metric ball centered
		at one of the tips of $X;$ here, the point  $Z \times \{0\}$ is called the tip of the cone.
	\end{itemize}   This characterization has been stated by Antonelli,  Pasqualetto,  Pozzetta and  Violo \cite{APPV}; for previous forms with slightly different assumptions, see Antonelli,  Pasqualetto,   Pozzetta and Semola \cite{APPS} and Cavalletti and  Manini \cite{C-M}.

	\section{Coupled minimization problem: from ${\sf RCD}(0,N)$ to the model space}
	
	\subsection{Fine level-set analysis} In this subsection we establish some important, preparatory results for the level sets of the eigenfunction associated to the variational problem \eqref{Rayleigh}. 
	
	\begin{proposition}\label{prop-inf-min} The infimum in \eqref{Rayleigh} is achieved. 
	\end{proposition}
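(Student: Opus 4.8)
The plan is to run the direct method in the calculus of variations, using the elliptic-type estimates already recorded in \S\ref{section-2}. First I would pick a minimizing sequence $(u_k)_k\subset H^{2,2}_0(\Omega,{\sf m})\setminus\{0\}$ for \eqref{Rayleigh}, normalized so that $\int_\Omega u_k^2\,{\rm d}{\sf m}=1$ and $\int_\Omega (\Delta u_k)^2\,{\rm d}{\sf m}\to \Lambda_{\sf m}(\Omega)$; this makes sense because $\mathsf{Test}_c(\Omega)$ contains nonzero functions (Ambrosio, Mondino and Savaré), so the infimum is taken over a nonempty set, and $0\le \Lambda_{\sf m}(\Omega)<\infty$ (in fact $\Lambda_{\sf m}(\Omega)\ge 1/C$ by \eqref{eq:unablauL2}). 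Since $\|u_k\|^2_{H^{2,2}_0}=\int_\Omega (\Delta u_k)^2\,{\rm d}{\sf m}$ stays bounded, the norm comparison \eqref{eq:HessuL2} shows that $(u_k)_k$ is bounded in $W^{2,2}(\Omega,{\sf m})$; extending each $u_k$ by $0$ outside $\Omega$ — which is legitimate, as elements of $H^{2,2}_0(\Omega,{\sf m})$ are $H^{2,2}_0$-limits, hence $W^{2,2}$-limits, of functions in $\mathsf{Test}_c(\Omega)$ with support compactly contained in $\Omega$ — the sequence is also bounded in $H^{1,2}(X,{\sf d},{\sf m})$, with representatives supported in a fixed bounded set.

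Next, since $H^{2,2}_0(\Omega,{\sf m})$ is a Hilbert space and $H^{1,2}(X,{\sf d},{\sf m})$ is reflexive, after passing to a subsequence we have $u_k\rightharpoonup u$ weakly in $H^{2,2}_0(\Omega,{\sf m})$ for some $u\in H^{2,2}_0(\Omega,{\sf m})$, and by the continuous embedding $H^{2,2}_0(\Omega,{\sf m})\hookrightarrow H^{1,2}(X,{\sf d},{\sf m})$ (again via \eqref{eq:HessuL2}) also $u_k\rightharpoonup u$ weakly in $H^{1,2}(X,{\sf d},{\sf m})$ and weakly in $L^2(\Omega,{\sf m})$, with limits consistent by uniqueness of weak limits. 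The crucial step is to upgrade this to \emph{strong} convergence in $L^2(\Omega,{\sf m})$: since $(X,{\sf d})$ is proper, $\overline{\Omega}$ is compact and ${\sf m}(\Omega)<\infty$, and the local doubling and Poincaré properties valid on ${\sf RCD}(0,N)$ spaces yield the compact embedding $H^{1,2}_0(\Omega,{\sf m})\hookrightarrow\hookrightarrow L^2(\Omega,{\sf m})$ (a Rellich--Kondrachov theorem on a fixed space). Hence, along a further subsequence, $u_k\to u$ strongly in $L^2(\Omega,{\sf m})$, so $\int_\Omega u^2\,{\rm d}{\sf m}=1$ and in particular $u\ne 0$.

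Finally, by weak lower semicontinuity of the squared Hilbert norm $v\mapsto \int_\Omega (\Delta v)^2\,{\rm d}{\sf m}$ on $H^{2,2}_0(\Omega,{\sf m})$,
$$\int_\Omega (\Delta u)^2\,{\rm d}{\sf m}\le \liminf_{k\to\infty}\int_\Omega (\Delta u_k)^2\,{\rm d}{\sf m}=\Lambda_{\sf m}(\Omega).$$
Since $u\in H^{2,2}_0(\Omega,{\sf m})\setminus\{0\}$ with $\int_\Omega u^2\,{\rm d}{\sf m}=1$, the variational characterization \eqref{Rayleigh} gives
$$\Lambda_{\sf m}(\Omega)\le \frac{\int_\Omega (\Delta u)^2\,{\rm d}{\sf m}}{\int_\Omega u^2\,{\rm d}{\sf m}}=\int_\Omega (\Delta u)^2\,{\rm d}{\sf m}\le \Lambda_{\sf m}(\Omega),$$
so all inequalities are equalities and $u$ realizes the infimum.

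The only non-routine ingredient is the compactness step, i.e.\ promoting the weak convergence of the minimizing sequence to strong $L^2$ convergence needed to pass to the limit in the denominator of the Rayleigh quotient; the rest is the standard direct method combined with the estimates \eqref{eq:unablauL2}--\eqref{eq:HessuL2}.
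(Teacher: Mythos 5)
Your proposal is correct and follows essentially the same route as the paper: the direct method, with boundedness in $H^{2,2}_0(\Omega,{\sf m})$ and $H^{1,2}(X,{\sf d},{\sf m})$ coming from \eqref{eq:unablauL2}--\eqref{eq:HessuL2}, the compact embedding into $L^2$ to handle the denominator, and weak lower semicontinuity of the Hilbert norm for the numerator. The only difference is one of attribution — the paper cites Gigli--Mondino--Savar\'e for the compact embedding $H^{1,2}(X,{\sf d},{\sf m})\hookrightarrow\hookrightarrow L^2(X,{\sf m})$ rather than invoking doubling and Poincar\'e directly — which does not change the substance of the argument.
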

	
	\begin{proof}
				By \eqref{eq:unablauL2}, we know that  $H^{2,2}_0(\Omega,{\sf m})$ continuously embeds  into $H^{ 1,2}(X,{\sf d},{\sf m})$ which in turn, compactly embeds into $L^2(X,{\sf m})$, see Gigli,  Mondino and Savaré \cite{GMS}.  
				The existence of a minimizer for \eqref{Rayleigh} follows then by direct methods in calculus of variations, weak compactness and lower semicontintuity of the norm in the Hilbert space $H^{2,2}_0(\Omega,{\sf m})$ to minimize $u\mapsto \|u\|_{H^{2,2}_0}$ under the constraint $\|u\|_{L^2(\Omega,{\sf m})}=1$.
	\end{proof}
	From now on, we denote by $u\in H^{2,2}_0(\Omega,{\sf m})$  the eigenfunction achieving the infimum in \eqref{Rayleigh}.
	
	\begin{proposition}\label{prop-lipschitz}
		If $N\in [1,4)$, then $u$ has a locally Lipschitz representative in $\Omega$. 
	\end{proposition}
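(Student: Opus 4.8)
The plan is to take the first variation of the Rayleigh quotient \eqref{Rayleigh}, obtaining a fourth–order equation for the minimizer $u$, and then to run an elliptic bootstrap in which the hypothesis $N<4$ is used exactly once. By Proposition~\ref{prop-inf-min} the minimizer $u\in H^{2,2}_0(\Omega,{\sf m})$ exists, and differentiating \eqref{Rayleigh} at $u$ yields the Euler--Lagrange equation
\[
\int_\Omega \Delta u\,\Delta\varphi\,{\rm d}{\sf m}=\Lambda_{\sf m}(\Omega)\int_\Omega u\,\varphi\,{\rm d}{\sf m}\qquad\text{for all }\varphi\in H^{2,2}_0(\Omega,{\sf m}).
\]
Passing to the limit in $\int\nabla u_k\cdot\nabla\varphi=-\int(\Delta u_k)\varphi$ along an approximating sequence $u_k\in\mathsf{Test}_c(\Omega)$ with $\Delta u_k\to\Delta u$ in $L^2$, one sees that $u\in{\rm Dom}(\Delta)$ with $f:=\Delta u\in L^2(\Omega,{\sf m})$, and $u\in H^{1,2}(X,{\sf d},{\sf m})$ by \eqref{eq:unablauL2}; thus $u$ is a weak solution of the Poisson equation $\Delta u=f$ on $\Omega$. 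Since an ${\sf RCD}(0,N)$ space is globally doubling with exponent $N$ (Bishop--Gromov) and supports a local Poincaré inequality (Rajala), the De Giorgi--Nash--Moser regularity theory for the Poisson equation applies: a weak solution of $\Delta u=f$ with $f\in L^p_{\rm loc}$ and $p>N/2$ is locally bounded (indeed locally Hölder). As $N<4$ we may take $p=2$, which gives $u\in L^\infty_{\rm loc}(\Omega)$.

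Next one shows that $w:=\Delta u$ is itself locally bounded. Fix a metric ball $B'$ with $\overline{B'}\subset\Omega$ and let $v_0\in H^{1,2}_0(B')$ be the Lax--Milgram solution of $\Delta v_0=\Lambda_{\sf m}(\Omega)u$ on $B'$ (the associated bilinear form is coercive on $H^{1,2}_0(B')$ by the Poincaré inequality, and $\Lambda_{\sf m}(\Omega)u\in L^2(B')$). Testing the Euler--Lagrange equation against $\varphi\in\mathsf{Test}_c(B')\subset H^{2,2}_0(\Omega,{\sf m})$ and integrating by parts in $\int_{B'}v_0\,\Delta\varphi\,{\rm d}{\sf m}=-\int_{B'}\nabla v_0\cdot\nabla\varphi\,{\rm d}{\sf m}=\Lambda_{\sf m}(\Omega)\int_{B'}u\,\varphi\,{\rm d}{\sf m}$, one finds $\int_{B'}(w-v_0)\Delta\varphi\,{\rm d}{\sf m}=0$ for all $\varphi\in\mathsf{Test}_c(B')$, i.e.\ $g:=w-v_0$ is a very weak solution of $\Delta g=0$ on $B'$. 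A Weyl–type lemma on ${\sf RCD}(0,N)$ spaces then yields that $g$ has a harmonic (hence locally Lipschitz) representative on $B'$, so $w=\Delta u\in H^{1,2}_{\rm loc}(\Omega)$ and $\Delta w=\Lambda_{\sf m}(\Omega)u$ weakly on $\Omega$. Since $\Lambda_{\sf m}(\Omega)u\in L^\infty_{\rm loc}(\Omega)$ by the previous step, applying once more the De Giorgi--Nash--Moser estimate (now with any finite exponent, trivially $>N/2$) gives $\Delta u\in L^\infty_{\rm loc}(\Omega)$.

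Finally, $u\in H^{1,2}_{\rm loc}(\Omega)$ solves $\Delta u=f$ with $f=\Delta u\in L^\infty_{\rm loc}(\Omega)$, and the Lipschitz regularity theorem for Poisson equations on metric measure spaces with a lower Ricci bound (Jiang) — obtained from the pointwise control of $|\nabla u|$ by a Riesz potential of $|f|$ plus a local average of $|\nabla u|$, using $f\in L^p_{\rm loc}$ for every $p$ together with the Bakry--\'Emery gradient estimate for the heat semigroup — shows that $u$ admits a locally Lipschitz representative in $\Omega$. (Equivalently, one may invoke Jiang's theorem already in the previous step to get $v_0\in\mathsf{LIP}_{\rm loc}(B')$, whence $\Delta u=v_0+g\in\mathsf{LIP}_{\rm loc}(B')\subset L^\infty_{\rm loc}$, before the final application.)

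The main obstacle is the middle step: the Euler--Lagrange equation only provides a \emph{very weak} formulation of the fourth–order problem, tested against $\Delta\varphi$ with $\varphi\in H^{2,2}_0(\Omega,{\sf m})$, and this must be upgraded to an honest $H^{1,2}_{\rm loc}$–solution of a Poisson equation with a right–hand side one controls; this is precisely where a Weyl–type regularity statement in the ${\sf RCD}$ setting is required. The role of the hypothesis $N<4$ is equally essential and enters only at the very first bootstrap: it is exactly the strict inequality $2>N/2$ that turns the $L^2$ datum $\Delta u$ into a locally bounded $u$, after which the scheme closes; at $N=4$ this step degenerates.
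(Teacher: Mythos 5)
Your proof follows the same skeleton as the paper's: read the minimizer's Euler--Lagrange equation as the system $\Delta v=\Lambda_{\sf m}(\Omega)u$ with $v:=\Delta u$, upgrade $v$ to $L^\infty_{\rm loc}$, and conclude via Jiang's Lipschitz regularity for the Poisson equation with bounded right-hand side; in both arguments the hypothesis $N<4$ is used exactly once, as the condition $2>N/2$. The difference lies in how the bound on $\Delta u$ is reached. The paper does it in one stroke, quoting the $L^p$-to-$L^\infty$ estimate of Mondino--Vedovato (Theorem 5.1), which is invoked precisely to pass from $\Delta v=\Lambda_{\sf m}(\Omega)u\in L^2$ to $v\in L^\infty(\Omega)$ under $N/2<2$; no intermediate bound on $u$ itself is needed. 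You instead first bound $u$ by De Giorgi--Nash--Moser and then try to treat $\Delta v=\Lambda_{\sf m}(\Omega)u$ as an honest weak ($H^{1,2}_{\rm loc}$) equation.

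That second stage is where your argument has a genuine gap. Since $v=\Delta u$ is a priori only in $L^2$, your identity $\int_{B'}(v-v_0)\Delta\varphi\,{\rm d}{\sf m}=0$ is a \emph{very weak} statement (the reduction to it via Lax--Milgram and integration by parts is fine), and the ``Weyl-type lemma on ${\sf RCD}(0,N)$ spaces'' you then invoke to produce a harmonic representative of $v-v_0$ is neither proved nor referenced. This is not an off-the-shelf fact: the classical Weyl lemma rests on local mollification by smooth kernels, which has no direct analogue in a metric measure space (heat-semigroup mollification is global and does not obviously preserve local very-weak harmonicity against the class $\mathsf{Test}_c(B')$), and the harmonicity/regularity results available in the ${\sf RCD}$ literature are stated for $H^{1,2}_{\rm loc}$ solutions. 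You correctly flag this as the main obstacle, but flagging it is not resolving it. To close the argument you should either prove such a hypoellipticity statement or, as the paper does, appeal to an $L^\infty$ estimate formulated directly for the $L^2$-datum, $L^2$-solution setting, which simultaneously renders your first De Giorgi--Nash--Moser step ($u\in L^\infty_{\rm loc}$) unnecessary.
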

	\begin{proof}
		Notice that $u\in H^{2,2}_0(\Omega,{\sf m})$ solves $\Delta^2 u=\Lambda_{\sf m}(\Omega) u$. Calling $v:=\Delta u$, we have that $\Delta v= \Lambda_{\sf m}(\Omega) u\in L^2(\Omega)$ and  we infer that $v\in L^\infty(\Omega)$; note that for this argument, the condition $\frac{N}{2}<2$ was crucial, see Mondino and Vedovato \cite[Theorem 5.1]{MV-CVPDE}. The thesis follows from  the locally Lipschitz  regularity of the solutions to the Poisson equation, see Jiang \cite[Theorem 1.2]{Jiang}, combined with the fact that $\Delta u=v\in L^\infty(\Omega)$.
	\end{proof}
	
	In the rest of the paper, unless otherwise specified, we will always work with the locally Lipschitz representative of $u$. This is well-motivated, as in Theorem \ref{theorem-main} we assume that $N<4$. Since $u$ can have nodal domains, i.e., it could be sign-changing, we consider 
	$$\Omega_+:=\{u>0\}=\{x\in \Omega: u(x)>0\}\ \ {\rm  and}\ \  \Omega_-:=\{u<0\}=\{x\in \Omega: u(x)<0\};$$
	similarly, let 
	$u_+=\max(u,0)$ and $u_-=-\min(u,0)$ be the positive and negative parts of $u$, respectively. Let $\tilde \Omega=\Omega_+\cup \Omega_-$ and $\tilde R>0$ be such that $\omega_N \tilde R^N={\sf m}(\tilde \Omega)$. Finally, let $a,b\geq 0$ be such that 
	\begin{equation}\label{a-b}
		\omega_N a^N={\sf m}(\Omega_+) \ \ {\rm and} \ \  \omega_N b^N={\sf m}(\Omega_-);
	\end{equation}
	thus  $\tilde R^N=a^N+b^N$. It is clear that $\tilde R\leq R$, where $\omega_N R^N={\sf m}(\Omega)$ since we have $\omega_N (R^N-\tilde R^N)={\sf m}(\{u=0\})\geq 0.$  
	From \eqref{Rayleigh}, it follows that
		\begin{equation}\label{Lieb-modified}
		\Lambda_{\sf m}(\Omega)= \frac{\displaystyle\int_\Omega (\Delta u)^2 {\rm d}{\sf m}}{\displaystyle\int_\Omega u^2 {\rm d}{\sf m}}\geq \frac{\displaystyle\int_{\tilde \Omega} (\Delta u)^2 {\rm d}{\sf m}}{\displaystyle\int_{\tilde \Omega} u^2 {\rm d}{\sf m}}.
	\end{equation}
	Therefore, it is enough to restrict the function $u$ to  $\tilde \Omega$; for simplicity, we still keep the same notation $u$ for this restriction. The distribution functions of $u_\pm$ are given by $\mu_\pm(t)={\sf m}(\{u_\pm>t\})$ together with their generalized inverses $u_\pm^\#$ and monotone rearrangements $u_\pm^*;$ in a similar way, we consider $(\Delta u)^\#_\pm$ and $(\Delta u)^*_\pm$, respectively.  For further use, for $t>0$, let $r_t,\rho_t>0$ be such that $\omega_N r_t^N=\mu_+(t)$ and $\omega_N \rho_t^N=\mu_-(t)$, respectively. It is clear from \eqref{a-b} that $r_0:=\lim_{t\to 0}r_t=a$ and  $\rho_0:=\lim_{t\to 0}\rho_t=b.$
	
	\begin{proposition}\label{prop-div-them}
		For $\mathcal L^1$-a.e.\;$t>0$,  it holds that
		\begin{equation}\label{eq:GaussGreen-u}
			\int_{\{u_\pm=t\}}|\nabla u|\, {\rm d}{\rm Per}(\{u_\pm>t\})=-\int_{\{u_\pm>t\}}\Delta u\, {\rm d}{\sf m}.
		\end{equation}
	\end{proposition}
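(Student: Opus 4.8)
The plan is to apply the Gauss--Green formula \eqref{Gauss-Green} on the superlevel sets $\{u_\pm>t\}$ with the vector field $v=\nabla u$, after first checking that these sets are, for $\mathcal L^1$-a.e.\ $t$, bounded sets of finite perimeter to which the formula applies and that the trace of $\nabla u$ on their boundary is controlled by $|\nabla u|$. First I would record that $u$ (restricted to $\tilde\Omega$) is locally Lipschitz by Proposition \ref{prop-lipschitz} and lies in $H^{2,2}_0(\Omega,{\sf m})$, so $\nabla u\in L^2(TX)$ with $|\nabla u|\in L^\infty_{\rm loc}$, and $\operatorname{div}(\nabla u)=\Delta u\in L^2(X,{\sf m})$; hence $\nabla u\in L^2(TX)\cap D(\operatorname{div})$ and, since $\tilde\Omega$ is bounded, the integrability hypotheses needed for \eqref{Gauss-Green} hold on any bounded set of finite perimeter contained in a neighbourhood of $\overline{\tilde\Omega}$ (one may multiply by a Lipschitz cutoff equal to $1$ there, as in the proof of Proposition \ref{prop-laplace=00}, to make $|\nabla u|\in L^\infty(X,{\sf m})$ globally without changing anything on $\tilde\Omega$).

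Next I would invoke the co-area formula \eqref{co-area-formlula} applied to the Lipschitz function $u_\pm$: it shows that for $\mathcal L^1$-a.e.\ $t>0$ the superlevel set $E_t^\pm:=\{u_\pm>t\}$ has finite perimeter, and moreover these sets are bounded because they are contained in the bounded domain $\tilde\Omega$. For such a good value $t$, apply \eqref{Gauss-Green} with $E=E_t^\pm$ and $v=\nabla u$:
\[
\int_{E_t^\pm}\Delta u\,{\rm d}{\sf m}=\int_{E_t^\pm}\operatorname{div}(\nabla u)\,{\rm d}{\sf m}=-\int \langle \operatorname{tr}_{E_t^\pm}(\nabla u),\nu_{E_t^\pm}\rangle\,{\rm d}\operatorname{Per}(E_t^\pm).
\]
It then remains to identify the boundary integrand. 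On $\partial^* E_t^\pm$, which up to $\operatorname{Per}$-negligible sets is contained in the level set $\{u_\pm=t\}=\{u=\pm t\}$, the measure-theoretic outer normal $\nu_{E_t^\pm}$ points in the direction of decrease of $u_\pm$, i.e.\ $\operatorname{tr}_{E_t^\pm}(\nabla u)=-|\nabla u|\,\nu_{E_t^\pm}$ holds $\operatorname{Per}(E_t^\pm)$-a.e., so that $\langle \operatorname{tr}_{E_t^\pm}(\nabla u),\nu_{E_t^\pm}\rangle=-|\nabla u|$ on $\{u_\pm=t\}$. Substituting this into the displayed identity and rearranging yields exactly \eqref{eq:GaussGreen-u}.

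The main obstacle is the last step: rigorously justifying that the trace $\operatorname{tr}_{E_t^\pm}(\nabla u)$ equals $-|\nabla u|\,\nu_{E_t^\pm}$ on the reduced boundary for a.e.\ level $t$. In the smooth setting this is the elementary fact that on a regular level set the gradient is normal to the level set and points toward higher values; in the ${\sf RCD}$ setting one has to argue through the structure theory of sets of finite perimeter and the compatibility of the trace operator of \cite{BPS-ASNS} with the Gauss--Green formula. I expect this to follow by combining the coarea formula with the localization of $\nabla u$ along level sets (the fact that, $\mathcal L^1$-a.e.\ $t$, $\operatorname{Per}(E_t^\pm)$-a.e.\ point is a point of approximate differentiability of $u$ where the tangential part of $\nabla u$ vanishes), together with a standard Gauss--Green argument applied to $\pm(u_\pm-t)_+$ to pin down the sign of the normal component; alternatively one can argue by approximating $\nabla u$ by test vector fields and passing to the limit in \eqref{Gauss-Green}, using \eqref{eq:unablauL2}--\eqref{eq:HessuL2} for the requisite $W^{2,2}$-bounds. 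All other steps are routine given the machinery already assembled in \S\ref{section-2}.
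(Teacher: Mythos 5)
Your strategy coincides with the paper's: apply the Gauss--Green formula \eqref{Gauss-Green} to $v=\nabla u$ on the superlevel sets (which have finite perimeter for $\mathcal L^1$-a.e.\ $t$ by the co-area formula), and then identify the normal trace of $\nabla u$ with $\pm|\nabla u|$. The preliminary reductions are fine. The problem is that you stop exactly where the proof begins. You correctly single out the identification of $\langle {\rm tr}_{\{u_\pm>t\}}(\nabla u),\nu_{\{u_\pm>t\}}\rangle$ with $\pm|\nabla u|$, ${\rm Per}$-a.e.\ on a.e.\ level set, as ``the main obstacle'', but you then replace the argument by a list of candidate strategies introduced with ``I expect this to follow by\dots''. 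In the non-smooth setting there is no off-the-shelf statement that ``the tangential part of $\nabla u$ vanishes ${\rm Per}(\{u_\pm>t\})$-a.e.\ for a.e.\ $t$'' --- establishing this is essentially the whole content of the proposition. The paper does it by a blow-up argument: for $\mathcal L^1$-a.e.\ $t$ and ${\rm Per}(\{u>t\})$-a.e.\ $x$ one has that (1) $x$ is a regular point of $(X,{\sf d},{\sf m})$ \cite{MN-JEMS}; (2) $x$ is a regular reduced-boundary point of $\{u>t\}$ \cite{BPS}; (3) the blow-up of $u$ at $x$ is linear with slope $|\nabla u|(x)$ \cite{Cheeger}; and (4) that linear function must equal $|\nabla u|(x)$ times the signed distance to the blow-up half-space, which is deduced from the stability under blow-up of the one-sided constraint $(u-t)_+\le 0$ on $\{u<t\}$. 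These four facts hold ${\sf m}$-a.e.\ and are transferred to ${\rm Per}$-a.e.\ points of a.e.\ level set via the co-area formula; the trace is then computed in the blow-up limit and the conclusion recovered by Lebesgue differentiation for the (asymptotically doubling) perimeter measure \cite{Ambrosio2002}. Your alternative suggestion --- approximating $\nabla u$ by test vector fields and passing to the limit in \eqref{Gauss-Green} using the $W^{2,2}$-bounds --- does not obviously close the gap either, since $L^2$- or $W^{1,2}$-convergence of the vector field does not control its trace on a fixed level set, so one would again need an a.e.-in-$t$ selection argument of the above type.

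A secondary issue: your signs do not come out right. The minus sign on the right-hand side of \eqref{Gauss-Green} forces $\nu_E$ to be the \emph{interior} normal (the convention $D\chi_E=\nu_E|D\chi_E|$ of \cite{BPS}), so on $\{u=t\}$ one has $\langle {\rm tr}_{\{u>t\}}(\nabla u),\nu_{\{u>t\}}\rangle=+|\nabla u|$; with your choice $\langle {\rm tr}(\nabla u),\nu\rangle=-|\nabla u|$ the substitution yields $\int_{\{u_+>t\}}\Delta u\,{\rm d}{\sf m}=+\int|\nabla u|\,{\rm d}{\rm Per}$, the opposite of \eqref{eq:GaussGreen-u}. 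Relatedly, since the statement involves $\Delta u$ rather than $\Delta u_\pm$, the vector field $\nabla u$ points \emph{into} $\{u_+>t\}$ but \emph{out of} $\{u_->t\}=\{u<-t\}$, so the two cases require separate sign bookkeeping rather than a uniform ``$\pm$''.
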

	
	\begin{proof} 
		The proof follows closely Bru\'e, Pasqualetto and Semola \cite[Proposition 6.1]{BPS-ASNS}, where a similar result was proved for distance functions. We recall the argument for the reader's convenience.
		We prove the claim for $'+'$, the case $'-'$  being similar.
  
		By the Gauss--Green formula \eqref{Gauss-Green}, it holds that
		\begin{equation}\label{eq:GGDeltau}
			-\int_{\{u_+>t\}}\Delta u\, {\rm d}{\sf m}=\int_{\{u=t\}} \langle {\rm tr}_{\{u>t\}}(\nabla u), \nu_{\{u>t\}} \rangle {\rm d}{\rm Per}(\{u>t\}).
		\end{equation}
		We will show that the Radon--Nikodym derivative of $\langle {\rm tr}_{\{u>t\}}(\nabla u), \nu_{\{u>t\}} \rangle {\rm d}{\rm Per}(\{u>t\})$ with respect to ${\rm Per}(\{u>t\})$ coincides with $|\nabla u|$  for $\mathcal L^1$-a.e.\;$t>0$. The claim \eqref{eq:GaussGreen-u} will then follow by Lebesgue theorem, recalling that the perimeter measure is asymptotically doubling, see Ambrosio \cite[Corollary 5.8]{Ambrosio2002}. Therefore, we are reduced to show that, for $\mathcal L^1$-a.e.\;$t>0$, it holds that
		\begin{equation}\label{eq:blowupTrace}
			\lim_{r\to 0^+} \frac{\langle {\rm tr}_{\{u>t\}}(\nabla u), \nu_{\{u>t\}} \rangle {\rm Per}_{\{u>t\}}(B_r(x))} {{\rm Per}(B_r(x))} =  |\nabla u| (x), \quad {\rm Per}(\{u>t\})\text{-a.e.}\;x.
		\end{equation}
		
		We claim that \eqref{eq:blowupTrace} follows by a blow-up argument, thanks to the next four facts that will be proved below; namely, for $\mathcal L^1$-a.e.\;$t>0$ and ${\rm Per}(\{u>t\})\text{-a.e.}\;x\in \{u=t\}$ the following statements hold:
		\begin{enumerate}
			\item $x$ is a regular point for $(X,{\sf d}, {\sf m})$, i.e., $(X,{\sf d}, {\sf m})$ has a unique tangent cone at $x$, and such a tangent cone is isomorphic to the Euclidean space $\mathbb R^n$ for some $1\leq n\leq N$;
			\item $x$ is a regular reduced-boundary point for the finite perimeter set $\{u>t\}$, i.e., any blow-up of $\{u>t\}$ at $x$ in the sense of finite perimeter sets is a half-space in $\mathbb R^n$, see Ambrosio, Bru\'e and Semola \cite[\S3]{ABS-GAFA};
			\item $x$ is a regular point for $u$, i.e., any blow-up of the function $u$ at $x$ is a linear function $u_x:{\mathbb R}^n\to {\mathbb R}$ with slope $|\nabla u|(x) $;
			\item denoting by ${\mathbb H}^n\subset {\mathbb R}^n$ the half-space arising as the blow-up of $\{u>t\}$ at $x$ in item (2),  $u_x$ coincides with the signed distance function from ${\mathbb H}^n$, scaled by the real number $|\nabla u|(x)$, i.e., $u_x= |\nabla u| (x) \, d^{\pm}_{{\mathbb H}^n}$.
		\end{enumerate}
		Before discussing the validity of statements  (1)--(4), let us show that they are sufficient to obtain \eqref{eq:blowupTrace} and thus the thesis. By the Gauss--Green formula \eqref{Gauss-Green} we know that $-\langle {\rm tr}_{\{u>t\}}(\nabla u), \nu_{\{u>t\}} \rangle$ coincides with the divergence of the vector field $\chi_{\{u>t\}} \, \nabla u$, which in turn equals $\Delta (u-t)_+$. Assuming that (1)--(4) hold,  the blow-up of $\{u>t\}$ converges to the half-space ${\mathbb H}^n$, and the function $u$ converges to $|\nabla u|(x) \, d^{\pm}_{{\mathbb H}^n}$. In such a limit, it is immediate to verify that
		$$
		\langle {\rm tr}_{{\mathbb H}^n}(|\nabla u| (x) \, \nabla d^{\pm}_{{\mathbb H}^n}), \nu_{{\mathbb H}^n} \rangle = |\nabla u| (x).
		$$
		Scaling and stability of the distributional Laplacian then yields \eqref{eq:blowupTrace}.
		
		It remains to prove that (1)--(4) hold for $\mathcal L^1$-a.e.\;$t>0$ and ${\rm Per}(\{u>t\})\text{-a.e.}\;x\in \{u=t\}$. By the co-area formula \eqref{co-area-formlula}, it suffices to prove that they are satisfied ${\sf m}$-a.e.
		
		Property (1) was proved by Mondino and Naber \cite[Corollary 1.2]{MN-JEMS}. Property (2) was established by Bru\'e, Pasqualetto and Semola \cite[Theorem 3.2]{BPS}. Property (3) follows by  Cheeger \cite[Theorem 10.2]{Cheeger}. We are left to show the validity of (4). To this aim, observe that for all $x\in \{u=t\}$ one has
		\begin{equation}\label{eq:u-int}
			(u-t)_+(x)\leq 0 \quad \text{and} \quad \int_{\{u<t\}\cap B_r(x)}(u-t)_+ \,{\rm d}{\sf m}=0.
		\end{equation}
		The inequalities in \eqref{eq:u-int} are stable under blow-up of the set $\{u>t\}$ and of the function $u$, due to the $L^1_{\rm loc}$-convergence of the indicator functions of the scaled sets. Therefore,  $(u_x)_+$ also vanishes identically on the complement of the blow-up of $\{u>t\}$ at $x$. Since the blow-up of $\{u>t\}$ is the half-space ${\mathbb H}^n\subset {\mathbb R}^n$, and the blow-up of $u$ at $x$ is a linear function with slope $|\nabla u|(x)$, the only possibility is that (4) holds.
	\end{proof}
	
	\begin{proposition}\label{isoperi-prop}
		For $\mathcal L^1$-a.e.\ $t>0$,  it holds 
		$$
		{\rm Per}^2(\{u_\pm>t\})\leq \mu_\pm'(t)\int_{\{u_\pm>t\}}\Delta u\, {\rm d}{\sf m}.
		$$
	\end{proposition}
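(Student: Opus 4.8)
The plan is to deduce the inequality from three facts available in the excerpt: the co-area formula \eqref{co-area-formlula}, the Gauss--Green identity of Proposition \ref{prop-div-them}, and an elementary Cauchy--Schwarz inequality on the level set $\{u_\pm=t\}$. I treat the ``$+$'' case; the ``$-$'' case is symmetric. First observe that both sides of the asserted inequality are non-negative: $\mu_+$ is non-increasing, so $\mu_+'(t)\le 0$ for $\mathcal L^1$-a.e.\ $t$, while by Proposition \ref{prop-div-them}
\[
-\int_{\{u_+>t\}}\Delta u\,{\rm d}{\sf m}=\int_{\{u_+=t\}}|\nabla u|\,{\rm d}{\rm Per}(\{u_+>t\})\ge 0\qquad\text{for }\mathcal L^1\text{-a.e.\ }t>0 .
\]

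The first step produces a one-sided differential inequality for the distribution function $\mu_+$. Since $\{u_+\ge\delta\}$ is compactly contained in the open set $\Omega_+$ for every $\delta>0$ (because $\Omega$ is bounded and $u$ is continuous), the function $u_+$ is Lipschitz near each such sublevel set, and after a routine truncation (replacing $u_+$ by the globally Lipschitz function $(u_+-\delta)_+$ and letting $\delta\to0$) one may legitimately apply \eqref{co-area-formlula}. Using it with $f=\chi_{\{|\nabla u|=0\}}$ shows that for $\mathcal L^1$-a.e.\ $\tau>0$ the measure ${\rm Per}(\{u_+>\tau\})$ gives no mass to $\{|\nabla u|=0\}$; moreover, since $u$ is continuous, ${\rm Per}(\{u_+>\tau\})$ is concentrated on $\{u_+=\tau\}$ for every $\tau>0$, hence on $\{u_+=\tau\}\cap\{|\nabla u|>0\}$ for a.e.\ $\tau$. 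Using \eqref{co-area-formlula} instead with $f=|\nabla u|^{-1}\chi_{\{|\nabla u|>0\}}$ (so that $f\,|Du_+|=\chi_{\{|\nabla u|>0\}}$), together with the identity $\{t<u_+<s\}=\{u_+>t\}\setminus\{u_+\ge s\}$, gives
\[
\mu_+(t)-\mu_+(s)\ \ge\ \int_{\{t<u_+<s\}}\chi_{\{|\nabla u|>0\}}\,{\rm d}{\sf m}\ =\ \int_t^s g(\tau)\,{\rm d}\tau,\qquad g(\tau):=\int_{\{u_+=\tau\}}|\nabla u|^{-1}\,{\rm d}{\rm Per}(\{u_+>\tau\}),
\]
for all $0\le t<s$, where $g\ge 0$ and $\int_0^\infty g\le {\sf m}(\Omega_+)<\infty$ (again by the co-area formula; the replacement of $\le$ by $<$ in the superlevel set costs nothing since regular level sets are ${\sf m}$-negligible). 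Hence $\Psi:=\mu_++\Phi$, with $\Phi(s):=\int_0^s g$, is non-increasing on $[0,\infty)$; being monotone it is differentiable $\mathcal L^1$-a.e., and since $\Phi'=g$ a.e.\ we conclude
\[
g(t)\ \le\ -\mu_+'(t)\qquad\text{for }\mathcal L^1\text{-a.e.\ }t>0 .
\]

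The second step is to fix $t>0$ in the full-measure set where $\mu_+$ is differentiable, where the bound of Step 1 holds, where ${\rm Per}(\{u_+>t\})$ is concentrated on $\{u_+=t\}\cap\{|\nabla u|>0\}$, and where the identity of Proposition \ref{prop-div-them} holds, and to run Cauchy--Schwarz on the level set with the pair $|\nabla u|^{1/2}$, $|\nabla u|^{-1/2}$:
\[
{\rm Per}(\{u_+>t\})^2=\Big(\int_{\{u_+=t\}}1\,{\rm d}{\rm Per}(\{u_+>t\})\Big)^2\le\Big(\int_{\{u_+=t\}}|\nabla u|\,{\rm d}{\rm Per}(\{u_+>t\})\Big)\Big(\int_{\{u_+=t\}}|\nabla u|^{-1}\,{\rm d}{\rm Per}(\{u_+>t\})\Big).
\]
By Proposition \ref{prop-div-them} the first factor equals $-\int_{\{u_+>t\}}\Delta u\,{\rm d}{\sf m}\ge 0$, and by Step 1 the second factor is at most $-\mu_+'(t)\ge 0$. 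Multiplying and recalling the sign discussion above,
\[
{\rm Per}(\{u_+>t\})^2\ \le\ \Big(-\int_{\{u_+>t\}}\Delta u\,{\rm d}{\sf m}\Big)\big(-\mu_+'(t)\big)\ =\ \mu_+'(t)\int_{\{u_+>t\}}\Delta u\,{\rm d}{\sf m},
\]
which is the desired inequality for $\mathcal L^1$-a.e.\ $t>0$; the argument for $u_-$ is identical.

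I expect the main obstacle to lie entirely in Step 1 in the non-smooth setting: upgrading the integrated co-area identity \eqref{co-area-formlula} to the pointwise bound $g(t)\le-\mu_+'(t)$ without any regularity of $\mu_+$ beyond monotonicity, and checking that $\{|\nabla u|=0\}$ is ${\rm Per}(\{u_+>t\})$-negligible for a.e.\ $t$ (so that $|\nabla u|^{-1}$ is ${\rm Per}(\{u_+>t\})$-a.e.\ finite on the level set and Cauchy--Schwarz is meaningful). A secondary point is the reduction from the merely locally Lipschitz $u$ to the genuinely Lipschitz functions required by \eqref{co-area-formlula}, handled by truncating away from the zero set of $u$ and using that $(X,{\sf d})$ is a length space. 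By contrast, the Cauchy--Schwarz step and the invocation of Proposition \ref{prop-div-them} are routine.
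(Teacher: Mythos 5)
Your proof is correct, but it follows a genuinely different route from the paper's. You run the classical Talenti/Fleming--Rishel scheme: Cauchy--Schwarz \emph{on the level set} $\{u_\pm=t\}$ with respect to the perimeter measure, using the weights $|\nabla u|^{\pm 1/2}$, which forces you to prove separately (your Step 1) the derivative bound $\int_{\{u_\pm=t\}}|\nabla u|^{-1}\,{\rm d}{\rm Per}(\{u_\pm>t\})\leq -\mu_\pm'(t)$ and to check that ${\rm Per}(\{u_\pm>t\})$ does not charge $\{|\nabla u|=0\}$ for a.e.\ $t$. The paper instead applies Cauchy--Schwarz on the \emph{solid shell} $\{t<u\leq t+h\}$ with respect to ${\sf m}$, namely
\begin{equation*}
\left(\frac{1}{h}\int_{\{t<u\leq t+h\}}|\nabla u|\,{\rm d}{\sf m}\right)^2\leq \frac{\mu_+(t)-\mu_+(t+h)}{h}\cdot\frac{1}{h}\int_{\{t<u\leq t+h\}}|\nabla u|^2\,{\rm d}{\sf m},
\end{equation*}
and lets $h\to 0$: the coarea formula together with Lebesgue differentiation sends the left side to ${\rm Per}^2(\{u_+>t\})$ and the last factor to $\int_{\{u=t\}}|\nabla u|\,{\rm d}{\rm Per}(\{u_+>t\})$ for a.e.\ $t$, after which Proposition \ref{prop-div-them} is invoked exactly as you do. The paper's shell argument is shorter and sidesteps entirely the reciprocal weight $|\nabla u|^{-1}$ and the critical set, which are precisely the points you correctly flag as the delicate part of your approach; your version, on the other hand, isolates the bound $g(t)\leq-\mu_+'(t)$ as a statement of independent interest, and your handling of it (via the monotonicity of $\mu_++\Phi$ and a.e.\ differentiation, plus the coarea formula with $f=\chi_{\{|\nabla u|=0\}}$ to dispose of the critical set) is sound, modulo the standard identification of the slope with the minimal weak upper gradient for Lipschitz functions on PI spaces, which the paper also uses implicitly.
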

	\begin{proof} The claim is known in the Euclidean setting, see  Talenti \cite[Appendix, p.\ 278]{Talenti}. In the sequel, we provide the proof in the non-smooth setting for the $'+'$; the case $'-'$  is analogous.    For any $h>0$, Cauchy--Schwarz inequality implies
	$$\left(\frac{1}{h}\int_{\{t<u\leq t+h\}}|\nabla u|{\rm d}{\sf m}\right)^2\leq \frac{\mu_+(t)-\mu_+(t+h)}{h}\frac{1}{h}\int_{\{t<u\leq t+h\}}|\nabla u|^2{\rm d}{\sf m}.$$
	When $h\to 0$, the latter relation and the co-area formula \eqref{co-area-formlula} imply that
	\begin{equation}\label{eq:Per2leq}
		{\rm Per}^2(\{u_\pm>t\}) \leq -\mu_+'(t)\int_{\{u=t\}}|\nabla u|{\rm d}{\rm Per}(\{u_+>t\}).
	\end{equation}
	We conclude combining the last inequality with Proposition \ref{prop-div-them}.
	\end{proof}

\subsection{Nodal decomposition and the appearance of ${\sf AVR}_{\sf m}$} In this subsection we prove nodal decomposition estimates both for the terms $\ds\int_{\tilde \Omega}u^2 {\rm d}{\sf m}$ and $\ds\int_{\tilde \Omega}(\Delta u)^2 {\rm d}{\sf m}$,   where the asymptotic volume ratio ${\sf AVR}_{\sf m}$ plays a crucial role.  Let 
\begin{equation}\label{definition-F-pm}
	F_+(s)=(\Delta u)^\#_-(s)-(\Delta u)^\#_+({\sf m}(\tilde \Omega)-s)\ \ {\rm and}\ \ F_-(s)=-F_+({\sf m}(\tilde \Omega)-s),\ \ s\in [0,{\sf m}(\tilde \Omega)].
\end{equation}
By definition, it follows that at least one of the terms   $(\Delta u)^\#_-(s)$ and $(\Delta u)^\#_+({\sf m}(\tilde \Omega)-s)$ is zero, i.e., 
\begin{equation}\label{simult=0}
	(\Delta u)^\#_-(s)(\Delta u)^\#_+({\sf m}(\tilde \Omega)-s)=0,\ \ s\in [0,{\sf m}(\tilde \Omega)].
\end{equation}

\begin{proposition} \label{f-hasonlitas}
	For every $t>0$ one has that 
	$$\ds\displaystyle\int_0^{\mu_\pm(t)} F_\pm(s)\, {\rm d}s\geq -\int_{\{u_\pm>t \}}\Delta u\,{\rm d}{\sf m}.$$
\end{proposition}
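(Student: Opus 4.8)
The plan is to prove the statement for the sign $'+'$ (the case $'-'$ is completely analogous, with the roles of $(\Delta u)_+$ and $(\Delta u)_-$ interchanged and Proposition \ref{prop-div-them} and \eqref{definition-F-pm} used accordingly). The conceptual point is that, by \eqref{definition-F-pm} together with \eqref{simult=0}, the function $F_+$ on $[0,{\sf m}(\tilde\Omega)]$ is precisely the non-increasing rearrangement of the \emph{signed} function $-\Delta u$ over $\tilde\Omega$ (with respect to ${\sf m}$); since $\{u_+>t\}=\{u>t\}$ is, for every $t>0$, an open subset of $\tilde\Omega$ with ${\sf m}(\{u>t\})=\mu_+(t)$ (here $u$ is continuous), the claim is morally a Hardy--Littlewood inequality for $-\Delta u$. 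Operationally, I would avoid signed rearrangements altogether: split $-\Delta u=(\Delta u)_--(\Delta u)_+$ on $\tilde\Omega$, integrate \eqref{definition-F-pm} over $[0,\mu_+(t)]$ and perform the change of variable $\sigma={\sf m}(\tilde\Omega)-s$ in the term coming from $(\Delta u)^\#_+$, thereby reducing the assertion, via $-\int_{\{u_+>t\}}\Delta u\,{\rm d}{\sf m}=\int_{\{u>t\}}(\Delta u)_-\,{\rm d}{\sf m}-\int_{\{u>t\}}(\Delta u)_+\,{\rm d}{\sf m}$, to the two inequalities
\[
\int_{\{u>t\}}(\Delta u)_-\,{\rm d}{\sf m}\ \le\ \int_0^{\mu_+(t)}(\Delta u)^\#_-(s)\,{\rm d}s,\qquad
\int_{\{u>t\}}(\Delta u)_+\,{\rm d}{\sf m}\ \ge\ \int_{{\sf m}(\tilde\Omega)-\mu_+(t)}^{{\sf m}(\tilde\Omega)}(\Delta u)^\#_+(\sigma)\,{\rm d}\sigma .
\]

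The first of these is a direct application of the Hardy--Littlewood inequality \eqref{Hardy-Littlewood} to the nonnegative Borel function $(\Delta u)_-$ on $\tilde\Omega$ and to the open set $S=\{u>t\}\subset\tilde\Omega$, once one records that $\|(\Delta u)^*_-\|_{L^1(S^*,\sigma_N)}=\int_0^{{\sf m}(S)}(\Delta u)^\#_-(s)\,{\rm d}s$ by the substitution $s=\omega_N\tau^N$ (recall $(\Delta u)^*_-(\tau)=(\Delta u)^\#_-(\omega_N\tau^N)$). The second is a \emph{reverse} Hardy--Littlewood estimate: for any nonnegative $g\in L^1(\tilde\Omega,{\sf m})$ and any Borel $A\subset\tilde\Omega$, the layer-cake formula together with the elementary bound ${\sf m}(A\cap\{g>\tau\})\ge {\sf m}(A)+{\sf m}(\{g>\tau\})-{\sf m}(\tilde\Omega)$ gives
\[
\int_A g\,{\rm d}{\sf m}=\int_0^\infty {\sf m}(A\cap\{g>\tau\})\,{\rm d}\tau\ \ge\ \int_0^\infty\big({\sf m}(\{g>\tau\})-({\sf m}(\tilde\Omega)-{\sf m}(A))\big)_+\,{\rm d}\tau=\int_{{\sf m}(\tilde\Omega)-{\sf m}(A)}^{{\sf m}(\tilde\Omega)}g^\#(\sigma)\,{\rm d}\sigma ,
\]
where the last equality uses that $\{g^\#>\tau\}$ is the interval $[0,{\sf m}(\{g>\tau\}))$; applying this with $g=(\Delta u)_+$ and $A=\{u>t\}$ yields exactly the second displayed inequality. (Alternatively, the reverse bound follows from \eqref{Hardy-Littlewood} applied to $\tilde\Omega\setminus A$ together with the equimeasurability $\int_{\tilde\Omega}g\,{\rm d}{\sf m}=\int_0^{{\sf m}(\tilde\Omega)}g^\#$ coming from \eqref{equimeasure}.)

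Combining the two inequalities with the identities of the first paragraph then gives $-\int_{\{u_+>t\}}\Delta u\,{\rm d}{\sf m}\le \int_0^{\mu_+(t)}F_+(s)\,{\rm d}s$ for every $t>0$, which is the assertion for $'+'$; the case $'-'$ is obtained by running the same argument with $-u$ in place of $u$, which through \eqref{definition-F-pm} turns $F_+$ into $F_-$ and exchanges $(\Delta u)_+$ with $(\Delta u)_-$, invoking Proposition \ref{prop-div-them} for the corresponding sign. The step requiring most care is the reduction in the first paragraph: one must see, through \eqref{definition-F-pm} and \eqref{simult=0}, that the two summands of $F_+$ have to be controlled by two \emph{opposite} rearrangement bounds — the ordinary Hardy--Littlewood inequality for the $(\Delta u)_-$-part and its reverse for the $(\Delta u)_+$-part — and keep the reflection $s\mapsto {\sf m}(\tilde\Omega)-s$ and the $(\cdot)^\#$/$(\cdot)^*$ normalizations straight; the reverse Hardy--Littlewood inequality itself, not among the tools recalled in \S\ref{section-2}, is elementary as indicated above.
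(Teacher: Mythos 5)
Your proposal is correct and follows essentially the same route as the paper: the reduction to the two inequalities $\int_0^{\mu_+(t)}(\Delta u)^\#_-\ge\int_{\{u>t\}}(\Delta u)_-\,{\rm d}{\sf m}$ and $\int_{\{u>t\}}(\Delta u)_+\,{\rm d}{\sf m}\ge\int_0^{\mu_+(t)}(\Delta u)^\#_+({\sf m}(\tilde\Omega)-s)\,{\rm d}s$ is exactly the paper's decomposition, and the first is handled identically via \eqref{Hardy-Littlewood}. The only (immaterial) difference is in the second inequality, where you derive the reverse Hardy--Littlewood bound directly from the layer-cake formula, whereas the paper applies \eqref{Hardy-Littlewood} to the complement $S_t=\{u\le t\}$ together with equimeasurability --- precisely the alternative you note in parentheses.
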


\begin{proof} Again, we consider the case $'+'$, the other one being analogous. 
Let $t>0$ be fixed. It is enough to prove that 
\begin{equation}\label{kell-becsles-1}
	\displaystyle\int_0^{\mu_+(t)} (\Delta  u)^\#_-(s)\, {\rm d}s\geq \int_{\{u_+>t \}}(\Delta  u)_-{\rm d}{\sf m},
\end{equation} 
and 
\begin{equation}\label{kell-becsles-2}
	\displaystyle\int_{\{u_+>t \}}(\Delta u)_+ {\rm d}{\sf m}\geq \int_0^{\mu_+(t)} (\Delta u)^\#_+({\sf m}(\tilde \Omega)-s)\, {\rm d}s.
\end{equation}
We recall that $r_t>0$ is such that $\omega_N r_t^N=\mu_+(t)$. Thus, inequality  (\ref{kell-becsles-1}) follows by a change of variable and \eqref{Hardy-Littlewood} as
\begin{eqnarray*}
	\displaystyle\int_0^{\mu_+(t)} (\Delta u)^\#_-(s){\rm d}s&=&\int_0^{ r_t} (\Delta u)^\#_-(\omega_N z^N){\rm d}\sigma_N (z)=\int_0^{r_t}(\Delta u)^*_-(z){\rm d} \sigma_N (z)\\&\geq & \int_{\{u_+>t \}}(\Delta u)_-{\rm d}{\sf m},
\end{eqnarray*}
where we used that   $\{u_+>t \}^*=[0,r_t]$. The proof of (\ref{kell-becsles-2}) is similar. Indeed, if $\tau_t>0$ is the unique  number verifying $\omega_N\tau_t^N={\sf m}(\tilde \Omega)-\mu_+(t)$, and $S_t:=\{u\leq t\}=\{x\in \tilde \Omega: u(x)\leq t\}$,  then ${\sf m}(S_t)={\sf m}(\tilde \Omega)-\mu_+(t)=\omega_N\tau_t^N$, and a change of variable and  \eqref{Hardy-Littlewood} imply  that 
\begin{eqnarray*}
	\int_0^{\mu_+(t)} (\Delta u)^\#_+({\sf m}(\tilde \Omega)-s){\rm d}s&=& \int_{\tau_t}^{\tilde R}(\Delta u)^*_+(z){\rm d} \sigma_N (z)=\int_{0}^{\tilde R}(\Delta u)^*_+(z){\rm d} \sigma_N (z)-\int_{0}^{\tau_t}(\Delta u)^*_+(z){\rm d} \sigma_N (z)\\&\leq  & \int_{\tilde \Omega}(\Delta u)_+{\rm d}{\sf m}-\int_{S_t}(\Delta u)_+{\rm d}{\sf m}=\int_{\tilde \Omega\setminus S_t}(\Delta u)_+{\rm d}{\sf m}\\&=&\int_{\{u_+>t \}}(\Delta u)_+ {\rm d}{\sf m}.
\end{eqnarray*}
By (\ref{kell-becsles-1}) and (\ref{kell-becsles-2}) one has 
\begin{eqnarray*}
	\int_0^{\mu_+(t)} F_+(s)\, {\rm d}s&=&\int_0^{\mu_+(t)} (\Delta u)^\#_-(s)\, {\rm d}s-\int_0^{\mu_+(t)}(\Delta u)^\#_+({\sf m}(\tilde \Omega)-s)\, {\rm d}s\\&\geq& 
	\int_{\{u_+>t \}}(\Delta  u)_- \, {\rm d}{\sf m}-\int_{\{u_+>t \}}(\Delta u)_+\,  {\rm d}{\sf m}\\
	&=&-\int_{\{u_+>t \}}\Delta u\, {\rm d}{\sf m},
\end{eqnarray*}
which is precisely the required claim. 
\end{proof}

Let $a$ and $b$  from {\rm (\ref{a-b})}. We consider the functions $V_+:[0,a]\to \mathbb R$ and $V_-:[0,b]\to \mathbb R$ defined by 
\begin{equation}\label{v-definit}
	V_+(x)=\frac{1}{N\omega_N}\int_{x}^a \rho^{1-N}\left(\int_0^{\omega_N \rho^N}F_+(s){\rm d}s\right){\rm d}\rho,\ \ x\in [0,a],
	\end{equation}
	and
	\begin{equation}\label{w-definit}
		V_-(x)=\frac{1}{N\omega_N}\int_{x}^b \rho^{1-N}\left(\int_0^{\omega_N \rho^N}F_-(s){\rm d}s\right){\rm d}\rho,\ \ x\in [0,b],
				\end{equation}
		respectively. The following comparison principle holds: 
		
		\begin{proposition} \label{talenti-result-0}
			Let  $V_+$ and $V_-$ from {\rm (\ref{v-definit})} and {\rm (\ref{w-definit})}, respectively. Then
			\begin{equation}\label{1-compar-0}
				{\sf AVR}_ {\sf m}^\frac{2}{N}	u_+^*\leq V_+\ \ {in}\ \ [0,a];
			\end{equation}
			\begin{equation}\label{2-compar-0}
				{\sf AVR}_ {\sf m}^\frac{2}{N}	u_-^*\leq V_-\ \ {in}\ \ [0,b].
			\end{equation}
		\end{proposition}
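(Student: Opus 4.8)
The plan is to establish \eqref{1-compar-0}; inequality \eqref{2-compar-0} follows by the symmetric argument. The starting point is the pointwise differential inequality that the rearrangement $u_+^*$ satisfies. First I would fix $t>0$ and chain together the results already at our disposal: Proposition \ref{isoperi-prop} gives ${\rm Per}^2(\{u_+>t\})\le \mu_+'(t)\int_{\{u_+>t\}}\Delta u\,{\rm d}{\sf m}$, and Proposition \ref{f-hasonlitas} gives $-\int_{\{u_+>t\}}\Delta u\,{\rm d}{\sf m}\le \int_0^{\mu_+(t)}F_+(s)\,{\rm d}s$. Combining these (noting $\mu_+'(t)\le 0$), one obtains
\begin{equation*}
{\rm Per}^2(\{u_+>t\})\le -\mu_+'(t)\int_0^{\mu_+(t)}F_+(s)\,{\rm d}s.
\end{equation*}
Next, the sharp isoperimetric inequality \eqref{eqn-isoperimetric-2} applied to the set $\{u_+>t\}$, whose measure is $\mu_+(t)=\omega_N r_t^N$, bounds the left-hand side from below: ${\rm Per}^2(\{u_+>t\})\ge N^2\omega_N^{2/N}{\sf AVR}_{\sf m}^{2/N}\,\mu_+(t)^{2(N-1)/N}= N^2\omega_N^2\,{\sf AVR}_{\sf m}^{2/N}\,r_t^{2(N-1)}$. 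Putting the two together yields the key estimate
\begin{equation*}
N^2\omega_N^2\,{\sf AVR}_{\sf m}^{2/N}\,r_t^{2(N-1)}\le -\mu_+'(t)\int_0^{\mu_+(t)}F_+(s)\,{\rm d}s.
\end{equation*}

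The second step is to convert this into a statement about $u_+^*$. Recall $u_+^*(x)=u_+^\#(\omega_N x^N)$, and by the definition of the generalized inverse, for a.e.\ value $x$ the relation $t=u_+^*(x)$ is equivalent to $\mu_+(t)=\omega_N x^N$, i.e.\ $r_t=x$. Differentiating $\mu_+(u_+^*(x))=\omega_N x^N$ (a.e., using that $u_+^*$ is the monotone rearrangement and the standard absolute-continuity properties of rearrangements) gives $\mu_+'(u_+^*(x))\,(u_+^*)'(x)=N\omega_N x^{N-1}$, hence $-\mu_+'(t)=-N\omega_N x^{N-1}/(u_+^*)'(x)$ with $(u_+^*)'(x)\le 0$. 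Substituting $t=u_+^*(x)$, $r_t=x$, and $\mu_+(t)=\omega_N x^N$ into the key estimate, and dividing through by $N\omega_N x^{N-1}$, produces
\begin{equation*}
N\omega_N\,{\sf AVR}_{\sf m}^{2/N}\,x^{N-1}\le -(u_+^*)'(x)^{-1}\left(\int_0^{\omega_N x^N}F_+(s)\,{\rm d}s\right)\cdot(-1),
\end{equation*}
which rearranges to
\begin{equation*}
-{\sf AVR}_{\sf m}^{2/N}\,(u_+^*)'(x)\le \frac{1}{N\omega_N}\,x^{1-N}\int_0^{\omega_N x^N}F_+(s)\,{\rm d}s.
\end{equation*}

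The final step is to integrate this differential inequality. Since $u_+\in H^{2,2}_0(\Omega,{\sf m})$ and $\tilde\Omega\subset\Omega$, the function $u_+$ vanishes (in the appropriate trace sense) on $\partial\Omega_+$, so $u_+^*(a)=0$; thus for $x\in[0,a]$ we may write $u_+^*(x)=-\int_x^a (u_+^*)'(\rho)\,{\rm d}\rho$. Multiplying the pointwise inequality by ${\sf AVR}_{\sf m}^{2/N}$ — wait, rather: from the displayed inequality we integrate both sides over $[x,a]$ to obtain exactly
\begin{equation*}
{\sf AVR}_{\sf m}^{2/N}\,u_+^*(x)=-{\sf AVR}_{\sf m}^{2/N}\int_x^a (u_+^*)'(\rho)\,{\rm d}\rho\le \frac{1}{N\omega_N}\int_x^a \rho^{1-N}\left(\int_0^{\omega_N\rho^N}F_+(s)\,{\rm d}s\right){\rm d}\rho=V_+(x),
\end{equation*}
which is \eqref{1-compar-0}.

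\textbf{Main obstacle.} The delicate point is the second step: justifying the change of variables $t\leftrightarrow x$ and the chain-rule manipulation $\mu_+'(u_+^*(x))(u_+^*)'(x)=N\omega_N x^{N-1}$ at the level of rearrangements in the metric-measure setting. One must be careful about the set where $\mu_+$ is flat (equivalently, where $u_+^*$ jumps) versus where $u_+^*$ is flat (equivalently, where $\mu_+$ jumps, corresponding to positive-measure level sets $\{u=t\}$); on the latter the inequality $\int_{\{u_+>t\}}\Delta u\,{\rm d}{\sf m}\le 0$-type information interacts with Proposition \ref{prop-laplace=00}. The standard remedy, following Talenti, is to work with the a.e.-defined derivatives guaranteed by the monotonicity and local absolute continuity of $\mu_+$ and $u_+^*$ (these hold because the co-area formula \eqref{co-area-formlula} controls $\mu_+'$ in terms of perimeters), to discard the measure-zero exceptional set of $t$'s appearing in Propositions \ref{prop-div-them}, \ref{isoperi-prop}, \ref{f-hasonlitas}, and to verify that on any interval where $u_+^*$ is constant the right-hand side $V_+$ is nondecreasing there as well so that the integrated inequality is preserved. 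I would also need the boundary value $u_+^*(a)=0$, which follows since $u_+=0$ ${\sf m}$-a.e.\ on $\Omega\setminus\Omega_+$ and $\mu_+(0^+)={\sf m}(\Omega_+)=\omega_N a^N$.
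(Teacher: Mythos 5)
Your argument is correct and takes essentially the same route as the paper: both proofs rest on the same chain — the sharp isoperimetric inequality \eqref{eqn-isoperimetric-2} applied to $\{u_+>t\}$, combined with Propositions \ref{isoperi-prop} and \ref{f-hasonlitas} — followed by the change of variables between the level $t$ and the radius $r_t=x$ and an integration. The only cosmetic difference is that the paper integrates the key inequality in $t$ over $[0,\eta]$ and then substitutes $\rho=r_t$, whereas you recast it as a differential inequality for $(u_+^*)'$ in $x$ and integrate over $[x,a]$; these are equivalent, and the monotonicity/absolute-continuity caveats you flag are present (and treated at the same level of detail) in the paper's version as well.
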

		\begin{proof}
			We  prove \eqref{1-compar-0}, the estimate \eqref{2-compar-0} being similar. Keeping the previous notations, we notice that in the 1-dimensional model space 
			$([0,\infty),{\sf d}_{\rm eu},\sigma_N=N\omega_Nr^{N-1}\mathcal L^1)$, one has   that 
			\begin{equation}\label{eq:model-space-equal}
					{\rm Per}(\{u_+^*>t\})=N\omega_N^\frac{1}{N}\mu_+(t)^\frac{N-1}{N}=N\omega_N r_t^{N-1}, \quad \text{for every $t>0$};
			\end{equation}
				thus, by the isoperimetric inequality \eqref{eqn-isoperimetric-2} it follows that 
			\begin{equation}\label{Area-Area-0}
				{\sf AVR}_ {\sf m}^\frac{1}{N}	{\rm Per}(\{u_+^*>t\})\leq {\rm Per}(\{u_+>t\}),  \ \ {\rm for\  every}\ t>0.
			\end{equation}
			By \eqref{Area-Area-0} and Propositions \ref{isoperi-prop} and \ref{f-hasonlitas}, one has for $\mathcal L^1$-a.e.\ $t>0$ that
			$${\sf AVR}_ {\sf m}^\frac{2}{N}	{\rm Per}^2(\{u_+^*>t\})\leq \mu_+'(t)\int_{\{u_+>t\}}\Delta u\,{\rm d}{\sf m}\leq  -\mu_+'(t)\int_0^{\mu_+(t)} F_+(s){\rm d}s.$$ Since 
			$\mu_+'(t)=N\omega_N r_t^{N-1}r_t',
			$
			the previous relation reads
			$${\sf AVR}_ {\sf m}^\frac{2}{N} N\omega_N\leq -r_t'r_t^{1-N}\int_0^{\omega_N r_t^N} F_+(s)\,{\rm d}s, \quad \text{ for ${\mathcal L}^1$-a.e.\ $t>0$.}$$
			By an integration, it follows  that
			$${\sf AVR}_ {\sf m}^\frac{2}{N} N\omega_N\eta\leq -\int_0^\eta r_t'r_t^{1-N}\int_0^{\omega_N r_t^N} F_+(s)\, {\rm d}s{\rm d}t, \quad \text{for every $\eta\in [0,{\rm ess\,sup}\, u_+]$}.$$
			If we change the variable $r_t=\rho$ and we recall that $r_0=a$, it yields
			$${\sf AVR}_ {\sf m}^\frac{2}{N}\eta\leq \frac{1}{N\omega_N}\int_{r_\eta}^a \rho^{1-N}\left(\int_0^{\omega_N \rho^N} F_+(s)\, {\rm d}s\right){\rm d}\rho.$$
			If $r_\eta=x\in [0,a]$, one has that $\mu_+(\eta)=\omega_N r_\eta^N=\omega_N x^N$; moreover, since $u_+^\#$ is the generalized inverse of $\mu_+$, it follows that 
			$$u_+^*(x)=u_+^\#(\omega_N x^N)=u_+^\#(\mu_+(\eta))=\eta.$$ Therefore, for every $x\in [0,a]$, from \eqref{v-definit} we infer that 
			$${\sf AVR}_ {\sf m}^\frac{2}{N} u_+^*(x)\leq \frac{1}{N\omega_N}\int_{x}^a \rho^{1-N}\left(\int_0^{\omega_N \rho^N} F_+(s){\rm d}s\right){\rm d}\rho=V_+(x),$$
			which is the required relation \eqref{1-compar-0}.
		\end{proof}
		
		The following result summarizes the constructions in this section; to state it, we consider the 1-dimensional Laplacian for the function $V:[0,\infty)\to \mathbb R$ in the model space $([0,\infty),{\sf d}_{\rm eu},\sigma_N)$, i.e., 
		\begin{equation}\label{def-1-laplace}
			\Delta_{0,N} V(r)=V''(r)+\frac{N-1}{r}V'(r),\ \ r>0.
		\end{equation}

		\begin{theorem} \label{talenti-result-thm}
			Let  $V_+$ and $V_-$ from {\rm (\ref{v-definit})} and {\rm (\ref{w-definit})}, respectively. Then
			\begin{equation}\label{u-v-w}
				{\sf AVR}_ {\sf m}^\frac{4}{N}	\int_{\tilde \Omega}u^2 \, {\rm d}{\sf m}\leq \int_{0}^aV_+^2 \, {\rm d}\sigma_N+\int_{0}^b V_-^2 \, {\rm d}\sigma_N,
			\end{equation}
			and
			\begin{equation}\label{laplace-hasonlitas}
				\int_{\tilde \Omega} (\Delta u)^2 \, {\rm d}{\sf m}=\int_{0}^a (\Delta_{0,N}V_+)^2 \, {\rm d}\sigma_N+ \int_{0}^b (\Delta_{0,N}V_-)^2 \, {\rm d}\sigma_N.
			\end{equation}
		\end{theorem}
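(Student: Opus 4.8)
The plan is to establish the two displayed relations separately, starting from the pointwise comparison in Proposition~\ref{talenti-result-0} for \eqref{u-v-w}, and from a direct computation of $\Delta_{0,N}V_\pm$ together with the nodal decomposition \eqref{simult=0} for \eqref{laplace-hasonlitas}. For the first inequality, I would square the comparison \eqref{1-compar-0}, integrate against $\sigma_N$ on $[0,a]$, do the same with \eqref{2-compar-0} on $[0,b]$, and add the two. The left-hand side then becomes ${\sf AVR}_{\sf m}^{4/N}\big(\int_0^a (u_+^*)^2\,{\rm d}\sigma_N+\int_0^b(u_-^*)^2\,{\rm d}\sigma_N\big)$, and by the equimeasurability property \eqref{equimeasure} applied to $u_+$ on $\Omega_+$ (whose rearrangement domain is $[0,a]$ by \eqref{a-b}) and to $u_-$ on $\Omega_-$ (rearrangement domain $[0,b]$) this equals ${\sf AVR}_{\sf m}^{4/N}\big(\int_{\Omega_+}u_+^2\,{\rm d}{\sf m}+\int_{\Omega_-}u_-^2\,{\rm d}{\sf m}\big)={\sf AVR}_{\sf m}^{4/N}\int_{\tilde\Omega}u^2\,{\rm d}{\sf m}$, since $\tilde\Omega=\Omega_+\cup\Omega_-$ is a disjoint union and $u^2=u_+^2$ on $\Omega_+$, $u^2=u_-^2$ on $\Omega_-$. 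This yields \eqref{u-v-w}.

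For \eqref{laplace-hasonlitas} the key observation is that $V_+$ was \emph{built} to solve, in the model space, the equation $\Delta_{0,N}V_+=F_+$ on $[0,a]$ (with $V_+(a)=0$ and the natural bounded/Neumann behaviour at the origin), and similarly $\Delta_{0,N}V_-=F_-$ on $[0,b]$. Concretely, differentiating \eqref{v-definit} gives $V_+'(x)=-\frac{1}{N\omega_N}x^{1-N}\int_0^{\omega_N x^N}F_+(s)\,{\rm d}s$, hence $x^{N-1}V_+'(x)=-\frac{1}{N\omega_N}\int_0^{\omega_N x^N}F_+(s)\,{\rm d}s$, and differentiating once more (using $\frac{{\rm d}}{{\rm d}x}(\omega_N x^N)=N\omega_N x^{N-1}$) produces $\big(x^{N-1}V_+'(x)\big)'=-x^{N-1}F_+(x)$, i.e. exactly $\Delta_{0,N}V_+=F_+$ after dividing by $x^{N-1}$; the same computation applies to $V_-$. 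Therefore $\int_0^a(\Delta_{0,N}V_+)^2\,{\rm d}\sigma_N+\int_0^b(\Delta_{0,N}V_-)^2\,{\rm d}\sigma_N=\int_0^a F_+^2\,{\rm d}\sigma_N+\int_0^b F_-^2\,{\rm d}\sigma_N$. The remaining task is to identify this sum with $\int_{\tilde\Omega}(\Delta u)^2\,{\rm d}{\sf m}$. By the reflection symmetry $F_-(s)=-F_+({\sf m}(\tilde\Omega)-s)$ in \eqref{definition-F-pm}, a change of variable folds the $V_-$ contribution onto $[{\sf m}(\tilde\Omega)-\omega_N b^N,{\sf m}(\tilde\Omega)]=[\omega_N a^N,{\sf m}(\tilde\Omega)]$, so the two terms combine into a single integral $\int_0^{{\sf m}(\tilde\Omega)}F_+(s)^2\,{\rm d}s$ over the parameter space; here one uses $\tilde R^N=a^N+b^N$. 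Now expand $F_+(s)^2=\big((\Delta u)_-^\#(s)\big)^2+\big((\Delta u)_+^\#({\sf m}(\tilde\Omega)-s)\big)^2$, where the cross term vanishes by the disjointness \eqref{simult=0}. Integrating the first summand over $s\in[0,{\sf m}(\tilde\Omega)]$ and using equimeasurability gives $\int_{\tilde\Omega}(\Delta u)_-^2\,{\rm d}{\sf m}$ (the level sets of $(\Delta u)_-$ sit inside $\tilde\Omega$, whose total measure is ${\sf m}(\tilde\Omega)$, and $(\Delta u)_-^\#$ vanishes for parameter values exceeding ${\sf m}(\{(\Delta u)_-> 0\})$); integrating the second summand and substituting $s\mapsto {\sf m}(\tilde\Omega)-s$ gives $\int_{\tilde\Omega}(\Delta u)_+^2\,{\rm d}{\sf m}$. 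Since $(\Delta u)^2=(\Delta u)_+^2+(\Delta u)_-^2$ pointwise on $\tilde\Omega$, the sum equals $\int_{\tilde\Omega}(\Delta u)^2\,{\rm d}{\sf m}$, proving \eqref{laplace-hasonlitas}.

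I expect the main obstacle to be the bookkeeping in the last identity: one must be careful that the rearrangements $(\Delta u)_\pm^\#$ are taken with respect to the measure ${\sf m}$ restricted to $\tilde\Omega$ (so the relevant total mass is ${\sf m}(\tilde\Omega)$, not ${\sf m}(\Omega)$), that $(\Delta u)_\pm^\#(s)=0$ for $s$ larger than the measure of the corresponding support, so that extending the integrals to the full interval $[0,{\sf m}(\tilde\Omega)]$ is harmless, and that the change of variables matching $F_-$ on $[0,b]$ to $F_+$ on the tail interval is consistent with the relation $\tilde R^N=a^N+b^N$ and with the $\sigma_N$-weight $N\omega_N r^{N-1}$. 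The orthogonality \eqref{simult=0} is exactly what makes the decomposition of $\int F_+^2$ clean, mirroring the disjointness $\Omega_+\cap\Omega_-=\emptyset$ on the original space; once this is set up, every step is a routine application of \eqref{equimeasure} and elementary one-dimensional calculus. A minor point to check is the integrability/absolute continuity needed to differentiate \eqref{v-definit} twice and to justify the integrations by parts implicit in $\int(\Delta_{0,N}V_\pm)^2\,{\rm d}\sigma_N=\int F_\pm^2\,{\rm d}\sigma_N$, but this follows since $\Delta u\in L^2(\Omega,{\sf m})\subset L^2$ and hence $F_\pm\in L^2(\sigma_N)$ on the respective intervals.
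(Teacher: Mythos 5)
Your proposal is correct and follows essentially the same route as the paper: equimeasurability plus the squared comparison of Proposition~\ref{talenti-result-0} for \eqref{u-v-w}, and the identities $\Delta_{0,N}V_\pm(r)=-F_\pm(\omega_N r^N)$ together with the reflection $F_-(s)=-F_+({\sf m}(\tilde\Omega)-s)$ and the orthogonality \eqref{simult=0} for \eqref{laplace-hasonlitas}. Only note the harmless slip that your displayed derivative should read $\bigl(x^{N-1}V_+'(x)\bigr)'=-x^{N-1}F_+(\omega_N x^N)$, i.e.\ $\Delta_{0,N}V_+=-F_+(\omega_N\,\cdot^{\,N})$; the sign and the composition are immaterial since only $F_\pm^2$ enters.
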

		\begin{proof}
			The proof of 	\eqref{u-v-w} follows by Proposition \ref{talenti-result-0} combined with \eqref{equimeasure}; indeed, one has that 
			\begin{eqnarray*}
				{\sf AVR}_ {\sf m}^\frac{4}{N}	\int_{\tilde \Omega}u^2 {\rm d}{\sf m}&=&{\sf AVR}_ {\sf m}^\frac{4}{N}	\int_{\Omega_+}u_+^2 {\rm d}{\sf m}+{\sf AVR}_ {\sf m}^\frac{4}{N}	\int_{ \Omega_-}u_-^2 {\rm d}{\sf m}\\&=&{\sf AVR}_ {\sf m}^\frac{4}{N}	\int_{0}^a (u_+^*)^2 {\rm d}\sigma_N+{\sf AVR}_ {\sf m}^\frac{4}{N}	\int_{ 0}^b(u_-^*)^2 {\rm d}\sigma_N\\&\leq&\int_{0}^aV_+^2 {\rm d}\sigma_N+\int_{0}^b V_-^2 {\rm d}\sigma_N.
			\end{eqnarray*}
			Let us focus now on \eqref{laplace-hasonlitas}. First, due to \eqref{simult=0} and \eqref{equimeasure}, we observe that
			\begin{align*}
				\int_{0}^{{\sf m}(\tilde \Omega)} F^2_+(s){\rm d}s
				&=\int_{0}^{{\sf m}(\tilde \Omega)} \left[(\Delta u)^\#_-(s)^2+(\Delta u)^\#_+({\sf m}(\tilde \Omega)-s)^2-2(\Delta u)^\#_-(s)(\Delta u)^\#_+({\sf m}(\tilde \Omega)-s)\right]{\rm d}s
				\\
				&=\int_{0}^{\omega_N \tilde R^N} \left[(\Delta u)^\#_-(s)^2+(\Delta u)^\#_+(s)^2\right]{\rm d}s\\&=\int_{0}^{\tilde R} \left[(\Delta u)^*_-(t)^2+(\Delta u)^*_+(t)^2\right]{\rm d}\sigma_N(t)	=\int_{\tilde \Omega} \left[(\Delta u)_-^2+(\Delta u)^2_+\right]{\rm d}{\sf m}
				\\&=\int_{\tilde \Omega} (\Delta u)^2{\rm d}{\sf m}.
			\end{align*}
			On the other hand, by using \eqref{def-1-laplace}, 
			a simple computation shows that
			\begin{equation}\label{laplace-1}
				\Delta_{0,N} V_+(r)=-F_+(\omega_N r^N),\ \ r\in (0,a],
			\end{equation}
			\begin{equation}\label{laplace-2}
				\Delta_{0,N} V_-(r)=-F_-(\omega_N r^N),\ \ r\in (0,b].
			\end{equation}
			Therefore, by \eqref{laplace-1},  \eqref{laplace-2} and  \eqref{definition-F-pm}, combined with the fact that $a^N+b^N=\tilde R^N$, one has that
			\begin{eqnarray*}
				\int_{0}^a (\Delta_{0,N}V_+)^2{\rm d}\sigma_N+ \int_{0}^b (\Delta_{0,N}V_-)^2{\rm d}\sigma_N&=&\int_{0}^a F_+^2(\omega_N r^N){\rm d}\sigma_N(r)+ \int_{0}^b F^2_-(\omega_N r^N){\rm d}\sigma_N(r)\\&=& \int_{0}^{\omega_N a^N} F_+^2(s){\rm d}s+ \int_{0}^{\omega_N b^N} F_-^2(s){\rm d}s\\&=&\int_{0}^{\omega_N \tilde R^N} F_+^2(s){\rm d}s.
			\end{eqnarray*}
			Since ${\sf m}(\tilde \Omega)=\omega_N \tilde R^N$, by combining the above findings, relation  \eqref{laplace-hasonlitas} yields at once. 
		\end{proof}
		
		In the sequel, we collect the boundary conditions in terms of the functions $V_+$ and $V_-$:

		\begin{proposition} We have the following boundary conditions: 
			\begin{itemize}
				\item[(i)] $V_+(a)=V_-(b)=0;$ 
				\item[(ii)] $V_+'(a)a^{N-1}=V_-'(b)b^{N-1};$
				\item[(iii)] $\Delta_{0,N}V_+(a)+\Delta_{0,N}V_-(b)=0.$
			\end{itemize}
		\end{proposition}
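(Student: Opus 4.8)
The plan is to note that (i) is immediate from the definitions, that (iii) follows at once from the pointwise Laplacian formulas already established, and that (ii) reduces — after an elementary manipulation — to the single identity $\int_{\tilde\Omega}\Delta u\,{\rm d}{\sf m}=0$, which is the only substantial point.

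Item (i) needs no argument: by \eqref{v-definit} and \eqref{w-definit}, $V_+(a)=\frac{1}{N\omega_N}\int_a^a(\cdots)\,{\rm d}\rho=0$ and likewise $V_-(b)=0$. For item (iii), I would use \eqref{laplace-1} and \eqref{laplace-2}, which are valid up to the right endpoints, to write $\Delta_{0,N}V_+(a)=-F_+(\omega_N a^N)$ and $\Delta_{0,N}V_-(b)=-F_-(\omega_N b^N)$. Since $\tilde R^N=a^N+b^N$ and ${\sf m}(\tilde\Omega)=\omega_N\tilde R^N$, definition \eqref{definition-F-pm} gives $F_-(\omega_N b^N)=-F_+({\sf m}(\tilde\Omega)-\omega_N b^N)=-F_+(\omega_N a^N)$, so the two terms cancel and (iii) follows. (The same conclusion can alternatively be obtained by differentiating \eqref{v-definit} and \eqref{w-definit} twice directly.)

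For item (ii), I would differentiate \eqref{v-definit} and \eqref{w-definit} by the fundamental theorem of calculus, obtaining $V_+'(x)=-\frac{1}{N\omega_N}\,x^{1-N}\int_0^{\omega_N x^N}F_+(s)\,{\rm d}s$ and the analogue for $V_-'$. Evaluating at $x=a$ and $x=b$, multiplying by $a^{N-1}$ and $b^{N-1}$ respectively, and using the change of variable $s\mapsto{\sf m}(\tilde\Omega)-s$ together with $F_-(s)=-F_+({\sf m}(\tilde\Omega)-s)$, the asserted equality $V_+'(a)\,a^{N-1}=V_-'(b)\,b^{N-1}$ becomes equivalent to $\int_0^{{\sf m}(\tilde\Omega)}F_+(s)\,{\rm d}s=0$. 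Then, invoking \eqref{definition-F-pm}, another change of variable, and the equimeasurability \eqref{equimeasure} of each nonnegative integrable function $(\Delta u)_\pm$ on $\tilde\Omega$ with its generalized inverse $(\Delta u)^\#_\pm$ (which yields $\int_0^{{\sf m}(\tilde\Omega)}(\Delta u)^\#_\pm\,{\rm d}s=\int_{\tilde\Omega}(\Delta u)_\pm\,{\rm d}{\sf m}$), I would compute
\[\int_0^{{\sf m}(\tilde\Omega)}F_+(s)\,{\rm d}s=\int_{\tilde\Omega}(\Delta u)_-\,{\rm d}{\sf m}-\int_{\tilde\Omega}(\Delta u)_+\,{\rm d}{\sf m}=-\int_{\tilde\Omega}\Delta u\,{\rm d}{\sf m},\]
so that (ii) reduces to showing $\int_{\tilde\Omega}\Delta u\,{\rm d}{\sf m}=0$.

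This last identity is the crux. Since $\Omega=\tilde\Omega\cup\{u=0\}$ is a disjoint union and $\int_\Omega\Delta u\,{\rm d}{\sf m}=0$ by Proposition \ref{prop-laplace=00}, it suffices to show that $\Delta u=0$ ${\sf m}$-a.e.\ on $\{u=0\}$. This is the step that requires genuine care in the non-smooth framework: $u\in H^{2,2}_0(\Omega,{\sf m})$ admits an $L^2$-Hessian (cf.\ \eqref{eq:HessuL2}) whose ${\sf m}$-a.e.\ trace is $\Delta u$, so the claim would follow once we know that $\textrm{Hess}\,u$ vanishes ${\sf m}$-a.e.\ on the level set $\{u=0\}$. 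In the smooth setting this is the classical fact that the second derivatives of a $W^{2,1}$-function vanish a.e.\ on its level sets — equivalently, that the normal-derivative fluxes of $u$ across the common boundary of $\Omega_+$ and $\Omega_-$ cancel — and I expect its ${\sf RCD}(0,N)$ counterpart, to be obtained from the locality of $\textrm{Hess}\,u$ combined with the fact that $|\nabla u|=0$ ${\sf m}$-a.e.\ on $\{u=c\}$, to be the main obstacle. Granting it, the three boundary conditions are proved.
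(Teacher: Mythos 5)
Your proof is correct and follows essentially the same route as the paper: (i) and (iii) by direct computation from the definitions and the pointwise identities $\Delta_{0,N}V_\pm=-F_\pm(\omega_N(\cdot)^N)$ together with $F_-(\omega_Nb^N)=-F_+(\omega_Na^N)$, and (ii) by reducing, via the fundamental theorem of calculus, the change of variable $s\mapsto {\sf m}(\tilde\Omega)-s$ and equimeasurability, to the single identity $\int_{\tilde\Omega}\Delta u\,{\rm d}{\sf m}=0$, which the paper likewise obtains from Proposition \ref{prop-laplace=00} combined with the vanishing of the contribution of $\{u=0\}$. The locality step you single out as the crux --- that $\Delta u=0$ ${\sf m}$-a.e.\ on the level set $\{u=0\}$ --- is indeed what is needed, and the paper dispatches it with no more detail than your sketch (it simply invokes ``the fact that $u=0$ outside $\tilde\Omega$'').
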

		
		\begin{proof}
			Properties from (i) directly follow by the definitions of $V_+$ and $V_-$, see \eqref{v-definit} and \eqref{w-definit}, respectively. Property (iii) also follows by \eqref{laplace-1},  \eqref{laplace-2} and  \eqref{definition-F-pm}, where we use  $a^N+b^N=\tilde R^N$.
			
			We now prove (ii). 	By using Proposition \ref{prop-laplace=00} and the fact that    $u=0$ outside  $\tilde{\Omega}$,  a similar argument as in Theorem \ref{talenti-result-thm} implies that
			\begin{eqnarray*}
				0&=&-\int_{ \Omega}\Delta u\,{\rm d}{\sf m}=-\int_{\tilde \Omega}\Delta u\,{\rm d}{\sf m}\\&=&\int_{\tilde \Omega}(\Delta u)_-{\rm d}{\sf m}-\int_{\tilde \Omega}(\Delta u)_+{\rm d}{\sf m}=\int_{0}^{\tilde R}(\Delta  u)_-^*{\rm d}\sigma_N -\int_{0}^{\tilde R}(\Delta  u)_+^*{\rm d}\sigma_N\\&=&\int_{0}^{\tilde R}(\Delta  u)_-^\#(\omega_N t^N){\rm d}\sigma_N(t)-\int_{0}^{\tilde R}(\Delta  u)_+^\#(\omega_N t^N){\rm d}\sigma_N(t)\\&=&
				\int_0^{\omega_N \tilde R^N}(\Delta u)_-^\#(s){\rm d}s-\int_0^{\omega_N \tilde R^N}(\Delta u)_+^\#(s){\rm d}s\\&=&\int_0^{\omega_N \tilde R^N}(\Delta u)_-^\#(s){\rm d}s-\int_0^{\omega_N \tilde R^N}(\Delta u)_+^\#(\omega_N \tilde R^N-s){\rm d}s\\&=&\int_0^{\omega_N \tilde R^N}F_+(s){\rm d}s = \int_0^{\omega_N a^N}F_+(s){\rm d}s - \int_0^{\omega_N b^N}F_-(s){\rm d}s\\&=& \int_0^{a}F_+(\omega_N t^N){\rm d}\sigma_N(t) - \int_0^{b}F_-(\omega_N t^N){\rm d}\sigma_N(t)\\&=& -\int_0^{a}\Delta_{0,N} V_+(t){\rm d}\sigma_N(t) + \int_0^{b}\Delta_{0,N} V_-(t){\rm d}\sigma_N(t).
			\end{eqnarray*}
			Since a direct computation gives that $$\int_0^{a}\Delta_{0,N} V_+(t){\rm d}\sigma_N(t)=N\omega_N V_+'(a)a^{N-1}\ \ {\rm and}\ \ \ \int_0^{b}\Delta_{0,N} V_-(t){\rm d}\sigma_N(t)=N\omega_N V_-'(b)b^{N-1},$$
			the claim from (ii) follows. 
		\end{proof}
		
		We summarize the results from this section. According to \eqref{Lieb-modified}, by Theorem \ref{talenti-result-thm} we have that
		\begin{equation}\label{Lieb-modified-final}
				\Lambda_{\sf m}(\Omega)\geq \frac{\displaystyle\int_{\tilde \Omega} (\Delta u)^2 {\rm d}{\sf m}}{\displaystyle\int_{\tilde \Omega} u^2 {\rm d}{\sf m}}\geq {\sf AVR}_ {\sf m}^\frac{4}{N}\frac{\displaystyle\int_{0}^a (\Delta_{0,N}V_+)^2{\rm d}\sigma_N+ \int_{0}^b (\Delta_{0,N}V_-)^2{\rm d}\sigma_N }{\displaystyle\int_{0}^aV_+^2 {\rm d}\sigma_N+\int_{0}^b V_-^2 {\rm d}\sigma_N}\geq {\sf AVR}_ {\sf m}^\frac{4}{N}\mathcal J^{a,b}_N,
		\end{equation}
		where 
		\begin{equation}\label{J_a_b}
			\mathcal J^{a,b}_N:=\inf_{U,W} \frac{\displaystyle\int_{0}^a (\Delta_{0,N}U)^2{\rm d}\sigma_N+ \int_{0}^b (\Delta_{0,N}W)^2{\rm d}\sigma_N }{\displaystyle\int_{0}^aU^2 {\rm d}\sigma_N+\int_{0}^b W^2 {\rm d}\sigma_N},
			\end{equation}
		and $a^N+b^N=\tilde R^N$ with $\omega_N\tilde R^N={\sf m}(\tilde \Omega)\leq {\sf m}(\Omega)$, while the infimum in the coupled minimization problem \eqref{J_a_b} is considered for functions 
		$U:[0,a]\to [0,\infty)$ and $W:[0,b]\to [0,\infty)$  subject to the boundary 
		conditions 
		\begin{equation}\label{BCs}
			\ds\left\{ \begin{array}{lll}
				U(a)=W(b)=0; \\
				U'(a)a^{N-1}=W'(b)b^{N-1};\\
				\Delta_{0,N}U(a)+\Delta_{0,N}W(b)=0.
			\end{array}\right.
		\end{equation}
		Slightly modifying the proof of Proposition \ref{prop-inf-min} shows that the infimum in \eqref{J_a_b} is achieved.  
		
		The goal of the next section is to prove that for every $a,b\geq 0$ with $a^N+b^N=\tilde R^N$, one has that
		\begin{equation}\label{reduction}
		\mathcal 	J^{a,b}_N\geq \mathcal J^{\tilde R,0}_N = \mathcal J^{0,\tilde R}_N
		\end{equation}
		for a \textit{certain range} of $N$, i.e., we can algebraically cancel one of the parameters $a$ and $b$. Note that in the limit case one has
		\begin{equation}\label{minimizer}
			\mathcal J^{\tilde R,0}_N=\min_{U\in W^{2,2}_0((0,\tilde R),\sigma_N)} \frac{\displaystyle \int_{0}^{\tilde R} (\Delta_{0,N}U)^2{\rm d}\sigma_N }{\displaystyle\int_{0}^{\tilde R} U^2 {\rm d}\sigma_N},
		\end{equation}
		thus, if $\mathcal J^{\tilde R,0}_N=h^4$, the Euler--Lagrange equation gives 
		$$\Delta_{0,N}^2 U=h^4 U \ {\rm in} \ \  [0,\tilde R],$$
		subject to the clamped boundary conditions $U(\tilde R)=U'(\tilde R)=0.$  According to \eqref{def-1-laplace}, the classical solution of the latter fourth order equation is given by 
		$$U(s)=s^{-\nu}\left(A_1J_\nu(hs)+A_2I_\nu(hs)+A_3Y_\nu(hs)+A_4K_\nu(hs)\right),\ s\in(0,\tilde R],$$
		where $A_i\in \mathbb R$ are some constants, while $J_\nu$ and $Y_\nu$ are the Bessel functions of first and second kind,  and $I_\nu$ and $K_\nu$ are the modified Bessel functions of first and second kind, respectively, of order $\nu=\frac{N}{2}-1.$ Since both $Y_\nu$ and $K_\nu$ have singularities at the origin, they should be canceled, thus 
	\begin{equation}\label{solution-bessel}
		U(s)=s^{-\nu}(A_1J_\nu(hs)+A_2I_\nu(hs))\ s\in(0,\tilde R].
	\end{equation}
	The boundary conditions $U(\tilde R)=U'(\tilde R)=0$ imply that $U$ is nontrivial if \begin{equation}\label{determinant-0}
		{\rm det}\left[\begin{matrix}
			J_\nu(h\tilde R) & I_\nu(h\tilde R) \\
			J'_\nu(h\tilde R) & I'_\nu(h\tilde R)
		\end{matrix}\right]=0,
	\end{equation} 
	to which the first positive solution is $h:=\frac{h_\nu}{\tilde R},$ where $h_\nu$ is the first positive root of the cross-product   of $J_\nu$ and $I_\nu$. In particular, \eqref{solution-bessel} reduces, up to multiplicative factor, to 
	\begin{equation}\label{solution-bessel-0}
		U(s)=s^{-\nu}\left(I_\nu(h_\nu) J_\nu\left(h_\nu\frac{s}{\tilde R}\right)-J_\nu(h_\nu)I_\nu\left(h_\nu\frac{s}{\tilde R}\right)\right),\ s\in(0,\tilde R].
	\end{equation}
		
	Therefore, once we have the property \eqref{reduction}, it follows by \eqref{Lieb-modified-final} that 
		\begin{eqnarray}\label{estimate-last}
			\Lambda_{\sf m}(\Omega)\geq {\sf AVR}_ {\sf m}^\frac{4}{N}\mathcal J^{\tilde R,0}_N=
			{\sf AVR}_ {\sf m}^\frac{4}{N} \frac{h_\nu^4}{\tilde R^{4}}=
			{\sf AVR}_ {\sf m}^\frac{4}{N} h_\nu^4\left(\frac{\omega_N}{{\sf m}(\tilde \Omega)}\right)^\frac{4}{N}\geq {\sf AVR}_ {\sf m}^\frac{4}{N} h_\nu^4\left(\frac{\omega_N}{{\sf m}(\Omega)}\right)^\frac{4}{N},
		\end{eqnarray}
		which is precisely the required inequality \eqref{inequality}. 
		
		\section{Reducing the coupled minimization: proof of Theorem \ref{theorem-main}}

		The argument in this section is closely related to the one by Ashbaugh and Benguria \cite{A-B-Duke}, where a careful analysis of Bessel functions is performed. In fact, we want to determine a wider range of not necessarily integers $N>1$  where the aforementioned reduction \eqref{reduction} still works in the 1-dimensional model space $([0,\infty),{\sf d}_{\rm eu},\sigma_N)$. Although we have numerical evidences for the validity of \eqref{reduction} for every $N\in (1,N_0)$ with $N_0\approx 3.30417$ (see the argument in \eqref{N-0-def}), we face  serious technical difficulties by handling peculiar properties of Bessel functions. However, fine estimates show  the validity of \eqref{reduction} when $N$ is  close enough to the integer dimensions $2$ and $3$. In the sequel, we shall focus on this aspect;    for the reader's convenience   we recall the main steps  from \cite{A-B-Duke} adapted to our 1-dimensional model space. 
		
	\subsection{Ashbaugh--Benguria's argument in the 1-dimensional model space}	By \eqref{Lieb-modified-final}, we have following variational problem  
		$$\mathcal J^{a,b}_N=\min_{U,W} \frac{\displaystyle\int_{0}^a (\Delta_{0,N}U)^2{\rm d}\sigma_N+ \int_{0}^b (\Delta_{0,N}W)^2{\rm d}\sigma_N }{\displaystyle\int_{0}^aU^2 {\rm d}\sigma_N+\int_{0}^b W^2 {\rm d}\sigma_N}=:h^4>0$$ for some $h=h_N(a)>0,$
		in the 1-dimensional model space $([0,\infty),{\sf d}_{\rm eu},\sigma_N)$, where $a^N+b^N=\tilde R^N$ with $\omega_N\tilde R^N={\sf m}(\tilde \Omega)\leq {\sf m}(\Omega)$, and the infimum is considered for functions 
		$U:[0,a]\to [0,\infty)$ and $W:[0,b]\to [0,\infty)$  subject to the boundary 
		conditions \eqref{BCs}. 
		By the previous variational problem, the Euler--Lagrange equation provides the  system of ordinary differential equations
		$$\ds\left\{ \begin{array}{ccc}
			\Delta_{0,N}^2 U=h^4 U & {\rm in} & [0,a]; \\
			\Delta_{0,N}^2 W=h^4 W & {\rm in} &  [0,b],
		\end{array}\right.$$
		subject to the boundary conditions \eqref{BCs}.  As before, one has that
		$$U(s)=s^{-\nu}(A_1J_\nu(hs)+A_2I_\nu(hs)),\ s\in(0,a],$$
		$$W(s)=s^{-\nu}(B_1J_\nu(hs)+B_2I_\nu(hs)),\ s\in(0,b],$$
		for some constant $A_i,B_i\in \mathbb R$, $i\in \{1,2\}.$ According to the boundary conditions \eqref{BCs} and basic recurrence relations for Bessel functions, we have that 
		$$\ds\left\{ \begin{array}{lll}
			A_1J_\nu(ha)+A_2I_\nu(ha)=0; \\
			B_1J_\nu(hb)+B_2I_\nu(hb)=0;\\
			(-A_1J_{\nu+1}(ha)+A_2I_{\nu+1}(ha))a^{\nu+1}=(-B_1J_{\nu+1}(hb)+B_2I_{\nu+1}(hb))b^{\nu+1};\\
			(-A_1J_\nu(ha)+A_2I_\nu(ha))a^{-\nu}+(-B_1J_\nu(hb)+B_2I_\nu(hb))b^{-\nu}=0.
		\end{array}\right.
		$$
		In order not to have a trivial solution to the latter system in $A_i,B_i\in \mathbb R$, $i\in \{1,2\}$, it is necessary that
		\begin{equation}\label{determinant}
			{\rm det}\left[\begin{matrix}
				J_\nu(ha) & I_\nu(ha) & 0 & 0\\
				0 & 0 &J_\nu(hb) & I_\nu(hb)\\
				-J_{\nu+1}(ha)a^{\nu+1} & I_{\nu+1}(ha)a^{\nu+1} &J_{\nu+1}(hb)b^{\nu+1} &-I_{\nu+1}(hb)b^{\nu+1}\\
				-J_{\nu}(ha)a^{-\nu} & I_{\nu}(ha)a^{-\nu} &-J_{\nu}(hb)b^{-\nu} &I_{\nu}(hb)b^{-\nu}
			\end{matrix}\right]=0.
		\end{equation}
		If we introduce the notation 
		\begin{equation}\label{determinant-2}
			\mathcal K_\nu(s)=s^{2\nu+1}\left(\frac{J_{\nu+1}}{J_\nu}(s)+\frac{I_{\nu+1}}{I_\nu}(s)\right),
		\end{equation}
		equation \eqref{determinant} can be written into its equivalent form 
		\begin{equation}\label{determinant-3}
			\mathcal K_\nu(ha)+\mathcal K_\nu(hb)=0.
		\end{equation}
		Without loss of generality, by using a usual scaling, we may assume that $\tilde R=1$, i.e., $a^N+b^N=1.$ Moreover, the latter equation being  symmetric in $a$ and $b$, we may also assume that $0<a\leq b<1.$ If $h=h_\nu(a)$
		denotes the lowest positive solution of \eqref{determinant-3}, 
		our purpose is to prove that 
		\begin{equation}\label{expected-ineq}
			h_\nu(a)> h_\nu,
		\end{equation}
		where $h_\nu>0$ corresponds to $h_\nu(0)$, which  is the first positive root of $K_\nu,$ being  the first positive root of the cross-product of the Bessel functions $J_\nu$ and $I_\nu$; indeed, the validity of  \eqref{expected-ineq} directly implies \eqref{reduction} even with strict inequality when $0<a\leq b<1,$ thus also the validity of \eqref{inequality}. 
		
		Due to basic properties of  $J_\nu$, the function $\mathcal K_\nu$ has simple poles, denoted by $j_{\nu,k}$, $k\in \mathbb N$. 
		We first observe that $\mathcal K_\nu$ is increasing between any two consecutive poles, see \cite{A-B-Duke}, 
		and according to the limiting properties 
		\begin{equation}\label{limiting-J-I}
			J_\nu(s)\sim \left(\frac{s}{2}\right)^\nu\frac{1}{\Gamma(\nu+1)},\ \ I_\nu(s)\sim \left(\frac{s}{2}\right)^\nu\frac{1}{\Gamma(\nu+1)}\ \ {\rm as}\ s\to 0,
		\end{equation}
		see relations (10.7.3) and (10.30.1) from \cite{Olver-etal}, we obtain that $K_\nu(0)=0.$ In particular, it follows that no positive zero exists in the interval $(0,j_{\nu,1}/b)$ for the function $h\mapsto \mathcal K_\nu(ha)+\mathcal K_\nu(hb)$. Moreover, due to the presence of the second term, it turns out that $h\mapsto \mathcal K_\nu(ha)+\mathcal K_\nu(hb)$ has its first pole in $j_{\nu,1}/b$. The second pole is either $j_{\nu,1}/a$ (coming from the first term) or $j_{\nu,2}/b$ (being the second pole of the second term). In conclusion, it turns out that the first positive zero $h_\nu(a)$ of $h\mapsto \mathcal K_\nu(ha)+\mathcal K_\nu(hb)$ is situated between $j_{\nu,1}/b$ and $\min\{j_{\nu,1}/a,j_{\nu,2}/b\}$. In the limiting case when $a$ and $b$ approach each other ($a\nearrow 2^{-1/N}$ and $b\searrow 2^{-1/N}$), the two poles come arbitrarily close to $h_\nu(2^{-1/N})$, thus $$h_\nu(2^{-1/N})=j_{\nu,1}/2^{-1/N}.$$
		Therefore, in order to have \eqref{expected-ineq}, we should have $$2^{1/N}j_{\nu,1}\geq h_\nu.$$
		Recalling that $\nu=\frac{N}{2}-1,$ we consider 
		\begin{equation}\label{N-0-def}
			N_0:=\sup\left\{N>1:2^{1/N}j_{\nu,1}> h_\nu\right\}.	
		\end{equation}
		Numerical arguments show that $N_0\approx 3.30417$. By \eqref{N-0-def}, it follows that the present argument does not work beyond the dimension $N_0.$ 
			
		First, if we fix $N\in (0,N_0)$, one has that $2^{1/N}j_{\nu,1}> h_\nu $. In particular, we have  $h_N(a)> h_\nu $, i.e., \eqref{expected-ineq} holds, whenever $h_\nu\leq j_{\nu,1}/b$ (the value $j_{\nu,1}/b$ being the first pole for $h\mapsto \mathcal K_\nu(ha)+\mathcal K_\nu(hb)$). Therefore, since $a^N+b^N=1$, it follows that \eqref{expected-ineq} holds for  $b\leq j_{\nu,1}/h_\nu$, which is equivalent to $a\in [(1-(j_{\nu,1}/h_\nu)^N)^{1/N},2^{-1/N}].$ 
		
		For the complement range of $a$, property \eqref{expected-ineq} also holds  once  we have that 
		\begin{equation}\label{ineq:K_a_b}
			\mathcal K_\nu(h_\nu a)+\mathcal K_\nu(h_\nu b)<0,\ \ \forall a\in (0,(1-(j_{\nu,1}/h_\nu)^N)^{1/N}).
		\end{equation}
		 Indeed, since $t\mapsto \mathcal K_\nu(t a)+\mathcal K_\nu(t b)$ is strictly increasing between consecutive poles, by \eqref{ineq:K_a_b} we would have $\mathcal K_\nu(h_\nu a)+\mathcal K_\nu(h_\nu b)<0=\mathcal K_\nu(h_\nu(a) a)+\mathcal K_\nu(h_\nu(a) b)$, thus $h_\nu<h_\nu(a)$, which is exactly the expected inequality \eqref{expected-ineq} for the remaining case $a\in (0,(1-(j_{\nu,1}/h_\nu)^N)^{1/N})$. 
		 
	\subsection{Concluding the proof of inequality \eqref{inequality}}	 In order to prove inequality \eqref{inequality}, it remains to study the validity of \eqref{ineq:K_a_b}, as the latter implies \eqref{expected-ineq}. We recall that this is known for $N\in \{2,3\},$ see \cite[Appendix 1]{A-B-Duke}.  In order to have \eqref{ineq:K_a_b} for a wider range of $N$, we proceed similarly as in \cite{A-B-Duke}; although it is expected the validity of \eqref{ineq:K_a_b} for every $N\in (1,N_0)$, there are serious technical obstructions as we see below. In present paper, we extend this property for dimensions $N$ close to 2 and 3, respectively. To do this,  by using Mittag--Leffler representations for the ratios  $J_{\nu+1}/J_\nu$ and $I_{\nu+1}/I_\nu$, we recall that the validity of \eqref{ineq:K_a_b} is guaranteed whenever we prove that 
		 \begin{equation}\label{A-B-nu}
		 A(\nu,a) + B(\nu,a)<0,\ \ \forall a\in (0,(1-(j_{\nu,1}/h_\nu)^N)^{1/N}),
		 \end{equation}
		  where 
		  \begin{equation}\label{A-nu}
		  	A(\nu,a)=\frac{h_\nu^4(ab)^{4-N}-j_{\nu,1}^4(a^{4-N}+b^{4-N})}{(j_{\nu,1}^4-(h_\nu a)^4)((h_\nu b)^4-j_{\nu,1}^4)},
		  \end{equation}
		and 
	 \begin{equation}\label{B-nu}
		B(\nu,a)=a^{4-N}\sum_{k\geq 2}\frac{1}{j_{\nu,k}^4-(h_\nu a)^4}+b^{4-N}\sum_{k\geq 2}\frac{1}{j_{\nu,k}^4-(h_\nu b)^4},
	\end{equation}
see \cite[rels.\ (45)\&(46)]{A-B-Duke}, with $a^N+b^N=1$. 

On the one hand, due to the fact that $\nu\mapsto j_{\nu,1}$ and $\nu\mapsto h_\nu$ are continuous functions, and $A$ is also continuous on $D=\left\{(\nu,a)\in \mathbb R^2: \nu\in (-\frac{1}{2},\frac{N_0}{2}-1),a\in [0,(1-(j_{\nu,1}/h_\nu)^N)^{1/N}]\right\}$,  an elementary argument shows that the function $\alpha:(-\frac{1}{2},\frac{N_0}{2}-1)\to \mathbb R$ given by 
$$\alpha(\nu):=\max \{A(\nu,a):a\in [0,(1-(j_{\nu,1}/h_\nu)^N)^{1/N}]\}$$
is upper-semicontinuous. 
On the other hand, if $\delta_\nu:=h_\nu^4/j_{\nu,2}^2$, then for every $k\geq 2$ and $0\leq a,b\leq 1,$ one has that $j_{\nu,k}^4-(h_\nu a)^4\geq j_{\nu,k}^4(1-\delta_\nu)$ and  $j_{\nu,k}^4-(h_\nu b)^4\geq j_{\nu,k}^4(1-\delta_\nu)$, respectively. Therefore, 
$$B(\nu,a)\leq \frac{a^{4-N}+b^{4-N}}{1-\delta_\nu}\sum_{k\geq 2}\frac{1}{j_{\nu,k}^4}.$$ 
By the Rayleigh sum $\sum_{k\geq 1}\frac{1}{j_{\nu,k}^4}=\frac{1}{16(\nu+1)^2(\nu+2)}$, see e.g.\ Watson \cite{Watson}, and the fact that $a^{4-N}+b^{4-N}\leq 2^{2-\frac{4}{N}}$ (since $a^N+b^N=1$), it follows that 
$$B(\nu,a)\leq \frac{2^{2-\frac{4}{N}}}{1-\delta_\nu}\left(\frac{1}{16(\nu+1)^2(\nu+2)}-\frac{1}{j_{\nu,1}^4}\right)\equiv \beta(\nu).$$
Clearly, $\beta$ is continuous in $(-\frac{1}{2},\frac{N_0}{2}-1)$. 		

	We recall by Ashbaugh and Benguria \cite[Appendix 1]{A-B-Duke} that $\alpha(0)+\beta(0)\leq -0.00158$ (corresponding to $N=2$) and 	$\alpha(\frac{1}{2})+\beta(\frac{1}{2})\leq -0.000417$ (corresponding to $N=3$); therefore, the upper-semicontinuity of $\alpha+\beta$ in $(-\frac{1}{2},\frac{N_0}{2}-1)$ implies the existence of $\varepsilon_0>0$ such that $\alpha(\nu)+\beta(\nu)<0$ for every $\nu\in \left(-\frac{\varepsilon_0}{2},\frac{\varepsilon_0}{2}\right)\cup \left(\frac{1-\varepsilon_0}{2},\frac{1+\varepsilon_0}{2}\right)$, which corresponds to $N\in (2-\varepsilon_0,2+\varepsilon_0)\cup (3-\varepsilon_0,3+\varepsilon_0)$. In particular, this concludes the proof of \eqref{A-B-nu}.  
	
	\begin{remark}\label{remark-dimension}\rm 
		As we already stated, numerical tests show the validity of \eqref{A-B-nu} for \textit{every} $N\in (1,N_0)$ with $N_0\approx 3.30417$, suggesting the possibility to extend  Theorem \ref{theorem-main} to the whole range $1<N<N_0.$
		However, the proof of this claim is  non-trivial, due to the presence of the roots of Bessel functions and their cross-product. In fact, once we are closer and closer to $N_0$, we need finer and finer \textit{explicit} estimates for  $j_{\nu,1}$, $j_{\nu,2}$ and $h_\nu$, respectively, which are not available in the literature (mainly, for $h_\nu$). We believe such a problem might  be interesting for experts working in the theory of special functions.  
	\end{remark}
		
		\section{Sharpness and rigidity in the main inequality \eqref{inequality}: proof of Theorem \ref{theorem-main-sharpness-rigidity}}		
		
		\subsection{Sharpness: proof of Theorem \ref{theorem-main-sharpness-rigidity}/(i)}\label{subsection-sharp}
		$\phantom{}$\smallskip
		
\noindent In order to prepare the proof,  we consider the function  
		\begin{equation}\label{f-0-def}
			f_0(s)=s^{-\nu}(AJ_\nu(s)+BI_\nu(s)),\ \ s\in [0,h_\nu],
		\end{equation}
		where $\nu=\frac{N}{2}-1$ and $B=-A\frac{J_\nu(h_\nu)}{I_\nu(h_\nu)}.$ In fact, at the origin, by using the limiting properties \eqref{limiting-J-I}, the function $f_0$ is extended by continuity as $f_0(0)=\frac{A+B}{2^\nu\Gamma(\nu+1)}$. 
		In addition, by construction, one has that $f_0(h_\nu)=f_0'(h_\nu)=0.$ In terms of the function $f_0$, one has the  following identity
		\begin{equation}\label{identity-bessel}
			\ds\int_0^1 \left[\frac{(N-1)^2}{h_\nu^2t^2}{f_0'}^2\left(h_\nu t\right)-2(f_0'f_0'')'\left(h_\nu t\right)+{f_0''}^2\left(h_\nu t\right)\right]t^{N-1}{\rm d}t = \int_0^1f_0^2\left(h_\nu t\right)t^{N-1}{\rm d}t,
		\end{equation}
		which can be checked either by direct computation or by the  fact that $f_0$ is an extremizer in the $1$-dimensional clamped plate  problem, i.e., in particular, 
		\begin{equation}\label{laplace-f-0}
				\frac{\displaystyle \int_{0}^{h_\nu} (\Delta_{0,N} f_0)^2{\rm d}\sigma_N }{\displaystyle\int_{0}^{h_\nu} f_0^2 {\rm d}\sigma_N}=
			1.
		\end{equation}
	We now divide the proof into two steps. 
		
		\textbf{Step 1}. {\it Equality in \eqref{inequality} is achieved for metric balls in metric measure cones.} 
	We assume that $X$ is isometric to an $N$-Euclidean metric measure cone and $\Omega$ is isometric to a metric ball $B_r(\overline x)$ for some $r>0$, where $\overline x$ is one of the tips of $X$. We claim that
	\begin{equation}\label{cone-equality}
	   \Lambda_{\sf m}\left(\Omega\right)={\sf AVR}_ {\sf m}^\frac{4}{N}h_\nu^4\left(\frac{\omega_N}{{\sf m}(\Omega)}\right)^\frac{4}{N}. 
	\end{equation}
Since $X$ is isometric to an $N$-Euclidean metric measure cone, having the tip $\overline x$, it turns out that \begin{equation}\label{cone-balls}
    {\sf m}(B_s(\overline x))={\sf AVR}_ {\sf m}\omega_N s^{N}\ \ {\rm  and}\ \ {\sf m}^+(B_s(\overline x))={\sf AVR}_ {\sf m}N\omega_N s^{N-1},\ \ {\rm for\ every}\ s\in [0,r],
\end{equation}   and
	$$\Delta \rho =\frac{N-1}{\rho}\ \ {\rm on}\ \ X\setminus \{\overline x\},$$  where $\rho={\sf d}(\overline x,\cdot)$, see e.g.\ De Philippis and Gigli \cite[Proposition 3.7]{DeP-G}. In particular, by \eqref{cone-balls} one has that $r={\sf AVR}_ {\sf m}^{-{1}/{N}}R,$ where ${\sf m}(\Omega)=\omega_NR^N.$

	To check \eqref{cone-equality}, let $u:B_{{\sf AVR}_ {\sf m}^{-{1}/{N}}R}(\overline x)\to \mathbb R$ be the function given by 
 \begin{equation}\label{extremal-candidate}
    u(x)=U\left({\sf AVR}_ {\sf m}^{\frac{1}{N}}{\sf d}(\overline x,x)\right),\ \  x\in B_{{\sf AVR}_ {\sf m}^{-{1}/{N}}R}(\overline x)\simeq \Omega,  
 \end{equation}
 where $U$ is defined by
	\begin{equation}\label{U-definition}
		U(s)=s^{-\nu}\left(I_\nu(h_\nu) J_\nu\left(h_\nu\frac{s}{ R}\right)-J_\nu(h_\nu)I_\nu\left(h_\nu\frac{s}{ R}\right)\right),\ s\in(0, R].
\end{equation}
	  Let us observe that $u\in H_0^{2,2}(\Omega)$ and
	   $$  u(x)=c_0f_0(C{\sf d}(\overline x,x)),\ \ x\in B_{{\sf AVR}_ {\sf m}^{-{1}/{N}}R}(\overline x),
	$$
where $f_0$ is from \eqref{f-0-def}, while 
	$c_0={I_\nu(h_\nu)}\left(\frac{h_\nu}{R}\right)^\nu$ and $C={\sf AVR}_ {\sf m}^{\frac{1}{N}}\frac{h_\nu}{R}$.

	By using these properties, it follows that
	\begin{eqnarray*}
		\Lambda_{\sf m}\left(B_{{\sf AVR}_ {\sf m}^{-{1}/{N}}R}(\overline x)\right)&\leq& \frac{\displaystyle\int_{B_{{\sf AVR}_ {\sf m}^{-{1}/{N}}R}(\overline x)} (\Delta u)^2 {\rm d}{\sf m}}{\displaystyle\int_{B_{{\sf AVR}_ {\sf m}^{-{1}/{N}}R}(\overline x)} u^2 {\rm d}{\sf m}}=C^4	\frac{\displaystyle \int_{0}^{h_\nu} (\Delta_{0,N} f_0)^2{\rm d}\sigma_N }{\displaystyle\int_{0}^{h_\nu} f_0^2 {\rm d}\sigma_N}\\&=&C^4={\sf AVR}_ {\sf m}^\frac{4}{N}h_\nu^4\left(\frac{\omega_N}{{\sf m}(B_{{\sf AVR}_ {\sf m}^{-{1}/{N}}R}(\overline x))}\right)^\frac{4}{N}\qquad \   \qquad \qquad  ({\rm see}\  \eqref{laplace-f-0}) \\&\leq&\Lambda_{\sf m}\left(B_{{\sf AVR}_ {\sf m}^{-{1}/{N}}R}(\overline x)\right), \qquad \qquad \qquad \qquad \qquad \qquad \qquad ({\rm see}\  \eqref{inequality})
	\end{eqnarray*}
 which concludes the proof of \eqref{cone-equality}.  
		\smallskip
		
		\textbf{Step 2}.   {\it Sharpness of \eqref{inequality} for ${\sf RCD}(0,N)$ spaces with subharmonic distance function.}

		By contradiction,  assume there is  $C>{\sf AVR}_ {\sf m}^\frac{4}{N}$ such that for every  open bounded domain $\Omega\subset X$:
		\begin{equation}\label{inequality-0}
			\Lambda_{\sf m}(\Omega)\geq  C\,h_\nu^4\left(\frac{\omega_N}{{\sf m}(\Omega)}\right)^\frac{4}{N}.
		\end{equation}

		Let $x_0\in X$ from the statement of the theorem,  the set $\Omega:=B_L(x_0)$ for some $L>0$ and  the function $u_L:B_L(x_0)\to \mathbb R$ defined  by
		$$u_L(x)=f_0\left(h_\nu\frac{\rho(x)}{L}\right),\ \ x\in B_L(x_0),$$
		where $\rho(x):={\sf d}(x_0,x)$ and $f_0$ is from \eqref{f-0-def}. By the smoothness of $J_\nu$ and $I_\nu$, it follows that $u_L\in \mathsf{LIP}(B_L(x_0))$. In fact, we also have that $u_L\in H_0^{2,2}(B_L(x_0),{\sf m})$, which follows from the subsequent argument. First, we have by the area formula \eqref{area-formlula} that
		$$\int_{B_L(x_0)}u_L^2\,{\rm d}{\sf m}=\int_{B_L(x_0)}f_0^2\left(h_\nu\frac{\rho}{L}\right){\rm d}{\sf m}=\int_0^Lf_0^2\left(h_\nu\frac{s}{L}\right){\sf m}^+(B_s(x_0)){\rm d}s;$$  
		thus, by \eqref{Bishop-Gromov-monoton} one obtains that $u_L\in L^2(B_L(x_0),{\sf m})$. Moreover, by the monotone convergence theorem and \eqref{AVR-difinition}, the latter relation implies that \begin{equation}\label{first-limit}
			\lim_{L\to \infty}\frac{\ds\int_{B_L(x_0)}u_L^2\,{\rm d}{\sf m}}{L^N}	=\lim_{L\to \infty}\frac{\ds\int_0^1f_0^2\left(h_\nu t\right){\sf m}^+(B_{Lt}(x_0)){\rm d}t}{L^{N-1}}={\sf AVR}_ {\sf m}N\omega_N\int_0^1f_0^2\left(h_\nu t\right)t^{N-1}{\rm d}t.
		\end{equation}
		
		On the other hand, according to Gigli \cite[rel.\ (1.44)]{Gigli2}, one has ${\sf m}$-a.e.\ that
		$$\Delta u_L=\frac{h_\nu}{L}f_0'\left(h_\nu\frac{\rho}{L}\right)\Delta \rho+\frac{h_\nu^2}{L^2}f_0''\left(h_\nu\frac{\rho}{L}\right)|\nabla \rho|^2=\frac{h_\nu}{L}f_0'\left(h_\nu\frac{\rho}{L}\right)\Delta \rho+\frac{h_\nu^2}{L^2}f_0''\left(h_\nu\frac{\rho}{L}\right),$$ 
		where we used the eikonal equation \eqref{eikonal}. 
		 Therefore, we have
		\begin{eqnarray*}
			\int_{B_L(x_0)}(\Delta u_L)^2{\rm d}{\sf m}= I + II +III,
		\end{eqnarray*}
		where
		$$I:=\frac{h_\nu^2}{L^2}\int_{B_L(x_0)}{f_0'}^2\left(h_\nu\frac{\rho}{L}\right)(\Delta \rho)^2{\rm d}{\sf m},\ \ II:=2\frac{h_\nu^3}{L^3}\int_{B_L(x_0)}(f_0'f_0'')\left(h_\nu\frac{\rho}{L}\right)\Delta \rho\,{\rm d}{\sf m}$$
		and 
		$$III:=\frac{h_\nu^4}{L^4}\int_{B_L(x_0)}{f_0''}^2\left(h_\nu\frac{\rho}{L}\right){\rm d}{\sf m}.$$
		Since $(X,{\sf d},{\sf m})$ is an ${\sf RCD}(0,N)$ space, the Laplace comparison shows that $$\Delta\rho \leq \frac{N-1}{\rho}\ \  {\rm on}\ \  X\setminus \{x_0\},$$ see Gigli \cite[Corollary 5.15]{Gigli1}. By assumption,    $\rho$ is subharmonic, i.e., $\Delta\rho\geq 0$; thus, combining these facts with the co-area formula \eqref{area-formlula}, it follows that 
		$$I\leq \frac{(N-1)^2h_\nu^2}{L^2}\int_{B_L(x_0)}{f_0'}^2\left(h_\nu\frac{\rho}{L}\right)\frac{1}{\rho^2}{\rm d}{\sf m}=\frac{(N-1)^2h_\nu^2}{L^2}\int_0^L{f_0'}^2\left(h_\nu \frac{s}{L}\right)\frac{1}{s^2}{\sf m}^+(B_s(x_0)){\rm d}s.$$
		Note that the latter integral is finite, since $$\frac{f_0'(s)}{s}=s^{-\nu-1}(-AJ_{\nu+1}(s)+BI_{\nu+1}(s))\sim \frac{-A+B}{2^{\nu+1}\Gamma(\nu+2)}\ \ {\rm as}\ s\to 0.$$
		Due to the fact that $f_0'(h_\nu)=0$, we infer that 
		\begin{eqnarray*}
			II&=&2\frac{h_\nu^3}{L^3}\int_{B_L(x_0)}(f_0'f_0'')\left(h_\nu\frac{\rho}{L}\right)\Delta \rho\,{\rm d}{\sf m}=-2\frac{h_\nu^4}{L^4}\int_{B_L(x_0)}(f_0'f_0'')'\left(h_\nu\frac{\rho}{L}\right)|\nabla \rho|^2\,{\rm d}{\sf m}\\&=&-2\frac{h_\nu^4}{L^4}\int_{B_L(x_0)}(f_0'f_0'')'\left(h_\nu\frac{\rho}{L}\right){\rm d}{\sf m}=-2\frac{h_\nu^4}{L^4}\int_0^L(f_0'f_0'')'\left(h_\nu\frac{s}{L}\right){\sf m}^+(B_s(x_0)){\rm d}s.
		\end{eqnarray*}
		Collecting these computations and changing variables, it turns out that
		\begin{eqnarray*}
			\int_{B_L(x_0)}(\Delta u_L)^2{\rm d}{\sf m} &\leq& L \ds\int_0^1 \left[\frac{(N-1)^2h_\nu^2}{L^4}{f_0'}^2\left(h_\nu t\right)\frac{1}{t^2}-2\frac{h_\nu^4}{L^4}(f_0'f_0'')'\left(h_\nu t\right)\right.\\&& \qquad\quad  +\left.\frac{h_\nu^4}{L^4}{f_0''}^2\left(h_\nu t\right)\right]{\sf m}^+(B_{Lt}(x_0)){\rm d}t.  
		\end{eqnarray*}
		Thus, by the monotone convergence theorem, we get that
		$$\lim_{L\to \infty}\frac{\ds\int_{B_L(x_0)}(\Delta u_L)^2{\rm d}{\sf m}}{L^{N-4}}\leq {\sf AVR}_ {\sf m}	N\omega_N {h_\nu^4}\ds\int_0^1 \left[\frac{(N-1)^2}{h_\nu^2t^2}{f_0'}^2\left(h_\nu t\right)-2(f_0'f_0'')'\left(h_\nu t\right)+{f_0''}^2\left(h_\nu t\right)\right]t^{N-1}{\rm d}t.$$
		Combining the latter estimate with relations \eqref{inequality-0}, \eqref{identity-bessel}  and \eqref{first-limit},  we obtain that
		\begin{eqnarray*}
			C&\leq&h_\nu^{-4} \lim_{L\to \infty} \Lambda_{\sf m}(B_L(x_0))\left(\frac{{\sf m}(B_L(x_0))}{\omega_N}\right)^\frac{4}{N}\\&\leq &
			h_\nu^{-4}\lim_{L\to \infty}  \frac{\ds\int_{B_L(x_0)}(\Delta u_L)^2{\rm d}{\sf m}}{L^{N-4}}\frac{L^N}{\ds\int_{B_L(x_0)}u_L^2\,{\rm d}{\sf m}}\left(\frac{{\sf m}(B_L(x_0))}{\omega_NL^N}\right)^\frac{4}{N}\\&\leq &{\sf AVR}_ {\sf m}^\frac{4}{N},
		\end{eqnarray*}
		which contradicts our initial assumption 
		$C>{\sf AVR}_ {\sf m}^\frac{4}{N}$. \hfill $\square$
		
		\subsection{Rigidity:  proof of  Theorem \ref{theorem-main-sharpness-rigidity}/(ii)} We divide the proof into several steps.  
		\smallskip

  	\textbf{Step 1}. \textit{If equality is attained in \eqref{inequality} then $X$ is an $N$-Euclidean cone, $\Omega=B_r(\bar x)$ is a metric ball centered at a tip $\bar{x}$ of the cone structure of $X$, and  $u>0$ on $\Omega$, up to a multiplicative constant; moreover, $r={\sf AVR}_ {\sf m}^{-{1}/{N}}R,$ where ${\sf m}(\Omega)=\omega_NR^N.$}
		
		We assume that an open bounded set $\Omega\subset X$ saturates inequality \eqref{inequality}, i.e., 
		\begin{equation}\label{equality}
			\Lambda_{\sf m}(\Omega)= {\sf AVR}_ {\sf m}^\frac{4}{N}h_\nu^4\left(\frac{\omega_N}{{\sf m}(\Omega)}\right)^\frac{4}{N}. 
		\end{equation}
	As before, let $u:\Omega\to \mathbb R$ be an eigenfunction for $\Lambda_{\sf m}(\Omega)$. In particular, due to	\eqref{equality} and \eqref{estimate-last}, it follows that ${\sf m}(\tilde \Omega)={\sf m}(\Omega)$, i.e., ${\sf m}(\{u=0\})=0$, thus $\tilde R=R$. Moreover,  according to 
	 \eqref{Lieb-modified-final} and \eqref{expected-ineq}, one cannot have $0<a, b<\tilde R=R$, where $\omega_N a^N={\sf m}(\Omega_+)$ and $\omega_N b^N={\sf m}(\Omega_-)$, see \eqref{a-b}.
	 In particular, either $a=0$ or $b=0$, which implies that the eigenfunction $u:\Omega\to \mathbb R$ is sign-preserving; for simplicity, we assume that  ${\sf m}(\Omega_-)=0$, i.e., $u>0$ ${\sf m}$-a.e.\ on $\Omega$, thus $a=R$ and $b=0$.  Since, by Proposition  \ref{prop-lipschitz}, $u$ is continuous on $\Omega$, it follows that 
  \begin{equation}\label{eq:Omegau>0}
  \Omega=\{u>0\}.
  \end{equation}
		A careful analysis of the proof of \eqref{inequality} shows that equality has to hold in the isoperimetric inequality \eqref{Area-Area-0}, i.e., 
		\begin{equation}\label{eq:AVRPerEq}
			{\rm Per}(\{u>t\})= N\omega_N^\frac{1}{N}{\sf AVR}_ {\sf m}^\frac{1}{N} {\sf m}(\{u>t\})^\frac{N-1}{N}  \quad \text{for}\ \mathcal{L}^1\text{-a.e.}\ t\in [0,\max u].
		\end{equation}
		According to the characterization of the equality case in \eqref{eqn-isoperimetric-2}, it follows that
		\begin{enumerate}
			\item[(i)] $X$ is isometric to an $N$-Euclidean   metric measure cone over an ${\sf RCD}(N-2,N-1)$ space;
			\item[(ii)] for $\mathcal L^1$-a.e.\ $t>0$, the super-level sets $\{u>t\}$ are metric balls  $ B_{r(t)}(x(t))$; moreover, $X$ is a cone with tip at $x(t)$.
		\end{enumerate}
		Actually, \eqref{eq:AVRPerEq} and the statement in (ii) can be improved to hold for \emph{every} $t\geq 0$. For $t\geq \max u$ the claim is trivial, so let us fix $t\in [0, \max u)$ and let $t_k\searrow t$ be such that  \eqref{eq:AVRPerEq} holds for each $t_k$. By monotone convergence, it is easily seen that ${\sf m}(\{u>t_k\})\nearrow {\sf m}(\{u>t\})$ and that the corresponding characteristic functions converge in $L^1(X, {\sf m})$.
		The lower semi-continuity of the perimeter implies 
		\begin{align*}
			{\rm Per}(\{u>t\})&\leq \liminf_{k\to \infty} {\rm Per}(\{u>t_k\})= \liminf_{k\to \infty} N\omega_N^\frac{1}{N}{\sf AVR}_ {\sf m}^\frac{1}{N} {\sf m}(\{u>t_k\})^\frac{N-1}{N} \\
			&=N\omega_N^\frac{1}{N}{\sf AVR}_ {\sf m}^\frac{1}{N} {\sf m}(\{u>t\})^\frac{N-1}{N},
		\end{align*}
		proving the claim, as the converse inequality always holds. 

From \eqref{eq:Omegau>0}, we obtain that $\Omega=\{u>0\}$ is a metric ball $B_r(\bar x)$ centred at a tip $\bar{x}$ for the cone structure of $X$; moreover, $r={\sf AVR}_ {\sf m}^{-{1}/{N}}R,$ where ${\sf m}(\Omega)=\omega_NR^N.$ 
\smallskip

\textbf{Step 2}.  \textit{If equality is attained in \eqref{inequality}, then the eigenvalue $\Lambda_ {\sf m}(\Omega)$ is simple, i.e., the corresponding eigenspace is $1$-dimensional.}

Let $u_1, u_2: \Omega\to \mathbb{R}$ be eigenfunctions relative to  $\Lambda_{\sf m}(\Omega)$. From step 1, up to a multiplicative constant, we can assume that both functions are positive in $\Omega$. In particular, both $$\int_\Omega u_1\, {\rm d}{\sf m}>0,  \int_\Omega u_2\, {\rm d}{\sf m}>0.$$ Thus there exists $c>0$ such that 
\begin{equation}\label{eq:intu1=u2}
\int_\Omega u_1\, {\rm d}{\sf m}= c\; \int_\Omega u_2\, {\rm d}{\sf m}.
\end{equation}
Define $\bar{u}:= u_1 - c u_2$. Since $\Delta$ is linear (here we used that ${\sf RCD}$ spaces are infinitesimally Hilbertian), it is easily seen that either $\bar{u}$ is an eigenfunction for $\Lambda_{\sf m}(\Omega)$, or it vanishes identically. Assume the former holds. Then, from step 1, we infer that $\bar u$ has a fixed sign and thus $\int_{\Omega} \bar u \, {\rm d}{\sf m}\neq 0$, contradicting  \eqref{eq:intu1=u2}.
We conclude that $\bar u \equiv 0$ on $\Omega$, and thus $u_1$ and $u_2$ are linearly dependent. 
\smallskip

		\textbf{Step 3}. {\it Explicit shape of the extremizer(s).}
  
  On the one hand, step 1 guarantees that $X$ is an $N$-Euclidean cone and $\Omega=B_r(\bar x)$ is a metric ball centered at a tip $\bar{x}$ of the cone structure of $X$, with $r={\sf AVR}_ {\sf m}^{-{1}/{N}}R,$ where ${\sf m}(\Omega)=\omega_NR^N.$ In this particular setting, it turns out by step 1 from \S\ref{subsection-sharp}  that the function $u$ defined in \eqref{extremal-candidate}   is an extremizer in \eqref{inequality}. On the other hand, by step 2, extremizers form a 1-dimensional eigenspace, which implies that, up to a multiplicative constant, every eigenfunction realizing equality in \eqref{inequality} has the form $$u(x)=U\left({\sf AVR}_ {\sf m}^{\frac{1}{N}}{\sf d}(\overline x,x)\right),\   x\in B_{{\sf AVR}_ {\sf m}^{-{1}/{N}}R}(\overline x)\simeq \Omega, $$ with $U$ as in  \eqref{U-definition}.   
	 \hfill $\square$

 \section{Stability in \eqref{inequality}: proof of Theorem \ref{theorem-main-stability}}
	
	The following lemma will be useful in the proof of Theorem \ref{theorem-main-stability}.
	
	\begin{lemma}[Upper semi-continuity of ${\sf AVR}$ under pmGH convergence] \label{lem:USC-AVR} Fix $N\in (1,\infty)$ and let $\left((X_j, {\sf d}_j, {\sf m}_j, \bar{x}_j)\right)_{j\in \mathbb{N}}$ be a sequence of pointed  ${\sf CD}(0,N)$ metric measure  spaces, converging in the pointed measured Gromov--Hausdorff sense to $(Y, {\sf d}_{Y}, {\sf m}_{Y}, y_{0})$. 
	
	Then  $(Y, {\sf d}_{Y}, {\sf m}_{Y})$ is a ${\sf CD}(0,N)$ metric measure  space and 
	\begin{equation}\label{eq:USC-AVR}
	{\sf AVR}_{{\sf m}_Y} \geq \limsup_{j\to \infty} {\sf AVR}_{{\sf m}_j}. 
	\end{equation}
	\end{lemma}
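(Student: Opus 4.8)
The plan is to use the Bishop--Gromov monotonicity, which is stable under pmGH convergence, together with the convergence of the measures of balls. First I would recall that the ${\sf CD}(0,N)$ condition is stable under pmGH convergence (by Lott--Villani--Sturm), so $(Y,{\sf d}_Y,{\sf m}_Y)$ is again ${\sf CD}(0,N)$; in particular the asymptotic volume ratio ${\sf AVR}_{{\sf m}_Y}$ is well defined by the Bishop--Gromov monotonicity \eqref{Bishop-Gromov-monoton}. The key structural fact to exploit is that, on each ${\sf CD}(0,N)$ space, the function $r\mapsto {\sf m}_j(B_r(\bar x_j))/(\omega_N r^N)$ is non-increasing and converges, as $r\to\infty$, to ${\sf AVR}_{{\sf m}_j}$; hence for every fixed $r>0$,
\begin{equation*}
{\sf AVR}_{{\sf m}_j}\leq \frac{{\sf m}_j(B_r(\bar x_j))}{\omega_N r^N}.
\end{equation*}

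Next I would pass to the limit in $j$. Under pmGH convergence, for all but countably many radii $r>0$ (those for which the limit sphere is ${\sf m}_Y$-null, which excludes at most a countable set) one has ${\sf m}_j(B_r(\bar x_j))\to {\sf m}_Y(B_r(y_0))$; for a general $r$ one at least has $\limsup_j {\sf m}_j(B_r(\bar x_j))\le {\sf m}_Y(\overline B_r(y_0))$ from weak convergence of measures, which also suffices. Therefore, for a suitable sequence $r\to\infty$,
\begin{equation*}
\limsup_{j\to\infty}{\sf AVR}_{{\sf m}_j}\leq \limsup_{j\to\infty}\frac{{\sf m}_j(B_r(\bar x_j))}{\omega_N r^N}=\frac{{\sf m}_Y(B_r(y_0))}{\omega_N r^N}.
\end{equation*}
Letting $r\to\infty$ along such a sequence and using the definition \eqref{AVR-difinition} of ${\sf AVR}_{{\sf m}_Y}$ on the limit space gives
\begin{equation*}
\limsup_{j\to\infty}{\sf AVR}_{{\sf m}_j}\leq \lim_{r\to\infty}\frac{{\sf m}_Y(B_r(y_0))}{\omega_N r^N}={\sf AVR}_{{\sf m}_Y},
\end{equation*}
which is \eqref{eq:USC-AVR}.

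The only genuinely delicate point is the interchange of the limits in $j$ and $r$, i.e. making sure one can choose radii $r\to\infty$ at which ${\sf m}_j(B_r(\bar x_j))\to{\sf m}_Y(B_r(y_0))$ (or at which the one-sided $\limsup$ bound via $\overline B_r(y_0)$ is an equality up to an ${\sf m}_Y$-null sphere), and that this does not clash with the monotone limit defining ${\sf AVR}$. This is handled by the standard fact that the set of ``bad'' radii for a fixed limit measure is at most countable, so a cofinal sequence of ``good'' radii exists; once this is granted, everything else is an elementary monotonicity-and-limit argument. I would also remark that no lower bound of the same type holds in general — ${\sf AVR}$ can strictly drop to $0$ in the limit — which is exactly why the statement is only an upper semicontinuity.
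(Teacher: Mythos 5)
Your proposal is correct and follows essentially the same route as the paper: bound ${\sf AVR}_{{\sf m}_j}$ by the volume ratio at a fixed radius via Bishop--Gromov monotonicity, pass to the limit in $j$ using convergence of ball volumes under pmGH convergence, then let $r\to\infty$. The only cosmetic difference is that the paper runs the argument with a fixed $\varepsilon$ and a single well-chosen radius $\bar R$ (citing that in the ${\sf CD}(0,N)$ setting ball volumes converge for \emph{every} radius, so your precaution about countably many bad radii is not actually needed), whereas you exchange the limits via a cofinal sequence of good radii; both are valid.
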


	\begin{proof} 
	The fact that the limit space $(Y, {\sf d}_{Y}, {\sf m}_{Y})$ is ${\sf CD}(0,N)$ follows from the stability of the ${\sf CD}(0,N)$ condition under pmGH convergence (see \cite[Theorem 29.25]{Villani}, after \cite{Sturm-2, LV}).    
	By the very definition \eqref{eq:defAVR} of ${\sf AVR}$, for every $\varepsilon>0$ there exists $\bar{R}>0$ such that
	\begin{equation}\label{eq:RAVRY}
	\frac{{\sf m}_Y(B_{\bar R}(y_0))}{\omega_N {\bar R}^N}\leq  {\sf AVR}_{{\sf m}_Y}  +\varepsilon.
	\end{equation}
	Combining the pmGH convergence with the ${\sf CD}(0,N)$ condition, it is readily checked (see for instance \cite[Lemma 4.13]{MV-CVPDE}) that 
	\begin{equation}\label{eq:convVolBallsAVR}
		{\sf m}_{j}(B_{\bar R}(\bar x_{j}))\to {\sf m}_{Y}(B_{\bar R}(y_{0})), \quad \text{as } j\to \infty.
		\end{equation}
	The combination of  \eqref{eq:RAVRY} with \eqref{eq:convVolBallsAVR} gives that there exists $j_0>0$ such that
	\begin{equation}\label{eq:mjleq}
		\frac{{\sf m}_{j}(B_{\bar R}(\bar x_{j}))}{ \omega_N {\bar R}^N }\leq  {\sf AVR}_{{\sf m}_Y} + 2 \varepsilon, \quad \text{for all } j\geq j_0.
		\end{equation}
	Since, by Bishop--Gromov theorem, the function $r\mapsto \frac{{\sf m}_{j}(B_r(\bar x_{j}))}{ \omega_N r^N }$ is monotone decreasing, 	the inequality \eqref{eq:mjleq} gives that
	$$
	{\sf AVR}_{{\sf m}_j}\leq   {\sf AVR}_{{\sf m}_Y} + 2 \varepsilon, \quad \text{for all } j\geq j_0.
	$$
	The claim \eqref{eq:USC-AVR} follows.
	\end{proof}

	\textbf{Proof of  Theorem \ref{theorem-main-stability}.}
 \smallskip

 \textbf{Step 1.} \textit{Stability for the shape of an almost optimal space.}
\\	Assume by contradiction that Theorem \ref{theorem-main-stability}/(i) does not hold. Then there exists $\eta_{0}>0$ and  a sequence $(X_{j}, {\sf d}_{j}, {\sf m}_{j})$ of ${\sf RCD}(0,N)$ spaces having asymptotic volume ratio equal to $\alpha>0$, admitting metric balls $B_{r_{j}}(\bar x_{j})$, with ${\sf m} (B_{r_j}(\bar x_j))=V$,  and functions $u_{j} \in H^{2,2}_{0} (B_{r}(\bar x_{j}))\setminus \{0\}$ such that 
	\begin{equation}\label{eq:ASSMdeltaj}
		\frac{\displaystyle\int_{B_{r_{j}}(\bar x_{j})} (\Delta u_{j})^2 {\rm d}{\sf m}_{j}}{\displaystyle\int_{B_{r_{j}}(\bar x_{j})} u_{j}^2 \, {\rm d}{\sf m}_{j}} \leq \alpha^\frac{4}{N} h_\nu^4 \left(\frac{\omega_N}{V} \right)^\frac{4}{N} + \delta_{j}, \quad \text{ with } \delta_{j}\searrow 0, 
		\end{equation}
		but 
		\begin{equation}\label{eq:Thesiseta0}
		 {\sf d}_{\rm pmGH}\left( (X_j, {\sf d}_{X_j}, {\sf m}_{X_j}, \bar x_j), (Y, {\sf d}_{Y}, {\sf m}_{Y}, y_{0})  \right)\geq \eta_{0},
		\end{equation}
		for any $N$-Euclidean metric measure cone $(Y, {\sf d}_{Y}, {\sf m}_{Y})$ over an ${\sf RCD}(N-2,N-1)$ space, where  $y_{0}\in Y$. Up to multiplying each $u_j$ by the normalizing constant $1/\|u_j\|_{L^2(X_j, {\sf m}_j)}$, we can assume that 
		\begin{equation}\label{eq:ujnormalized}
		\int_{B_{r_{j}}(\bar x_{j})} u_{j}^2 \, {\rm d}{\sf m}_{j} =1, \quad \text{for all }j\in \mathbb{N}.
		\end{equation}
		Notice that \eqref{eq:unablauL2}, which can be extended to any $H^{2,2}_{0}$ function by density, combined with \eqref{eq:ASSMdeltaj} and \eqref{eq:ujnormalized} gives that
		\begin{equation}\label{eq:ujBDDW12}
		\sup_{j\in \mathbb{N}} \int_{B_{r_{j}}(\bar x_{j})} \left( |\Delta u_j|^2 +  |\nabla u_{j}|^2 \right) \, {\rm d}{\sf m}_{j} <\infty. 	
		\end{equation}
		
		By Gromov pre-compactness theorem,  up to a subsequence,    $(X_{j}, {\sf d}_{j}, {\sf m}_{j}, \bar{x}_{j})$ converge in pmGH sense  to a pointed  metric measure space $(Y, {\sf d}_{Y}, {\sf m}_{Y}, y_{0})$. By the stability of the ${\sf RCD}(0,N)$ condition under pmGH convergence (see \cite{GMS}, after \cite{Sturm-2, LV,  Villani,  AGS2}) the limit space $(Y, {\sf d}_{Y}, {\sf m}_{Y}, y_{0})$ is ${\sf RCD}(0,N)$ as well.  Lemma \ref{lem:USC-AVR} yields that
		\begin{equation}\label{eq:USCAVR}
		{\sf AVR}_{{\sf m}_Y}\geq \alpha.
		\end{equation}
		
		The assumptions that ${\sf AVR}_{{\sf m}_j}=\alpha$,  ${\sf m}_j(B_1(\bar x_j))\leq v_0$,  ${\sf m}_j(B_{r_j}(\bar x_j))=V$ combined with Bishop-Gromov theorem yield a uniform bound on the radii $r_j$:
		\begin{equation*}
		\min\left\{1,\; \frac{V}{v_0} \right\} ^{1/N} \leq r_j \leq  \left( \frac{V}{\omega_N \, \alpha} \right)^{1/N}.
		\end{equation*}
		Thus, up to a subsequence,
		\begin{equation}\label{eq:RjtoR}
		r_j \to r, \quad \text{as } j\to \infty, \quad \min\left\{1,\; \frac{V}{v_0} \right\} ^{1/N} \leq r \leq  \left( \frac{V}{\omega_N \, \alpha} \right)^{1/N}.
		\end{equation}
		By \cite[Lemma 4.13]{MV-CVPDE}, the convergence \eqref{eq:RjtoR} implies that 
		\begin{equation}\label{eq:convVolBalls}
		{\sf m}_{Y}(B_{r}(y_{0}))=V.
		\end{equation}
		Arguing by density of $\mathsf{Test}_c(\Omega)$ in $H^{2,2}_0(B_{r_j}(\bar x_j))$  one can check that, for a function in  $H^{2,2}_0(B_{r_j}(\bar x_j))$, the local Laplacian on $B_{r_j}(\bar x_j)$ (obtained via integration by parts in $B_{r_j}(\bar x_j)$) coincides with the restriction of the ambient Laplacian  (obtained via integration by parts in $X_j$). Thus, recalling \eqref{eq:ujBDDW12},    we can apply   \cite[Theorem 4.4] {Ambrosio-Honda}  and obtain that there exists $u\in H^{2,2}_{0}(B_{r}(y_{0}))$ such that, up to a subsequence, $u_j\to u$ in the  $H^{1,2}$-strong sense, in particular
		\begin{equation}\label{eq:unormalized}
		\int_{B_{r}(y_0)} u^2 \, {\rm d}{\sf m}_{Y}= \lim_{j\to \infty} \int_{B_{r_{j}}(\bar x_{j})} u_{j}^2 \, {\rm d}{\sf m}_{j} \overset{\eqref{eq:ujnormalized}} {=}1,
		\end{equation}
		 and such that $\Delta u_{j}\rightharpoonup \Delta u$ in the  $L^2$-weak sense, giving
		 \begin{equation}\label{eq:LSCj}
			\int_{B_{r}(y_{0})} (\Delta u)^2 {\rm d}{\sf m}_{Y}  \leq \liminf_{j\to \infty}  \int_{B_{r_{j}}(\bar x_{j})} (\Delta u_{j})^2 {\rm d}{\sf m}_{j}.
				\end{equation}
				Combining  \eqref{eq:ASSMdeltaj},  \eqref{eq:USCAVR}, \eqref{eq:convVolBalls}, \eqref{eq:unormalized} and  \eqref{eq:LSCj}, we obtain that
				\begin{equation}\label{eq:limitleq}
			\frac{\displaystyle\int_{B_{r}(y_{0})} (\Delta u)^2 {\rm d}{\sf m}_{Y}}{\displaystyle\int_{B_{r}(y_{0})} u^2 {\rm d}{\sf m}_{Y} } \leq {\sf AVR}_{{\sf m}_{Y}}^\frac{4}{N} h_\nu^4 \left(\frac{\omega_N}{{\sf m}_{Y}(B_{r}(y_{0})} \right)^\frac{4}{N}.
							\end{equation}
		By the very definitions, \eqref{eq:limitleq} yields that $B_{r}(y_{0})\subset Y$ achieves the equality in  \eqref{inequality}. The rigidity proved in Theorem \ref{theorem-main-sharpness-rigidity}/(ii)  implies that $(Y, {\sf d}_{Y}, {\sf m}_{Y})$ is an $N$-Euclidean metric measure cone over an ${\sf RCD}(N-2,N-1)$ space, contradicting  \eqref{eq:Thesiseta0}. 
  
  The proof of \eqref{eq:r-Rleqeta} can be performed via an analogous argument by contradiction, after recalling the convergence of the radii $r_j\to r$ as in \eqref{eq:RjtoR} and the fact that the limit domain $B_{r}(y_{0})\subset Y$ must be a ball of radius $r=\alpha^{-1/N}R$, with  $R=(V/\omega_N)^{1/N}$, thanks to Theorem \ref{theorem-main-sharpness-rigidity}/(ii).
\smallskip

 \textbf{Step 2.} \textit{Stability for the shape of an almost optimal function.}
 \\ Assume by contradiction that Theorem \ref{theorem-main-stability}/(ii) does not hold. Then there exists $\eta_{0}>0$,  a sequence of ${\sf RCD}(0,N)$ spaces $(X_j, {\sf d}_j, {\sf m}_j)$ admitting metric balls $B_{r_{j}}(\bar x_{j})$, with ${\sf m}_j (B_{r_j}(\bar x_j))=V$,  and functions $u_{j} \in H^{2,2}_{0} (B_{r_j}(\bar x_{j}))\setminus \{0\}$ as in step 1,  such that \eqref{eq:ASSMdeltaj} holds and, for any  sequence $(c_j)_{j\in \mathbb{N}} \subset \mathbb{R}$, 
  \begin{equation}\label{eq:Thesiseta0function}
		 \frac{\|u_j- c_j \, \bar u_j\|_{H^{1,2}(X_j, {\sf d}_j, {\sf m}_j)}} {\|u_j\|_{L^2(X_j,  {\sf m}_j)}}\geq \eta_{0}, \quad \text{ for all } j\in \mathbb{N}, 
		\end{equation}
  where $u_j$ is extended to the value $0$ outside of $B_{r_j}(\bar x_{j})$, and $\bar u_j$ is defined by 
  \begin{equation*}
\bar u_j(x)=u^*\left(\alpha^{\frac{1}{N}}{\sf d}_j(\bar x_j,x)\right),\; \text{for all }x\in B_{\alpha^{-{1}/{N}}R}(\bar x_j), \quad \bar u(x)=0 \quad  \text{otherwise},
 \end{equation*}
 $u^*$ being given in \eqref{eq:defu*Intro} and  $R= (V / \omega_N)^{1/N}$. 
 \\ Up to multiplying each $u_j$ by the normalizing constant $1/\|u_j\|_{L^2(X_j, {\sf m}_j)}$, we can assume that \eqref{eq:ujnormalized} holds. Repeating verbatim step 1, we get that there exists $(Y, {\sf d}_{Y}, {\sf m}_{Y})$,  an $N$-Euclidean metric measure cone over an ${\sf RCD}(N-2,N-1)$ space, a metric ball $B_r(y_0)\subset Y$, and $u\in H^{2,2}_0(B_r(y_0))$  satisfying \eqref{eq:limitleq} and such that $u_j\to u$ in the $H^{1,2}$-strong sense.

 By the rigidity Theorem \ref{theorem-main-sharpness-rigidity}/(ii), we infer that there exists $c\in \mathbb{R}$ such that $u= c\,  \bar u$,  where
\begin{equation*}
\bar u(y)=  u^*\left(\alpha^{\frac{1}{N}}{\sf d}_Y(y_0,y)\right),\; \text{for all }y\in B_{\alpha^{-{1}/{N}}R}(y_0), \quad \bar u(y)=0 \quad  \text{otherwise}.
 \end{equation*}
 Thanks to Nobili and Violo \cite[Lemma 7.2]{NV-Adv}, we know that $\bar u_j\to \bar u$ in the $L^{2}$-strong sense and thus 
 \begin{equation}\label{eq:ujtocbarujL2}
 \| u_j- c\, \bar u_j\|_{L^2(X_j, {\sf m}_j)} \to 0, \quad \text{ as } j\to \infty.
 \end{equation}
 We finally estimate the gradient norms. From the chain rule for weak gradients and the eikonal equation \eqref{eikonal}, i.e.,  $|\nabla {\sf d}_Y(y_0, \cdot)|=1,$ ${\sf m}_Y$-a.e., we get that 
 $|\nabla \bar u|= \alpha^{\frac{1}{N}} |(u^*)'| \circ (\alpha^{\frac{1}{N}}{\sf d}_Y(y_0, \cdot))$ on $B_{\alpha^{-{1}/{N}}R}(y_0)$ and  equal to $0$ elsewhere. Analogously, we have that  $|\nabla \bar u_j|= \alpha^{\frac{1}{N}} |(u^*)'| \circ (\alpha^{\frac{1}{N}}{\sf d}_j(\bar{x}_j, \cdot))$ on $B_{\alpha^{-{1}/{N}}R}(\bar{x}_j)$ and  equal to $0$ elsewhere.
Applying again Nobili and Violo \cite[Lemma 7.2]{NV-Adv}, we obtain that $|\nabla \bar u_j|\to |\nabla \bar u|$ in the $L^{2}$-strong sense. This means that the convergence of $ \bar u_j$ to $\bar u$ is $H^{1,2}$-strong. Moreover, also the convergence of $u_j$ to $u=c\, \bar u$ is $H^{1,2}$-strong. It follows that
  \begin{equation}\label{eq:ujtocbarujW12}
 \| u_j- c\, \bar u_j\|_{H^{1,2}(X_j, {\sf d}_j,  {\sf m}_j)} \to 0, \quad \text{ as } j\to \infty,
 \end{equation}
 contradicting \eqref{eq:Thesiseta0function}.
  \hfill$\Box$

	\section{Examples} 	
	
	In this section we provide some examples which support the applicability of our results. We start with an example from the smooth setting. 
	
	\begin{example}\rm Let $n\in \{2,3\}$ and $f:[0,\infty)\to [0,1]$ be a smooth non-increasing function with $f(0)=1$ and $\lim_{s\to  \infty}f(s)=a\in (0,1].$ The rotationally invariant metric on $\mathbb R^n$ is given by 
		\begin{equation}\label{g-metric}
					g={\rm d}r^2+F^2(r){\rm d}\theta^2,
		\end{equation}
	where $F(r)=\ds\int_0^r f(s){\rm d}s$; here, ${\rm d}\theta^2$ stands for the standard metric on the unit $(n-1)$-dimensional sphere $\mathbb S^{n-1}\subset \mathbb R^n$. According to Carron \cite{Carron}, the Ricci curvature of the Riemannian manifold $(\mathbb R^n,g)$ is non-negative. By using twice the monotone L'H\^ospital rule -- as an alternative proof of Bishop--Gromov comparison principle -- one has that the function 
	$$r\mapsto \frac{\ds\int_0^r F^{n-1}(s){\rm d}s}{r^n}$$ is non-increasing on $[0,\infty)$; in particular, due to Balogh and Krist\'aly \cite{BK} one has that 
	$${\sf AVR}_{{\sf m}_g}=\lim_{R\to \infty}\frac{n\ds\int_0^R F^{n-1}(s){\rm d}s}{R^n}=a^{n-1}>0.$$ Accordingly, Theorem \ref{theorem-main} can be applied, obtaining the main inequality  \eqref{inequality}. 
	
	Moreover, if $\rho(x)={\sf d}_g(0,x)$, where ${\sf d}_g$ is the metric function inherited from the Riemannian manifold $(\mathbb R^n,g)$, it turns out that the Laplace--Beltrami operator for $\rho$ is 
	$$\Delta_g \rho=(n-1)\frac{f(\rho)}{F(\rho)}\geq 0.$$
	Therefore, one can apply  Theorems \ref{theorem-main-sharpness-rigidity} and \ref{theorem-main-stability}. 
	In particular, if equality holds in  \eqref{inequality} for some open bounded set $\Omega\subset \mathbb R^n$,   then we necessarily have that
	$${\sf AVR}_{{\sf m}_g}=\lim_{R\to 0}\frac{n\ds\int_0^R F^{n-1}(s){\rm d}s}{R^n}=1,$$
	i.e.,   $a=1$, which implies that $f\equiv 1$ and the metric $g$ from \eqref{g-metric} turns out to be the canonical metric in $\mathbb R^n$, $n\in \{2,3\}$, which is precisely the setting of Ashbaugh and  Benguria \cite{A-B-Duke}.  
	\end{example}
	
	We now present an example on cones in Euclidean spaces. 
	
	\begin{example}\rm Let $n\geq 1$ be an integer,   $\Sigma\subseteq \mathbb R^n$ be an open convex cone with vertex at the origin and let $w$ be a continuous function in $\overline \Sigma$, positive in $\Sigma$, and  positively homogeneous of degree $\alpha\geq 0$ such that the function $w^{1/\alpha}$ is concave in $\Sigma$ when $\alpha>0$ (if $\alpha=0$, we consider $w$ to be constant). If ${\sf m}=w \mathcal L^n$, then the above concavity property implies that $(\Sigma, {\sf d}_{\rm eu},{\sf m})$ is a ${\sf CD}(0,N)$ space with $N=n+\alpha$, see Villani \cite{Villani} and Cabr\'e, Ros-Oton and Serra \cite[Remark 1.4]{CRS}; in fact, a standard argument also shows that $(\Sigma, {\sf d}_{\rm eu},{\sf m})$ is an	${\sf RCD}(0,N)$ space. Various examples for cones $\Sigma$ and functions $w$ which verify the above properties can be found in Cabr\'e, Ros-Oton and Serra \cite[\S 2]{CRS} and Balogh, Guti\'errez and Krist\'aly \cite[\S 4]{BGK}.

		By the homogeneity of $w$, one has that
		$${\sf AVR}_{\sf m}=\frac{\ds\int_{B^\Sigma_1(0)}w{\rm d}\mathcal L^n }{\omega_{N}}>0,$$
		where $B^\Sigma_1(0)$ is the unit ball in $\Sigma$ with center at $0\in \overline \Sigma$. If $N=n+\alpha$ is close enough to 2 or 3 in the sense of  Theorem \ref{theorem-main}, we have the  inequality  \eqref{inequality}. 
		
		Moreover, since $\Delta u=w^{-1}{\rm div}(w \nabla u)$ on $\Sigma$, one has for $\rho(x)=|x|={\sf d}_{\rm eu}(0,x)$ that $\Delta\rho=\frac{N-1}{\rho}\geq 0$ on $\Sigma$; thus, we may apply the sharpness and rigidity results from Theorem \ref{theorem-main-sharpness-rigidity}.  
	\end{example}
		
		Another example is provided by  Euclidean cones in the sense of Bacher and
		Sturm \cite{Bacher-Sturm} and Ketterer \cite{Ketterer}, which already appeared in the characterization of the equality case in the isoperimetric  inequality \eqref{eqn-isoperimetric-2}, see \S \ref{subsection-2-3}.

	\begin{example}\rm Let $N>1$ and consider a compact metric measure space $(Z,{\sf d}_Z,{\sf m}_Z)$ with diameter not greater than $\pi,$ verifying the 
	${\sf RCD}(N-2,N-1)$ condition. Let $(C(Z),{\sf d}_c,{\sf m}_c)$  be the metric measure cone over $Z$, where $C(Z)=Z \times [0,\infty)/(Z \times \{0\})$, ${\sf m}_c=t^{N-1}{\rm d}t\otimes {\sf m}_Z$ and ${\sf d}_c$ is the cone metric given by $${\sf d}_c((x,s),(y,t))=\sqrt{s^2+t^2-2st\cos{\sf d}_Z(x,y)},\ \ (x,s),(y,t)\in C(z).$$
		One can prove that $(C(Z),{\sf d}_c,{\sf m}_c)$ verifies the 
		${\sf RCD}(0,N)$ condition and $${\sf AVR}_{{\sf m}_c}=\frac{{\sf m}_Z(Z)}{N\omega_N}>0.$$
 Moreover, $\Delta\rho=\frac{N-1}{\rho}\geq 0$ on $C(Z)$, where $\rho={\sf d}_c(\cdot,(\overline y,\overline t))$, the point $(\overline y,\overline t)$ being a tip of $C(Z)$. Therefore, our results can be applied for the required range of $N$.

	\end{example}

\end{document}